\newtheorem{theorem}{Theorem}[section]
\newtheorem{lemma}[theorem]{Lemma}
\newtheorem{proposition}[theorem]{Proposition}
\newtheorem{corollary}[theorem]{Corollary}
\theoremstyle{definition}
\newtheorem{definition}[theorem]{Definition}
\newtheorem{example}[theorem]{Example}
\newtheorem{remark}[theorem]{Remark}
\numberwithin{equation}{section}
\newcommand{\dd}{\displaystyle}
\begin{document}
\setcounter{page}{1}

\title[Some results]{Some results on higher order isosymmetries in Semi-Hilbertian Spaces}

\author[Rchid RABAOUI]{Rchid RABAOUI}

\address{Department of Mathematics, Faculty of Sciences. Gabes 6072. Tunisia \\
University of Tunis El Manar, Faculty of Sciences of Tunis\\
Mathematical Analysis and Applications Laboratory LR11ES11\\
2092, El Manar I, Tunis, Tunisia.}
\email{rchid.rabaoui@fsg.rnu.tn}

\keywords{: Semi-Hilbert space, Isosymmetric operators, $(A;(m, n))$-isosymmetric operators, $(A,m)$-isometric operators, $(A,m)$-symmetric operators, spectrum, nilpotent perturbation.}
\subjclass[2010]{ 47A55, 47B25, 47B37, 47B65, 46C05}

\begin{abstract}
In this paper, we introduce the class of $(A,(m,n))$-isosymmetric operators and we study some of their properties, for a positive semi-definite operator $A$ and $ m,n\in\mathbb{ N}$, which extend, by changing the initial inner product with the semi-inner product induced by $A$, the well-known class of $(m,n)$-isosymmetric operators introduced by Mark Stankus (\cite{mark1}, \cite{mark}). In particular, we characterize a family of $A$-isosymmetric $(2\times2)$ upper triangular operator matrices. Moreover, we show that that if $T$ is $(A,(m,n))$-isosymmetric and if $Q$ is a nilpotent operator of order $r$ doubly commuting with $T$, then $T^p$ is $(A,(m,n))$-isosymmetric symmetric for any $p\in \mathbb{N}$ and $(T +Q)$ is $\big(A,(m+2r -2, n+2r -1)\big)$-isosymmetric. Some properties of the spectrum are also investigated.
\end{abstract} \maketitle

\section{Introduction and preliminaries}
Let $\mathcal{B}(H)$ be the algebra of all bounded linear operators on a separable complex
Hilbert space $H$ with the inner product $\langle \cdot\;|\;\cdot\rangle$. For $T\in \mathcal{B}(H)$, we write $\mathcal{R}(T)$, $\mathcal{N}(T)$, $\sigma(T)$, $\sigma_p(T)$ and $\sigma_{ap}(T)$ for the range, the null space, the spectrum, the point spectrum and the approximate point spectrum of $T$, respectively. A nonzero positif operator $A$ defines a semi-norm $\|u\|_{A}:=\langle Au\;|\;u\rangle^{\frac{1}{2}}.$  Observe that $\parallel\cdot\parallel_{A}$ is a norm if and only if $A$ is injective. For $A\in \mathcal{B}(H)^+$ and $T \in \mathcal{B}(H)$, we consider the following symbols (for more details see again \cite{a.2} and \cite{Le})
\begin{eqnarray*}
\big(\big\{(yx-1)^m\big\}_\mathbf{a}\big)(T,A)
&:=&\bigg(\sum_{k=0}^{m}(-1)^{m-k} \;{m\choose k}\;y^k \mathbf{a} x^{k}\bigg)(T,A),\\
\big(\big\{(x-y)^n\big\}_\mathbf{a}\big)(T,A)
&:=&\bigg(\sum_{k=0}^{n}(-1)^{n-k} \;{n\choose k}\;y^k \mathbf{a} x^{n-k}\bigg)(T,A),\\
\big(\big\{(x+y)^n\big\}_\mathbf{a}\big)(T,A)&=&i^n \big(\big\{(x-y)^n\big\}_\mathbf{a}\big)(iT,A).
\end{eqnarray*}

In $1995$, J. Agler and M. Stankus (\cite{rr01}, \cite{x2}, \cite{x3}) introduced the class of $m$-isometric operators and showed many
properties of such a family. In fact, $T \in \mathcal{B}(H)$ is said to be $m$-isometric if
$$\mathcal{I}^{m}_{I}(T):=\big(\big\{(yx-1)^m\big\}_\mathbf{a}\big)(T,I)=0,\quad \mathcal{I}^{0}_{I}(T)=I.$$
Sid Ahmed et al. $($\cite{X27}$)$ generalized the concept of those operators on a Hilbert space. They introduced the $(A,m)$-isometric operators in a semi-hilbertian space. For $m\geq1$, an operator $T\in \mathcal{B}(H)$ is an $(A,m)$-isometry if
\begin{equation}\label{i}
\mathcal{I}^{m}_{A}(T):=\big(\big\{(yx-1)^m\big\}_\mathbf{a}\big)(T,A)=0,\quad \mathcal{I}^{0}_{A}(T)=A.
\end{equation}
They extend well known properties and showed important results related to such a family
$($\cite{a.2}, \cite{sjn}$)$. The dynamic of the orbits of $(A,m)$-isometries was studied in \cite{faghih} and \cite{X28}. We have shown (see \cite{a.1}) that most of those properties hold true for $(A,n)$-symmetries (resp. skew $(A,n)$-symmetries). An operator $T$ is $(A,n)$-symmetric (resp. skew $(A,n)$-symmetric) if
\begin{eqnarray}\label{s}
\mathcal{S}^{n}_{A}(T)&:=&\big(\big\{(y-x)^n\big\}_\mathbf{a}\big)(T,A)=0,\quad \mathcal{S}^{(0)}_{A}(T)=A\nonumber\\
\big(resp.\;\;\zeta^{n}_{A}(T)&:=&\big(\big\{(y+x)^n\big\}_\mathbf{a}\big)(T,A)=0,\quad \zeta^{0}_{A}(T)=A\big).
\end{eqnarray}
Note that $(A,n)$-symmetries seems to be a natural generalization of $n$-symmetries, which satisfy an identity of the form $\mathcal{S}^{n}_{I}(T)=0,$ equivalently, $T\in Helton_{m}(T^*)$. The study of such a family was initiated by Helton in $1970$ (see \cite{x4.2}).\\

In \cite{mark1}, M. Stankus introduced and intensively studied isosymmetries, namely bounded linear operators satisfying
$$(yx-1)(y-x)(T)=T^{*2}T-T^{*}T^{2}-T^{*}+T=0.$$
In quest of generality, the author in \cite{mark} extends this definition to higher order, and introduces a class of operators termed $(m, n)$-isosymmetries. Such a family of bounded operators satisfies an identity of the form
$$(yx-1)^m(y-x)^n(T)=0.$$
Note that this class includes $n$-symmetries $($i.e. $T$ such that $(y-x)^n(T) = 0)$, $m$-isometries $($i.e. $T$ such that $(yx-1)^m(T) = 0)$ and some classes of non-normal operators (see \cite{mark1}). \\

 Recently, M. Ch\={o} et al. (\cite{t1}, \cite{t2}) introduced $(m,n,C)$-isosymmetries: an operator $T \in \mathcal{B}(H)$ is said to be $(m,n,C)$-isosymmetric if there exists a conjugation $C$ on $H$ such that $\gamma_{m,n}(T,C)=0$, where
$$
\gamma_{m,n}(T,C):=\left\{
                    \begin{array}{ll}
                      \dd\sum_{j=0}^{m}(-1)^{j} \;{m\choose j}\;T^{*m-j}\alpha_n(T,C)CT^{m-j}C, & \hbox{ } \\
                      \dd\sum_{k=0}^{n}(-1)^{k} \;{n\choose k}\;T^{*n-k}\beta_m(T,C)CT^{k}C. & \hbox{ }
                    \end{array}
                  \right.
$$
Recall that a conjugation on $H$ is an antilinear operator $C : H \longrightarrow H$ which satisfies $\langle Cx\mid Cy\rangle = \langle y\mid x\rangle$ for all $x, y \in H$ and $C^2 = I$.\\

 In the light of $(m,n)$-isosymmetric and $(m,n,C)$-isosymmetric operators, using the identities (\ref{i}) and (\ref{s}), we aim in this paper to introduce the higher order isosymmetries in semi-Hilbertian spaces and to initiate this study by giving some of their basic properties and results. For $A\in \mathcal{B}(H)^{+},\;T \in \mathcal{B}(H)$ and $ m,\,n\in \mathbb{N}$, set
$$
\Omega_{A}^{m,n}(T):=\left\{
                         \begin{array}{ll}
                          \dd\sum_{j=0}^{m}(-1)^{m-j} \;{m\choose j}\;T^{*j}\mathcal{S}^{n}_{A}(T)T^{j}, & \hbox{} \\
                          \dd\sum_{k=0}^{n}(-1)^{n-k} \;{n\choose k}\;T^{*k}\mathcal{I}^{m}_{A}(T)T^{n-k}, & \hbox{}
                         \end{array}
                       \right.,
\Lambda_{A}^{m,n}(T):=\left\{
                          \begin{array}{ll}
                           \dd\sum_{k=0}^{m}(-1)^{m-k} \;{m\choose k}\;T^{*k}\zeta^{n}_{A}(T)T^{k}, & \hbox{} \\
                           \dd\sum_{k=0}^{n}{n\choose k}\;T^{*k}\mathcal{I}^{m}_{A}(T)T^{n-k}. & \hbox{}
                          \end{array}
                        \right.$$
We define $(A;(m, n))$-isosymmetric and skew $(A;(m, n))$-isosymmetric operators as follows:
\begin{enumerate}
\item $T$ is said to be $(A;(m, n))$-isosymmetric if $\Omega_{A}^{m,n}(T)=0$.  Let $(A; (m, n))$ denote the set of $(A; (m, n))$-isosymmetries. Then
$(A; (m, 0))$ is the set of $(A, m)$-isometries and $(A; (0, n))$ is the set of $(A, n)$-symmetries.
\item $T$ is said to be skew $(A;(m, n))$-isosymmetric if $\Lambda_{A}^{m,n}(T)=0$.
\end{enumerate}

\begin{remark}
\noindent
\begin{enumerate}
\item Recall by definition
$$T\in (A; (m, n))\Longleftrightarrow \mathcal{S}^{n}_{A}(T)\in (I; (m, 0))   \Longleftrightarrow \mathcal{I}^{m}_{A}(T)\in (I; (0, n)) .$$
Note that:
\begin{enumerate}
\item If $\mathcal{S}^{n}_{A}(T)\geq0$, then $T\in (A; (m, n))$
if and only if $T\in (\mathcal{S}^{n}_{A}(T); (m, 0))$.
\item If $\mathcal{I}^{m}_{A}(T)\geq0$, then $T\in (A; (m, n))$
if and only if $T\in (\mathcal{I}^{m}_{A}(T); (0, n))$.
\end{enumerate}
\item An $(A;(1, 1))$-isosymmetry is an $A$-isosymmetry, and if $A=I$ then an $(A;(m, n))$-isosymmetry is an $(m, n)$-isosymmetry.
\item If $T\in (A; (m, 0))$, then $T\in(A; (m, n^\prime))$ for all integer $n^\prime\geq0$.
\item If $T\in (A; (0, n))$, then $T\in(A; (m^\prime, n))$ for all integer $m^\prime\geq0$.
\item If $S\in \mathcal{B}(H)$ is unitarly equivalent to $T$, then $S\in(A; (m, n))$ if and only if $T\in(VAV^*; (m, n))$, with $V$ is unitary. Indeed, since $T$ and $S$ are unitary equivalent, there exists $V\in B(H)$ unitary with $\mathcal{S}^{n}_{A}(S)=V^* \mathcal{S}^{n}_{VAV^*}(T)V.$
It holds
\begin{eqnarray*}
\Omega_{A}^{m,n}(S)
&=&V^* \big(\Omega_{VAV^*}^{m,n}(T)\big)V.
\end{eqnarray*}
\item From the following identity
$$ \big(\big\{(x+y)^n\big\}_\mathbf{a}\big)(T,A)=i^n \big(\big\{(x-y)^n\big\}_\mathbf{a}\big)(iT,A)$$
it can be seen that $T$ is skew $(A; (m, n))$-isosymmetric if and only if $iT \in (A; (m,n))$. So, several parallel results on skew $(A; (m, n))$-isosymmetries  follow from the results of $(A; (m, n))$-isosymmetries.
\item The class of $(A;(m, n))$-isosymmetric (resp. skew $(A;(m, n))$-isosymmetric) operators is closed in norm, that is: let $T \in \mathcal{B}(H)$ and $A\in \mathcal{B}(H)^+$, if $(T_p)_p$ is a sequence of $(A;(m, n))$-isosymmetric $($resp. skew $(A;(m, n))$-isosymmetric$)$ operators such that $\dd\lim_{p\rightarrow+\infty}\|T_p - T\| = 0$, then $T$ is also $(A;(m, n))$-isosymmetric $($resp. skew $(A;(m, n))$-isosymmetric$)$. Indeed, assume
$T_p \longrightarrow T$ in norm as $p\longrightarrow\infty$, then $0 =\Omega_{A}^{m,n}(T_p)\longrightarrow \Omega_{A}^{m,n}(T)$. Thus $\Omega_{A}^{m,n}(T)=0$ and $T \in (A; (m, n))$.
\end{enumerate}
\end{remark}

Let $$\mathcal{B}_{A}(H):=\big\{T\in \mathcal{B}(H):\mathcal{R}(T^{*}A)\subset \mathcal{R}(A)\big\}.$$
Recall that, if $T \in  \mathcal{B}_{A}(H) $, then $T$ admits an $A$-adjoint operator $T^\sharp$. This operator verifies
$$AT^\sharp =T^* A,\;\mathcal{R}(T^\sharp)\subseteq \overline{\mathcal{R}(A)}\;\hbox{and}\;\mathcal{N}(T^\sharp )=\mathcal{N}(T^* A).$$
\begin{enumerate}
\item Let $T \in  \mathcal{B}_{A}(H) $. Then, $T$ is $(A;(m, n))$-isosymmetric if and only if the following operator identity holds
$$\sum_{j=0}^{m}\sum_{k=0}^n(-1)^{m+n-(j+k)} \;{m\choose j}{n \choose k}\;T^{\sharp k+j} T^{n+j-k} =0.$$
%
\item Under some conditions, the classes of $(m,n)$-isosymmetries and $(A,(m,n))$-isosymmetries coincide. Let $T \in \mathcal{B}(H)$ and $A$ be injective. Then, the following claims hold.
\begin{enumerate}
\item If $A$ and $T$ commute, then $T$ is $(A,(m,n))$-isosymmetric if and only if $T$ is $(m,n)$-isosymmetric.
\item If $T$ is $A$-normal $($i.e $T^{\sharp}T=TT^{\sharp})$, then $T$ is $(A,(m,n))$-isosymmetric if and only if $T^\sharp$ is $(A,(m,n))$-isosymmetric.
\end{enumerate}
\end{enumerate}


\begin{example}
\noindent
\begin{enumerate}
\item The identity operator and the orthogonal projection on $\overline{R(A)}$ are in $(A;(m, n))$, for every $m,n\in \mathbb{N}$.
\item Let $A\in \mathcal{B}(H)^+$ and $T\in \mathcal{B}(H)$. If $T$ is nilpotent
of order $r > 2$, then $T\in(A; (2r-2, 2r-1)$. Indeed, since $T$ is nilpotent of order $r$, it gives that $T^*j=T^j=0$ for all $j\geq r$. Then, since $\max\{k+j,(2r-1)-j+k\}\geq r$ for any $j\in \{0,1,\cdots,2r-1\}$ and $k\in \{0,1,\cdots,2r-2\},$ we get
$$\Omega_{A}^{2r-2,2r-1}(T)=\sum_{k=0}^{2r-2}\sum_{j=0}^{2r-1}(-1)^{4r-3-k-j} \;{2r-2\choose k}{2r-1\choose j}\;T^{*k+j}AT^{2r-1-j+k}=0.$$
\item Generally, an $(A;(m, n))$-isosymmetry is not necessarily an $(m, n)$-isosymmetry. Let $H=\mathbb{C}^{2}$ with norm ${\|(x,y)\|}^2=|x|^2 + |y|^2$. Let $A = \left(
           \begin{array}{cc}
             1 & 1 \\
             1 & 1 \\
           \end{array}
         \right)$ and $T = \left(
            \begin{array}{cc}
              0 &  \frac{-1}{2} \\
              0 &  \frac{1}{2} \\
            \end{array}
          \right)
$. 
Since $T\in (A; (0, 1))$, it holds that $T\in (A; (1, 1))$. Moreover, $T$ is not isosymmetric.
\item An operator $T\in \mathcal{B}(H)$ can be in $(A; (m, n))$, without being in $(A; (m, 0))$ or in $(A; (0, n))$. In fact, Let $H=\mathbb{C}^{3}$ with norm ${\|(x,y,z)\|}^2=|x|^2 + |y|^2 + |z|^2$,
$A=\left(
     \begin{array}{ccc}
       0 & 0 & 0 \\
       0 & 1 & 1 \\
       0 & 1 & 1 \\
     \end{array}
   \right)
$ and
$T=\left(
     \begin{array}{ccc}
       0 & 0 &-1 \\
       1 & 0 & 0 \\
       0 & 1 & 0 \\
     \end{array}
   \right)
$. Then, a simple calculation gives $T\in (A; (1, 1))$ but neither in $(A; (1, 0))$, nor in $(A; (0, 1))$.
\end{enumerate}
\end{example}

We now outline the plan of this paper. In Section \ref{S3}, we restrict our selves to investigate general properties of $(A,(m,n))$-isosymmetric operators. In particular, we show that every integer power of an $(A,(m,n))$-isosymmetry is also $(A,(m,n))$-isosymmetric. Some relations  between $(A,(m,n))$-isosymmetries (resp. skew $(A,(m,n))$-isosymmetries) and their exponential operators are established. At the end of the section, we study some spectral properties. In Section $3$, we investigate the stability of an $(A,(m,n))$-isosymmetric (resp. skew $(A,(m,n))$-isosymmetric) operator $T$ under some perturbation by a nilpotent operator. More precisely, we show that if $T$ is $(A,(m,n))$-isosymmetric and if $Q$ is a nilpotent operator of order $r$ doubly commuting with $T$, then $T^p$ is $(A,(m,n))$-isosymmetric symmetric for any $p\in \mathbb{N}$ and $(T +Q)$ is $\big(A,(m+2r -2, n+2r -1)\big)$-isosymmetric. In the closing section, we characterize a family of $A$-isosymmetric upper triangular operator matrices. We first show that, under suitable conditions on $N,E,X\in \mathcal{B}(H)$, if $N$ is $A$-isometric, then $\textbf{T}=\left(
                                                                                            \begin{array}{cc}
                                                                                              N & E \\
                                                                                              0 & X \\
                                                                                            \end{array}
                                                                                          \right)
$ on $\textbf{H}=H\oplus H$ is $A$-isosymmetric if and only if $X$ is $A$-isosymmetric. Moreover, if $N$ is $A$-symmetric and $NE=EX$, then, $\textbf{T}$ is $A$-isosymmetric if and only if $X$ is $A$-isosymmetric and $AE=ANEX$. We then prove that if $\mathcal{M}=\ker\big(\mathcal{I}^{1}_{A}(\textbf{T})\big)$ is invariant for $A$ and for $\textbf{T},$ then $\textbf{T}=\left(
                                                                                            \begin{array}{cc}
                                                                                              N & E \\
                                                                                              0 & X \\
                                                                                            \end{array}
                                                                                          \right)$ on $\mathcal{M}\oplus \mathcal{M}^{\perp}$, with $N$ is an $A_{|\mathcal{M}}$-isometry and $E^*A_{|\mathcal{M}}N=0.$

\section{ Basic Properties of $(A;(m, n))$-isosymmetries}
\label{S3}

The main purpose of this section is to develop some properties for the classes of $(A;(m, n))$-isosymmetries and skew $(A;(m, n))$-isosymmetries on Hilbert spaces.

\begin{proposition}\label{waw}
Let $T \in \mathcal{B}(H)$ and $A\in \mathcal{B}(H)^+$. Then, the following assertions hold.
\begin{enumerate}
\item $T$ is $(A;(m, n))$-isosymmetric if and only if
\begin{eqnarray}\label{007}
\langle\Omega_{A}^{m,n}(T)x\,|\,x\rangle&=&\sum_{k=0}^{m}(-1)^{m-k} \;{m\choose k}\;\langle\mathcal{S}^{n}_{A}(T)T^{k}x\,|\,T^{k}x\rangle\nonumber\\
&=&\sum_{k=0}^{n}(-1)^{n-k} \;{n\choose k}\;\langle\mathcal{I}^{m}_{A}(T)T^{n-k}x\,|\,T^{k}x\rangle=0,\quad \forall\,x\in H.
\end{eqnarray}
\item If $T$ is $(A;(m, n))$-isosymmetric, then $T$ is $(A;(m^\prime , n^\prime ))$-isosymmetric $(m^\prime \geq m,n^\prime \geq n).$
\item If $T$ is $(A,n)$-symmetric $($resp. skew $A$-symmetric$)$, then $(T-s)$ is $(A;(m, n))$-isosymmetric $($resp. skew $(A;(m, n))$-isosymmetric$)$ for $s\in \mathbb{R}$.
\end{enumerate}
\end{proposition}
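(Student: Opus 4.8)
The plan is to prove the three assertions of Proposition~\ref{waw} in order, using the definitions of $\Omega_{A}^{m,n}$, $\mathcal{S}^n_A$, $\mathcal{I}^m_A$ together with the closure property and Remark~1 already recorded in the excerpt.

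\smallskip
\emph{Proof of (1).} This is essentially a bookkeeping statement. Starting from the definition
$$\Omega_{A}^{m,n}(T)=\sum_{k=0}^{m}(-1)^{m-k}\binom{m}{k}T^{*k}\,\mathcal{S}^{n}_{A}(T)\,T^{k},$$
I would pair $\Omega_{A}^{m,n}(T)x$ with $x$ and push the adjoint across the inner product, which yields the first displayed sum in \eqref{007}. For the second expression, I would instead use the alternative formula $\Omega_{A}^{m,n}(T)=\sum_{k=0}^{n}(-1)^{n-k}\binom{n}{k}T^{*k}\,\mathcal{I}^{m}_{A}(T)\,T^{n-k}$ and again move $T^{*k}$ to the other slot, giving $\langle \mathcal{I}^{m}_{A}(T)T^{n-k}x\mid T^{k}x\rangle$. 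The equivalence ``$T\in(A;(m,n))$ iff the quadratic form vanishes'' then follows because $\Omega_{A}^{m,n}(T)$ is a selfadjoint operator (each summand $T^{*k}\mathcal{S}^n_A(T)T^k$ is selfadjoint since $\mathcal{S}^n_A(T)=(\{(y-x)^n\}_{\mathbf a})(T,A)$ is selfadjoint as a real-coefficient polynomial in $T,T^*,A$), so a selfadjoint operator is zero iff its associated quadratic form vanishes identically; I should remark on this selfadjointness explicitly. One caveat: the paper writes $\mathcal{S}^n_A(T)$ with the convention $\mathcal{S}^{(0)}_A(T)=A$, and one must check the two displayed formulas for $\Omega_{A}^{m,n}(T)$ genuinely agree — but this is exactly the content of the combinatorial identity expanding $(yx-1)^m(y-x)^n$ two ways, which is part of the setup and may be invoked.

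\smallskip
\emph{Proof of (2).} This is immediate from (1) together with the elementary identity, valid for any selfadjoint $B$,
$$\sum_{k=0}^{m'}(-1)^{m'-k}\binom{m'}{k}T^{*k}BT^{k}
=\sum_{j=0}^{m'-m}(-1)^{m'-m-j}\binom{m'-m}{j}T^{*j}\!\left(\sum_{k=0}^{m}(-1)^{m-k}\binom{m}{k}T^{*k}BT^{k}\right)\!T^{j},$$
applied with $B=\mathcal{S}^n_A(T)$; hence $\Omega_A^{m,n}(T)=0$ forces $\Omega_A^{m',n}(T)=0$. The step from $n$ to $n'$ is handled symmetrically using the $\mathcal{I}^m_A$ form, or one simply notes $\mathcal{S}^{n}_A(T)=0\Rightarrow\mathcal{S}^{n'}_A(T)=0$ composed with the first reduction. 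Alternatively — and more cheaply — I can quote Remark~1, items (3)–(4) and the basic fact that $(A;(m,0))$ and $(A;(0,n))$ are closed under increasing the parameter, reducing (2) to those two monotonicities in each variable separately. Either route is short; I will write the first one since it is self-contained.

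\smallskip
\emph{Proof of (3).} Suppose $T$ is $(A,n)$-symmetric, i.e.\ $\mathcal{S}^n_A(T)=0$. I would first observe that translation by a real scalar does not disturb the symmetry class: since $\mathcal{S}^n_A(T)=(\{(y-x)^n\}_{\mathbf a})(T,A)$ depends on $T$ only through $T-T^*$-type differences, one checks directly that $\mathcal{S}^n_A(T-s)=\mathcal{S}^n_A(T)$ for $s\in\mathbb{R}$ (the substitution $x\mapsto x-s$, $y\mapsto y-\bar s=y-s$ leaves $y-x$ invariant, and since $s$ is real the $A$-adjoint substitution is consistent). Hence $\mathcal{S}^n_A(T-s)=0$, which by Remark~1(3) (or by the trivial observation that $\Omega_A^{m,n}$ is built from $\mathcal{S}^n_A$) gives $\Omega_A^{m,n}(T-s)=0$, i.e.\ $T-s\in(A;(m,n))$. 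The skew statement follows either by the same argument applied to $\zeta^n_A$ and $\Lambda_A^{m,n}$, noting $\zeta^n_A(T-s)=\zeta^n_A(T)$ is \emph{not} quite automatic — here one must be a little careful, since $(x+y)$ is not invariant under $x\mapsto x-s$; instead I would use Remark~1(6): $T-s$ is skew $(A;(m,n))$-isosymmetric iff $i(T-s)=iT-is\in(A;(m,n))$, and since $T$ skew $A$-symmetric means $iT$ is $(A,n)$-symmetric, while $-is$ is \emph{not} real, this needs the slightly stronger remark that $\mathcal{S}^n_A(iT - is)=\mathcal{S}^n_A(iT)$ because again $is$ cancels in the difference $y-x$. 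I expect the main (minor) obstacle to be exactly this point: being precise about which scalar translations preserve $\mathcal{S}^n_A$ versus $\zeta^n_A$, and making sure the $\mathbf a$-functional-calculus substitution $t(x,y)\mapsto t(x-s,y-s)$ is legitimate. Once that is pinned down, (3) is a one-line consequence of (1). I will therefore spend most of the writeup justifying the invariance $\mathcal{S}^n_A(T-s)=\mathcal{S}^n_A(T)$ carefully and then invoke (1) to conclude.
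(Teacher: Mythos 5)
The genuine gap is in the skew half of assertion (3). You rightly observe that $\zeta^{n}_{A}$ is not translation invariant, but the repair you propose --- pass to $i(T-s)=iT-is$ and claim $\mathcal{S}^{n}_{A}(iT-is)=\mathcal{S}^{n}_{A}(iT)$ ``because $is$ cancels in the difference $y-x$'' --- is false. Translating an operator $S$ by a scalar $c$ corresponds to the substitution $x\mapsto x-c$, $y\mapsto y-\bar{c}$, since $(S-cI)^{*}=S^{*}-\bar{c}I$; hence $y-x\mapsto (y-x)+(c-\bar{c})$. This is invariant exactly when $c$ is real (which is why your argument for the symmetric half works), whereas for $c=is$ the difference is shifted by $2is$. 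Concretely, one gets $\mathcal{S}^{n}_{A}(iT-is)=\sum_{j=0}^{n}\binom{n}{j}(2is)^{n-j}\,i^{j}\zeta^{j}_{A}(T)$, and if $\zeta^{1}_{A}(T)=0$ this equals $(2is)^{n}A\neq 0$; so $iT-is$ is not $(A,n)$-symmetric and the reduction collapses. The paper takes a different route here, expanding $\{((y-s)+(x-s))^{n}\}_{\mathbf a}=\sum_{i}\binom{n}{i}(-2s)^{n-i}\{(y+x)^{i}\}_{\mathbf a}$ together with the multinomial expansion of $((y-s)(x-s)-1)^{m}$, so that $\Lambda_{A}^{m,n}(T-s)$ becomes a sum of terms containing $\Lambda_{A}^{m-k-j,i}(T)$ for $0\le i\le n$; any correct write-up must confront the $i=0$ term $\Lambda_{A}^{p,0}(T)=\mathcal{I}^{p}_{A}(T)$, which does not vanish in general. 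Indeed $T=0$ is skew $A$-symmetric and $\Lambda_{A}^{m,n}(-sI)=(-2s)^{n}(s^{2}-1)^{m}A\neq 0$ for generic real $s$, so the skew statement cannot be proved as announced; in any case your argument for it does not go through.

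Two further remarks. In (1) you justify the quadratic-form criterion by asserting that $\mathcal{S}^{n}_{A}(T)$, hence $\Omega_{A}^{m,n}(T)$, is selfadjoint; this holds only for even $n$. For odd $n$ one has $(\mathcal{S}^{n}_{A}(T))^{*}=-\mathcal{S}^{n}_{A}(T)$ (e.g.\ $\mathcal{S}^{1}_{A}(T)=T^{*}A-AT$) and $(\Omega_{A}^{m,n}(T))^{*}=(-1)^{n}\Omega_{A}^{m,n}(T)$; the paper handles the odd case by passing to the selfadjoint operator $i\Omega_{A}^{m,n}(T)$. (Alternatively, on a complex Hilbert space $\langle Bx\,|\,x\rangle=0$ for all $x$ forces $B=0$ for arbitrary $B$ by polarization, so your conclusion survives even though your stated reason does not.) Assertion (2) is fine and is essentially the paper's one-step recursions for $\Omega_{A}^{m+1,n}$ and $\Omega_{A}^{m,n+1}$ written in iterated closed form, and your proof of the symmetric half of (3) via the invariance $\mathcal{S}^{n}_{A}(T-s)=\mathcal{S}^{n}_{A}(T)$ for real $s$ is correct and cleaner than the paper's multinomial identity.
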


\begin{proof}
\begin{enumerate}
\item A simple computation shows that
$$\big(\Omega_{A}^{m,n}(T)\big)^{*}=
\left\{
  \begin{array}{ll}
    \Omega_{A}^{m,n}(T), & \hbox{if n is even;} \\
    -\Omega_{A}^{m,n}(T), & \hbox{if n is odd.}
  \end{array}
\right.
$$
If $n$ is odd then $\big(i\Omega_{A}^{m,n}(T)\big)^{*}=i\Omega_{A}^{m,n}(T)$ and, consequently, $\mathcal{K}:=i\Omega_{A}^{m,n}(T)$ is a symmetric operator, which allows to conclude.
\item Since the following identities hold
\begin{eqnarray*}
\mathcal{I}^{m+1}_{A}(T)
 &=&\big( y\big\{(yx-1)^{m}\big\}_\mathbf{a}x-\big\{(yx-1)^{m}\big\}_\mathbf{a}\big)(T,A)
=T^* \mathcal{I}^{m}_{A}(T)-\mathcal{I}^{m}_{A}(T)T,\\
\mathcal{S}^{n+1}_{A}(T) &=& \big(y\big\{(y-x)^{n}\big\}_\mathbf{a}-\big\{(y-x)^{n}\big\}_\mathbf{a}x\big)(T,A)
= T^* \mathcal{S}^{n}_{A}(T)T-\mathcal{S}^{n}_{A}(T),
\end{eqnarray*}
we obtain immediately
\begin{eqnarray*}
\Omega_{A}^{m+1,n}(T)&=&T^* \Omega_{A}^{m,n}(T)-\Omega_{A}^{m,n}(T)T,\\
\Omega_{A}^{m,n+1}(T)&=&T^* \Omega_{A}^{m,n}(T)T-\Omega_{A}^{m,n}(T).
\end{eqnarray*}
The above two equalities completes the proof.
\item The following identity follows from the multinomial theorem
\begin{eqnarray*}
&&\big((y-s)(x-s)-1\big)^m\big\{\big((y-s)-(x-s)\big)^n\big\}_\mathbf{a}\\
&=&\big((yx-1)+(y -s)(-s)+(-s x)\big)^m\big\{(y-x)^n\big\}_\mathbf{a}\\
&=&\bigg(\sum_{\alpha_1+\alpha_2+\alpha_3=m}{m\choose \alpha_1,\alpha_2,\alpha_3}(y -s)^{\alpha_2} (-s)^{\alpha_3+\alpha_2} (yx-1)^{\alpha_1} x^{\alpha_3}\bigg)\big\{(y-x)^n\big\}_\mathbf{a}\\
&=&\sum_{\alpha_1+\alpha_2+\alpha_3=m}{m\choose \alpha_1,\alpha_2,\alpha_3}(y -s)^{\alpha_2} (-s)^{\alpha_3+\alpha_2} (yx-1)^{\alpha_1}\big\{(y-x)^n\big\}_\mathbf{a} x^{\alpha_3}\\
&=&\sum_{k=0}^{m}\sum_{j=0}^{m-k}\;{m\choose k}{m-k\choose j}\;(y -s)^{k} (-s)^{k+j} (yx-1)^{m-k-j}\big\{(y-x)^n\big\}_\mathbf{a} x^{j}.
\end{eqnarray*}
Moreover, we have
\begin{eqnarray*}
&&\big((y-s)(x-s)-1\big)^m\big\{\big((y-s)+(x-s)\big)^n\big\}_\mathbf{a}\\
&=&\sum_{k=0}^{m}\sum_{j=0}^{m-k}\sum_{i=0}^{n-i}\;{m\choose k}{m-k\choose j}{n\choose i}\;(y -s)^{k} (-s)^{k+j} (yx-1)^{m-k-j}\big\{(y+x)^i\big\}_\mathbf{a}(-2s)^{n-i} x^{j}.
\end{eqnarray*}
\noindent By using these identities with $x$ replaced by $T$, $y$ replaced by $T^*$ and $a$ replaced by $A$, we obtain
\begin{equation}
\Omega_{A}^{m,n}(T-s)=\sum_{k=0}^{m}\sum_{j=0}^{m-k}\;{m\choose k}{m-k\choose j}\;(T^* -s)^{k} (-s)^{k+j} \Omega_{A}^{m-k-j,n}(T) T^{j},
\end{equation}
\begin{equation}
\Lambda_{A}^{m,n}(T-s)=\sum_{k=0}^{m}\sum_{j=0}^{m-k}\sum_{i=0}^{n-i}\;{m\choose k}{m-k\choose j}{n\choose i}\;(T^* -s)^{k} (-s)^{k+j} \Lambda_{A}^{m-k-j,t}(T)(-2s)^{n-i} T^{j}.
\end{equation}
This completes the proof.
\end{enumerate}
\end{proof}

\begin{proposition}
Let $T \in \mathcal{B}(H)$ and $A\in \mathcal{B}(H)^+$. Then, the following statements hold.
\begin{enumerate}
\item If $T$ is $(A,m)$-isometric and $\Omega_{A}^{m-1,n}(T)=0$, then $T$ is $\Big(\frac{1}{(m-1)!}\mathcal{I}_{A}^{(m-1)}(T),n\Big)$-symmetric.
\item If $T$ is $(A,n)$-symmetric and $\Omega_{A}^{m,n-1}(T)=0$, then $T$ is $\Big(\frac{(-i)^{n-1}}{(n-1)!}\mathcal{S}_{A}^{(n-1)}(T),m\Big)$-isometric.
\end{enumerate}
\end{proposition}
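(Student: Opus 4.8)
The plan is to note that, once $\Omega_{A}^{p,q}(T)$ is rewritten through its two closed forms, each assertion becomes a trivial scaling identity together with one positivity statement, and the positivity is the only real content. Recall from the definitions that the first form of $\Omega_{A}^{p,q}(T)$ is exactly $\mathcal{I}^{p}_{\mathcal{S}^{q}_{A}(T)}(T)$ and the second form is exactly $\mathcal{S}^{q}_{\mathcal{I}^{p}_{A}(T)}(T)$, and that $B\mapsto\mathcal{I}^{p}_{B}(T)$ and $B\mapsto\mathcal{S}^{q}_{B}(T)$ are linear in $B$, so multiplying $B$ by a nonzero scalar just multiplies the output by that scalar. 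I will also use $(\mathcal{I}^{j}_{A}(T))^{*}=\mathcal{I}^{j}_{A}(T)$ and $(\mathcal{S}^{j}_{A}(T))^{*}=(-1)^{j}\mathcal{S}^{j}_{A}(T)$, both immediate from $A=A^{*}$.

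For (1): reading $\Omega_{A}^{m-1,n}(T)=0$ through the second form gives $\mathcal{S}^{n}_{\mathcal{I}^{m-1}_{A}(T)}(T)=0$, hence also $\mathcal{S}^{n}_{\frac{1}{(m-1)!}\mathcal{I}^{m-1}_{A}(T)}(T)=0$; so the only thing left is to check that $B:=\frac{1}{(m-1)!}\mathcal{I}^{m-1}_{A}(T)$ is positive semi-definite, which is where the hypothesis that $T$ is $(A,m)$-isometric enters. For such $T$ one has $\mathcal{I}^{j}_{A}(T)=0$ for every $j\geq m$, and with $f(k):=\|T^{k}x\|_{A}^{2}$ the quantity $\langle\mathcal{I}^{j}_{A}(T)T^{k}x,T^{k}x\rangle$ equals the $j$-th forward difference of $f$; hence $f$ is a polynomial in $k$ of degree at most $m-1$ whose leading coefficient is $\frac{1}{(m-1)!}\langle\mathcal{I}^{m-1}_{A}(T)x,x\rangle$, and since $f(k)\geq0$ for $k=0,1,2,\dots$ this coefficient is $\geq0$. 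Therefore $B\geq0$ (this positivity of $\mathcal{I}^{m-1}_{A}(T)$ for $(A,m)$-isometries is by now standard and may simply be quoted), and consequently $T$ is $(B,n)$-symmetric.

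For (2): reading $\Omega_{A}^{m,n-1}(T)=0$ through the first form gives $\mathcal{I}^{m}_{\mathcal{S}^{n-1}_{A}(T)}(T)=0$, hence $\mathcal{I}^{m}_{\frac{(-i)^{n-1}}{(n-1)!}\mathcal{S}^{n-1}_{A}(T)}(T)=0$, so it remains only to prove that $B:=\frac{(-i)^{n-1}}{(n-1)!}\mathcal{S}^{n-1}_{A}(T)$ is positive semi-definite. Here I would introduce $\Phi(t):=e^{itT^{*}}Ae^{-itT}$; expanding the two exponentials and collecting powers of $t$ (recognising the alternating binomial sum) yields $\Phi(t)=\sum_{j\geq0}\frac{i^{j}t^{j}}{j!}\mathcal{S}^{j}_{A}(T)$. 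Since $T$ is $(A,n)$-symmetric we have $\mathcal{S}^{j}_{A}(T)=0$ for all $j\geq n$ (which follows at once from the recursion for $\mathcal{S}^{\bullet}_{A}(T)$ recorded in Proposition \ref{waw}), so $\Phi$ is a polynomial in $t$ of degree at most $n-1$. For real $t$ one has $(e^{itT^{*}})^{*}=e^{-itT}$, hence $p_{x}(t):=\langle\Phi(t)x,x\rangle=\|e^{-itT}x\|_{A}^{2}\geq0$ for every $x$ and every real $t$; moreover each coefficient $\frac{i^{j}}{j!}\langle\mathcal{S}^{j}_{A}(T)x,x\rangle$ is real, because $\mathcal{S}^{j}_{A}(T)$ is self-adjoint for even $j$ and skew-adjoint for odd $j$. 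Thus $p_{x}$ is a nonnegative real polynomial of degree at most $n-1$, so it has even degree with nonnegative leading coefficient. If $n-1$ is even this forces $\frac{(-i)^{n-1}}{(n-1)!}\langle\mathcal{S}^{n-1}_{A}(T)x,x\rangle=\frac{i^{n-1}}{(n-1)!}\langle\mathcal{S}^{n-1}_{A}(T)x,x\rangle\geq0$ for all $x$, and since $\mathcal{S}^{n-1}_{A}(T)$ is then self-adjoint this says exactly $B\geq0$; if $n-1$ is odd the coefficient of $t^{n-1}$ must vanish, which forces $\mathcal{S}^{n-1}_{A}(T)=0$ (a skew operator with vanishing quadratic form is zero), so $B=0$ and the conclusion is trivial. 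In all cases $T$ is $(B,m)$-isometric.

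The main obstacle is the positivity in (2). One must hit upon the right generating function $e^{itT^{*}}Ae^{-itT}$ — naive $e^{-tT}$-type variants do not produce a nonnegative quadratic form — and then run the parity argument for nonnegative real polynomials while carefully matching the normalising constant $(-i)^{n-1}$ to the self-adjointness (for $n-1$ even) or skew-adjointness (for $n-1$ odd) of $\mathcal{S}^{n-1}_{A}(T)$. The positivity in (1) is the familiar polynomial-growth argument for $(A,m)$-isometries and should be routine or simply cited.
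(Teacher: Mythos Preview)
Your proposal is correct and follows essentially the same route as the paper: both arguments reduce the statement to the two closed forms of $\Omega_{A}^{p,q}(T)$ together with the positivity of $\frac{1}{(m-1)!}\mathcal{I}^{m-1}_{A}(T)$ (for $(A,m)$-isometries) and of $\frac{(-i)^{n-1}}{(n-1)!}\mathcal{S}^{n-1}_{A}(T)$ (for $(A,n)$-symmetries). The only difference is that the paper simply quotes these two positivity facts as known (from the $(A,m)$-isometry and $(A,n)$-symmetry literature, cf.\ \cite{X27,a.1}), whereas you supply self-contained proofs via the forward-difference/polynomial-growth argument and the generating function $e^{itT^{*}}Ae^{-itT}$ respectively; your arguments for these are correct and in fact parallel the exponential identity (\ref{s11}) used later in the paper.
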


\begin{proof}
\begin{enumerate}
\item Let consider the following identity
$$\Omega_{A}^{m,n}(T)=\sum_{k=0}^{n}(-1)^{n-k} \;{n\choose k}\;T^{*k}\mathcal{I}^{m}_{A}(T)T^{n-k} .$$
Since $T$ is $(A,m)$-isometric, $\frac{1}{(m-1)!}\mathcal{I}^{m-1}_{A}(T)$ is positive and $T$ is $(A;(m, n))$-isosymmetric. Moreover, we have
$$\frac{1}{(m-1)!}\Omega_{A}^{m-1,n}(T)=\sum_{k=0}^{n}(-1)^{n-k} \;{n\choose k}\;T^{*k}\Big\{\frac{1}{(m-1)!}\mathcal{I}^{m-1}_{A}(T)\Big\}T^{n-k}=0.$$
Then, $\frac{(-i)^{n-1}}{(m-1)!(n-1)!}\Omega_{A}^{m-1,n-1}(T)$ is positive and $T$ is $\Big(\frac{1}{(m-1)!}\mathcal{I}_{A}^{(m-1)}(T),n\Big)$-symmetric.
\item Let consider the following identity
$$\Omega_{A}^{m,n}(T)=\sum_{k=0}^{m}(-1)^{m-k} \;{m\choose k}\;T^{*k}\mathcal{S}^{n}_{A}(T)T^{k}.$$
Since $T$ is $(A,n)$-symmetric, $\frac{(-i)^{n-1}}{(n-1)!}\mathcal{S}_{A}^{(n-1)}(T)$ is a positive operator and $T$ is $(A;(m, n))$-isosymmetric.
 On the other hand, we have
$$\frac{(-i)^{n-1}}{(n-1)!}\Omega_{A}^{m,n-1}(T)=\sum_{k=0}^{m}(-1)^{m-k} \;{m\choose k}\;T^{*k}\Big\{\frac{(-i)^{n-1}}{(n-1)!}\mathcal{S}^{n-1}_{A}(T)\Big\}T^{k}=0 .$$
Hence, $\frac{(-i)^{n-1}}{(m-1)!(n-1)!}\Omega_{A}^{m-1,n-1}(T)$ is positive and $T$ is $\Big(\frac{(-i)^{n-1}}{(n-1)!}\mathcal{S}_{A}^{(n-1)}(T),m\Big)$-isometric.
\end{enumerate}
\end{proof}

\begin{corollary}Let $T \in \mathcal{B}(H)$ and $A\in \mathcal{B}(H)^+$. Assume that $T$ is $(A,n)$-symmetric and $\Omega_{A}^{m,n-1}(T)=0$. Then, the following properties hold.
\begin{enumerate}
\item If $n=2p+1$, then $\Omega_{A}^{m-1,2p}(T)\geq0$ if $p$ is even.
\item If $n$ is even, then $T$ is $(A;(m-1, n-1))$-isosymmetric.
\end{enumerate}
\end{corollary}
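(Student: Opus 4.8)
The plan is to read both parts off a positivity relation already isolated inside the proof of the preceding Proposition. Under the present hypotheses --- $T$ is $(A,n)$-symmetric and $\Omega_A^{m,n-1}(T)=0$ --- that proof establishes, in particular, that
$$P:=\frac{(-i)^{n-1}}{(m-1)!\,(n-1)!}\,\Omega_A^{m-1,n-1}(T)\geq 0.$$
Both assertions of the Corollary then reduce to tracking the value of $(-i)^{n-1}$ together with whether $\Omega_A^{m-1,n-1}(T)$ is self-adjoint or skew-adjoint, as recorded in Proposition~\ref{waw}(1).

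For part (1) I would substitute $n=2p+1$, so that $n-1=2p$ and $(-i)^{n-1}=\big((-i)^2\big)^{p}=(-1)^{p}$; hence $P=\frac{(-1)^{p}}{(m-1)!\,(2p)!}\,\Omega_A^{m-1,2p}(T)\geq 0$. When $p$ is even this is precisely the statement $\Omega_A^{m-1,2p}(T)\geq 0$, which is meaningful since $\Omega_A^{m-1,2p}(T)$ is self-adjoint, its second index $2p$ being even (by the computation in Proposition~\ref{waw}(1)).

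Part (2) is the substantive case: with $n$ even, $n-1$ is odd, $(-i)^{n-1}$ is purely imaginary and $\Omega_A^{m-1,n-1}(T)$ is skew-adjoint, so $P\geq 0$ merely re-states that $P$ is self-adjoint, and the vanishing $\Omega_A^{m-1,n-1}(T)=0$ cannot be squeezed out algebraically. I would instead prove that an $(A,n)$-symmetric operator with $n$ even is $(A,n-1)$-symmetric, and then conclude at once. For that sub-claim, fix $x\in H$ and set $h_x(t):=\langle Ae^{itT}x\,|\,e^{itT}x\rangle=\|e^{itT}x\|_A^{2}\geq 0$ for all $t\in\mathbb{R}$; an induction on $k$ using the identity $\mathcal{S}_A^{(k+1)}(T)=T^{*}\mathcal{S}_A^{(k)}(T)-\mathcal{S}_A^{(k)}(T)T$ yields $h_x^{(k)}(t)=(-i)^{k}\langle\mathcal{S}_A^{(k)}(T)e^{itT}x\,|\,e^{itT}x\rangle$, so $h_x$ is a real polynomial in $t$ of degree $\leq n-1$ because $\mathcal{S}_A^{(n)}(T)=0$. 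A nonzero real polynomial that is nonnegative on all of $\mathbb{R}$ has even degree, and $n-1$ is odd, so $\deg h_x\leq n-2$; hence the coefficient of $t^{n-1}$, namely $\frac{(-i)^{n-1}}{(n-1)!}\langle\mathcal{S}_A^{(n-1)}(T)x\,|\,x\rangle$, vanishes for every $x\in H$. Since $\mathcal{S}_A^{(n-1)}(T)$ is skew-adjoint ($n-1$ odd), writing $\mathcal{S}_A^{(n-1)}(T)=iH$ with $H=H^{*}$ forces $H=0$, i.e.\ $\mathcal{S}_A^{(n-1)}(T)=0$. Then $\Omega_A^{m-1,n-1}(T)=\sum_{k=0}^{m-1}(-1)^{m-1-k}\binom{m-1}{k}T^{*k}\mathcal{S}_A^{(n-1)}(T)T^{k}=0$, i.e.\ $T\in(A;(m-1,n-1))$.

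The main obstacle is exactly this vanishing in part (2): everything the preceding Proposition supplies is one operator inequality for a skew-adjoint operator, which can never force that operator to be zero, so the argument must leave pure algebra and invoke the analytic fact that a polynomial nonnegative on the whole real line has even degree (equivalently, the known reduction that $(A,n)$-symmetry with $n$ even entails $(A,n-1)$-symmetry). The only genuinely routine verifications are the induction giving the formula for $h_x^{(k)}$ and the remark that a skew-adjoint operator with identically vanishing numerical values is $0$.
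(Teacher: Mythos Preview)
Your proof is correct and follows the same route as the paper. For part~(1) your argument is identical to the paper's: substitute $n=2p+1$ into the positivity
\[
\frac{(-i)^{n-1}}{(m-1)!\,(n-1)!}\,\Omega_A^{m-1,n-1}(T)\geq 0
\]
coming from the preceding Proposition, and read off the conclusion when $p$ is even.

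For part~(2) the paper does exactly what you do conceptually --- reduce to the statement that an $(A,n)$-symmetric operator with $n$ even is already $(A,n-1)$-symmetric, and then conclude $\Omega_A^{m-1,n-1}(T)=\sum_{k}(-1)^{m-1-k}\binom{m-1}{k}T^{*k}\mathcal{S}_A^{n-1}(T)T^{k}=0$ --- but it obtains that reduction by citing an external reference (Theorem~2.5 of \cite{a.1}), whereas you supply a self-contained proof via the nonnegative-polynomial argument $h_x(t)=\|e^{itT}x\|_A^2\geq 0$. Your inline argument is correct (the induction for $h_x^{(k)}$ and the skew-adjointness of $\mathcal{S}_A^{n-1}(T)$ when $n-1$ is odd are straightforward), and it is in fact the same idea that underlies the cited result and that the paper itself exploits later (Theorem~\ref{n-1} and Corollary~\ref{ddf}). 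So the two proofs differ only in that you prove the key lemma rather than quote it.
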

\begin{proof}
\begin{enumerate}
\item Assume $n=2p+1$. Since $T$ is $(A,n)$-symmetric and $\Omega_{A}^{m,n-1}(T)=0$, we have
$$0\leq\frac{(-i)^{n-1}}{(m-1)!(n-1)!}\Omega_{A}^{m-1,n-1}(T)=\frac{(-1)^p}{(m-1)!(2p)!}\Omega_{A}^{m,2p}(T)$$
which allows to conclude.
\item Assume that $n$ is even. Since $T$ is $(A,n)$-symmetric, by Theorem $2.5$-\cite{a.1} we have $\mathcal{S}^{n-1}_{A}(T)=0$. Hence, $\Omega_{A}^{m-1,n-1}(T)= 0$, that is $T$ is $(A,(m-1, n-1))$-isosymmetric.
\end{enumerate}
\end{proof}

\begin{theorem}\label{therm11}
Let $T \in \mathcal{B}(H)$ and $A\in \mathcal{B}(H)^+$. Then, the following assertions hold.
\begin{enumerate}
\item If $T$ is invertible, then $T$ is $(A;(m, n))$-isosymmetric $($resp. skew $(A;(m, n))$-isosymmetric$)$ if and only if $T^{-1}$ is
 $(A;(m, n))$-isosymmetric $($resp. skew $(A;(m, n))$-isosymmetric$)$.
\item If $T$ is $(A;(m, n))$-isosymmetric, then $T^k$ is $(A;(m , n ))$-isosymmetric $(\forall\,k\in \mathbb{N})$.
\item If $T$ is skew $(A;(m, n))$-isosymmetric, then $T^k$ is skew $(A;(m , n ))$-isosymmetric $(k$ odd$)$.
\end{enumerate}
\end{theorem}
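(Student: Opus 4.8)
The plan is as follows. For assertion (1) I would work from the fully expanded form
$$\Omega_{A}^{m,n}(T)=\sum_{j=0}^{m}\sum_{k=0}^{n}(-1)^{m+n-j-k}\binom{m}{j}\binom{n}{k}\,T^{*(j+k)}AT^{\,j+n-k},$$
obtained by inserting $\mathcal I^{m}_{A}(T)=\sum_{j}(-1)^{m-j}\binom{m}{j}T^{*j}AT^{j}$ into the second defining expression of $\Omega_{A}^{m,n}$. When $T$ is invertible, multiplying $\Omega_{A}^{m,n}(T)=0$ on the left by $(T^{-1})^{*(m+n)}$ and on the right by $(T^{-1})^{m+n}$ is legitimate (all exponents occurring lie in $[0,m+n]$), and after the substitution $j\mapsto m-j$, $k\mapsto n-k$ followed by multiplication by $(-1)^{m+n}$ it reproduces exactly $\Omega_{A}^{m,n}(T^{-1})=0$; the converse is the same computation with $T$ and $T^{-1}$ interchanged. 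The skew statement is identical with $\Omega$ replaced by $\Lambda$ (only the coefficient becomes $(-1)^{m-j}$), or else it follows from $iT\in(A;(m,n))\Leftrightarrow T\text{ skew}$ together with the elementary identity $\Omega_{A}^{m,n}(-S)=(-1)^{n}\Omega_{A}^{m,n}(S)$.

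Assertion (2) is the core, and the engine is a pair of power lemmas valid for \emph{every} $B\in\mathcal B(H)$ (not assumed positive or self-adjoint) and every $q\in\mathbb N$:
$$\sum_{k=0}^{m}(-1)^{m-k}\binom{m}{k}T^{*k}BT^{k}=0\ \Longrightarrow\ \sum_{k=0}^{m}(-1)^{m-k}\binom{m}{k}T^{*qk}BT^{qk}=0,$$
and likewise $\sum_{k}(-1)^{n-k}\binom{n}{k}T^{*k}BT^{n-k}=0$ implies $\sum_{k}(-1)^{n-k}\binom{n}{k}T^{*qk}BT^{q(n-k)}=0$. These are the familiar ``powers of an $(A,m)$-isometry, resp.\ $(A,n)$-symmetry'' statements, but for a general weight; I would prove them by the standard exponent-polynomial device, working with the sesquilinear form $G(s,t)=\langle BT^{s}x\mid T^{t}y\rangle$ rather than a quadratic form. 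Replacing $x,y$ by $T^{a}x,T^{b}y$ in the hypothesis shows that the appropriate $m$-th (resp.\ $n$-th) finite difference of $G$ vanishes along every diagonal $s-t=\text{const}$ (resp.\ anti-diagonal $s+t=\text{const}$); hence $G$ restricted to each such line is a polynomial of degree $<m$ (resp.\ $<n$), a property preserved by the dilation $(s,t)\mapsto(qs,qt)$, and re-pairing with $x,y$ gives the stated conclusion.

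Granting the lemmas, I would finish (2) as follows. Fix $T\in(A;(m,n))$ and $q\in\mathbb N$. From $\Omega_{A}^{m,n}(T)=\mathcal S^{n}_{\mathcal I^{m}_{A}(T)}(T)=0$, the second power lemma applied with $B=\mathcal I^{m}_{A}(T)$ gives $\sum_{k}(-1)^{n-k}\binom{n}{k}(T^{q})^{*k}\mathcal I^{m}_{A}(T)(T^{q})^{\,n-k}=0$; expanding $\mathcal I^{m}_{A}(T)$ and regrouping (powers of $T$, and powers of $T^{*}$, commute among themselves) rewrites the left-hand side as $\sum_{j}(-1)^{m-j}\binom{m}{j}T^{*j}\,\mathcal S^{n}_{A}(T^{q})\,T^{j}$, so this equals $0$. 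Now the first power lemma with $B=\mathcal S^{n}_{A}(T^{q})$ yields $\sum_{j}(-1)^{m-j}\binom{m}{j}(T^{q})^{*j}\mathcal S^{n}_{A}(T^{q})(T^{q})^{j}=0$, and that operator is precisely $\mathcal I^{m}_{\mathcal S^{n}_{A}(T^{q})}(T^{q})=\Omega_{A}^{m,n}(T^{q})$; hence $T^{q}\in(A;(m,n))$. Assertion (3) then follows: if $T$ is skew $(A;(m,n))$ then $iT\in(A;(m,n))$, so $(iT)^{k}=i^{k}T^{k}\in(A;(m,n))$ by (2), and when $k$ is odd $i^{k}=\pm i$, whence the identity $\Omega_{A}^{m,n}(-S)=(-1)^{n}\Omega_{A}^{m,n}(S)$ gives $iT^{k}\in(A;(m,n))$, i.e.\ $T^{k}$ is skew $(A;(m,n))$.

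I expect the main obstacle to be exactly this ``two-layer rescaling''. Membership in $(A;(m,n))$ is an intrinsically two-parameter condition, so one cannot simply dilate all exponents at once: the trick is to rescale the exponents coming from the $\mathcal S^{n}$-layer \emph{first} — which automatically upgrades the frozen weight $\mathcal I^{m}_{A}(T)$ to the correct $\mathcal S^{n}_{A}(T^{q})$ — and only afterwards rescale the exponents coming from the $\mathcal I^{m}$-layer. Making this precise is also what forces the power lemmas to be stated for a general, possibly non-positive (and, for odd $n$, not even self-adjoint) weight $B$ rather than for a positive $A$.
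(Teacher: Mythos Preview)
Your argument is correct. Part (1) is essentially the paper's own proof: the paper also conjugates $\Omega_{A}^{m,n}(T)$ by $(T^{-1})^{m+n}$ and re-indexes, only it keeps $\mathcal S^{n}_{A}(T)$ grouped rather than fully expanding. For parts (2) and (3), however, you take a genuinely different route. The paper argues purely algebraically in commuting indeterminates: from the factorizations $y^{k}x^{k}-1=(yx-1)(y^{k-1}x^{k-1}+\cdots+1)$ and $y^{k}-x^{k}=(y-x)(y^{k-1}+\cdots+x^{k-1})$ (and, for (3), $y^{k}+x^{k}=(y+x)(y^{k-1}-y^{k-2}x+\cdots+x^{k-1})$ when $k$ is odd) it writes $\Omega_{A}^{m,n}(T^{k})$, respectively $\Lambda_{A}^{m,n}(T^{k})$, as a two-sided $T^{*},T$--linear combination of $\Omega_{A}^{m,n}(T)$, respectively $\Lambda_{A}^{m,n}(T)$, and is done in one stroke. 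Your approach instead isolates two reusable ``power lemmas'' (the $(B,m)$-isometry and $(B,n)$-symmetry relations are preserved under $T\mapsto T^{q}$ for an arbitrary weight $B$), proves them via the finite-difference/polynomial-degree argument on the sesquilinear array $G(s,t)=\langle BT^{s}x\mid T^{t}y\rangle$, and then applies them in two layers---first rescaling the $\mathcal S^{n}$ exponents (which converts the inner weight $\mathcal I^{m}_{A}(T)$ into $\mathcal S^{n}_{A}(T^{q})$), then the $\mathcal I^{m}$ exponents. The paper's route is shorter and avoids any auxiliary machinery; yours makes explicit the two-parameter structure, packages the power stability as stand-alone lemmas valid for non-positive and non-selfadjoint weights (exactly what the intermediate step needs, since $\mathcal I^{m}_{A}(T)$ and $\mathcal S^{n}_{A}(T^{q})$ are in general neither), and derives (3) cleanly from (2) via the $T\mapsto iT$ correspondence and $\Omega_{A}^{m,n}(-S)=(-1)^{n}\Omega_{A}^{m,n}(S)$ rather than by a separate factorization.
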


\begin{proof}
\begin{enumerate}
\item Assume that $T$ is $(A;(m, n))$-isosymmetric and invertible. Since if $m$ is even, then $(-1)^p = (-1)^{m-p}$ and if $m$ is odd, then
$(-1)^p = (-1)^{m+1-p}$, we obtain
\begin{eqnarray*}
0&=&(T^{-1})^{*m+n}\Omega_{A}^{m,n}(T)(T^{-1})^{m+n}\\
&=&\sum_{j=0}^{m}(-1)^{m-j} \;{m\choose j}\;T^{*-m-n}T^{*j}\mathcal{S}^{n}_{A}(T)T^{j}T^{-m-n}\\
&=&\sum_{j=0}^{m}(-1)^{m-j} \;{m\choose m-j}\;T^{*-(m-j)}\big(T^{*-n}\mathcal{S}^{n}_{A}(T)T^{-n}\big)T^{-(m-j)}\\
&=&\sum_{p=0}^{m}(-1)^{p} \;{m\choose p}\;(T^{-1})^{*p}\big((T^{-1})^{*n}\mathcal{S}^{n}_{A}(T)(T^{-1})^{n}\big)(T^{-1})^{-p}\\
&=&\left\{
  \begin{array}{ll}
    \Omega_{T^{-1}}^{m,n}(A), & \hbox{if $m$ is even;} \\
    -\Omega_{T^{-1}}^{m,n}(A), & \hbox{if $m$ is odd.}
  \end{array}
\right.
\end{eqnarray*}
By applying a similar argument, we can show the corresponding property for skew $(A;(m, n))$-isosymmetric operators.
\item Note that, for $k\in \mathbb{N}$ and for some constants $\alpha_l$ and $\beta_j$, we have
\begin{eqnarray*}
&&(y^k x^k-1)^m\big\{(y^k -x^k)^n\big\}_\mathbf{a}\\
&=&\big((yx - 1)(y^{k-1}x^{k-1} + y^{k-2}x^{k-2} + \cdots + 1)\big)^m \big\{\big((y - x)(y^{k-1} + y^{k-2}x + \cdots + x^{k-1})\big)^n\big\}_\mathbf{a}\\
&=&\bigg(\sum_{l=0}^{m(k-1)}\alpha_{l}y^{m(k-1)-l}(yx-1)^m x^{m(k-1)-l}\bigg)\bigg\{\sum_{j=0}^{n(k-1)}\beta_{j}y^{n(k-1)-j}(y-x)^n x^j\bigg\}_\mathbf{a}\\
&=&\bigg(\sum_{l=0}^{m(k-1)}\alpha_{l}y^{m(k-1)-l}(yx-1)^m x^{m(k-1)-l} \bigg)\bigg(\sum_{j=0}^{n(k-1)}\beta_{j}y^{n(k-1)-j}\big\{(y-x)^n\big\}_\mathbf{a} x^j\bigg)\\
&=&\sum_{l=0}^{m(k-1)}\sum_{j=0}^{n(k-1)}\alpha_{l}\beta_{j}y^{m(k-1)-l}y^{n(k-1)-j}(yx-1)^m \big\{(y-x)^n\big\}_\mathbf{a}x^jx^{m(k-1)-l}.
\end{eqnarray*}
The above identity gives
\begin{eqnarray*}
\Omega_{A}^{m,n}(T^k)&=&\big((y^k x^k-1)^m\big\{(y^k -x^k)^n\big\}_\mathbf{a}\big)\big(T^*,T,A\big)\\
&=&\sum_{l=0}^{m(k-1)}\sum_{j=0}^{n(k-1)}\alpha_{l}\beta_{j}T^{*m(k-1)-l+n(k-1)-j}\Omega_{A}^{m,n}(T)T^{j+m(k-1)-l},
\end{eqnarray*}
and the desired claim follows from that.
\item For an odd integer $k$ and for some constants $\alpha_l$ and $\xi_j$, the following formula holds true
\begin{eqnarray*}
&&(y^k x^k-1)^m\big\{(y^k +x^k)^n\big\}_\mathbf{a}\\
&=&\big((yx - 1)(y^{k-1}x^{k-1} + y^{k-2}x^{k-2} + \cdots + 1)\big)^m \bigg\{\Big((y+x)(y^{k-1}-y^{k-2}x+\cdots-yx^{k-2}+x^{k-1}\Big)^n\bigg\}_\mathbf{a}\\
&=&\bigg(\sum_{l=0}^{m(k-1)}\eta_{l}y^{m(k-1)-l}(yx-1)^m x^{m(k-1)-l}\bigg)\bigg\{\sum_{j=0}^{n(k-1)}\xi_{j}y^{n(k-1)-j}(y+x)^n x^j\bigg\}_\mathbf{a}\\
&=&\bigg(\sum_{l=0}^{m(k-1)}\eta_{l}y^{m(k-1)-l}(yx-1)^m x^{m(k-1)-l} \bigg)\bigg(\sum_{j=0}^{n(k-1)}\xi_{j}y^{n(k-1)-j}\big\{(y+x)^n\big\}_\mathbf{a} x^j\bigg)\\
&=&\sum_{l=0}^{m(k-1)}\sum_{j=0}^{n(k-1)}\eta_{l}\xi_{j}y^{m(k-1)-l}y^{n(k-1)-j}(yx-1)^m \big\{(y+x)^n\big\}_\mathbf{a}x^jx^{m(k-1)-l}.
\end{eqnarray*}
It holds from the above identity
\begin{eqnarray*}
\Lambda_{A}^{m,n}(T^k)&=&\big((y^k x^k-1)^m\big\{(y^k +x^k)^n\big\}_\mathbf{a}\big)\big(T^*,T,A\big)\\
&=&\sum_{l=0}^{m(k-1)}\sum_{j=0}^{n(k-1)}\eta_{l}\xi_{j}T^{*m(k-1)-l+n(k-1)-j}\Lambda_{A}^{m,n}(T)T^{j+m(k-1)-l}.
\end{eqnarray*}
If $T$ is skew $(A;(m, n))$-isosymmetric, that is $\Lambda_{A}^{m,n}(T)=0$, then $\Lambda_{A}^{m,n}(T^k)=0$. Hence, $T^k$ is skew $(A;(m , n ))$-isosymmetric.
\end{enumerate}
\end{proof}

In the following corollary, we state some immediate consequences of Theorem \ref{therm11}.
\begin{corollary}Let $T \in \mathcal{B}(H)$ and $A\in \mathcal{B}(H)^+$. Then, the following statements hold.
\begin{enumerate}
\item If $T$ is $(A,m)$-isometric, then $T^k$ is $(A,m)$-isometric for any $k\in \mathbb{N}$.
\item If $T$ is $(A,n)$-symmetric, then $T^k$ is $(A,n)$-symmetric for any $k\in \mathbb{N}$.
\item If $T$ is skew $(A,n)$-symmetric, then $T^k$ is skew $(A,n)$-symmetric for $k$ odd.
\end{enumerate}
\end{corollary}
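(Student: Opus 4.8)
The plan is to obtain all three assertions as direct specializations of Theorem~\ref{therm11}, reading off the classical families from the dictionary recorded in the introduction: $(A;(m,0))$ is precisely the set of $(A,m)$-isometries, $(A;(0,n))$ is precisely the set of $(A,n)$-symmetries, and, since $\Lambda_{A}^{0,n}(T)=\zeta^{n}_{A}(T)$, a skew $(A;(0,n))$-isosymmetry is precisely a skew $(A,n)$-symmetry. So no new computation is needed; one only has to feed the right values of $m$ and $n$ into the theorem.

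Concretely, for (1) I would argue: if $T$ is $(A,m)$-isometric then $T\in(A;(m,0))$, whence Theorem~\ref{therm11}(2) (applied with $n=0$) gives $T^{k}\in(A;(m,0))$ for every $k\in\mathbb{N}$, i.e. $T^{k}$ is $(A,m)$-isometric. For (2), if $T$ is $(A,n)$-symmetric then $T\in(A;(0,n))$, and Theorem~\ref{therm11}(2) (applied with $m=0$) yields $T^{k}\in(A;(0,n))$ for every $k\in\mathbb{N}$, that is, $T^{k}$ is $(A,n)$-symmetric. For (3), if $T$ is skew $(A,n)$-symmetric then $\Lambda_{A}^{0,n}(T)=\zeta^{n}_{A}(T)=0$, so $T$ is skew $(A;(0,n))$-isosymmetric; Theorem~\ref{therm11}(3) (with $m=0$) then gives that $T^{k}$ is skew $(A;(0,n))$-isosymmetric for $k$ odd, i.e. skew $(A,n)$-symmetric for $k$ odd.

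The only thing that deserves a line of care — and it is the closest thing to an obstacle, though a minor one — is checking that the polynomial-identity expansions used inside the proof of Theorem~\ref{therm11} degenerate correctly in the boundary cases $m=0$ and $n=0$: when $m=0$ the factor $\big(y^{k-1}x^{k-1}+\cdots+1\big)^{m}$ collapses to $1$ (so the $\alpha_{l}$-sum is a single term), and when $n=0$ the corresponding $\beta_{j}$-sum (resp. $\xi_{j}$-sum) collapses likewise; in either case the formula for $\Omega_{A}^{m,n}(T^{k})$ (resp. $\Lambda_{A}^{m,n}(T^{k})$) remains valid, so the conclusion of Theorem~\ref{therm11} applies verbatim to $(A;(m,0))$ and $(A;(0,n))$. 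Once this is observed, the corollary follows immediately.
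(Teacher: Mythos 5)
Your proposal is correct and is exactly the route the paper takes: the corollary is stated there as an immediate consequence of Theorem~\ref{therm11}, obtained by specializing to $(A;(m,0))$ for $(A,m)$-isometries, $(A;(0,n))$ for $(A,n)$-symmetries, and $\Lambda_{A}^{0,n}(T)=\zeta^{n}_{A}(T)$ for the skew case. Your extra check that the polynomial expansions in the proof of Theorem~\ref{therm11} degenerate correctly when $m=0$ or $n=0$ is a sensible precaution the paper leaves implicit.
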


\begin{proposition}\label{lem2.3}
Let $T$ be $(A,(m, n))$-isosymmetric. Then, the following statements hold.
\begin{enumerate}
\item If $p\geq m,$ then
\begin{equation}\label{(2.17)}
\sum_{k=0}^p(-1)^{p-k}\binom{p}{k}k^i T^{*k}\mathcal{S}^{n}_{A}(T)T^{k}=0,\quad i=0,1,\cdots,p-m.
\end{equation}
\item If $p\geq n,$ then
\begin{equation}\label{(2.18)}
\sum_{k=0}^p(-1)^{p-k}\binom{p}{k}k^i T^{*k}\mathcal{I}^{m}_{A}(T)T^{p-k}=0,\quad i=0,1,\cdots,p-n.
\end{equation}
\end{enumerate}
\end{proposition}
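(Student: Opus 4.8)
The plan is to reformulate the hypothesis $\Omega_{A}^{m,n}(T)=0$ operatorially and then reduce everything to the elementary fact that the $p$-th finite difference annihilates every polynomial of degree strictly less than $p$. Concretely, I would record the two ``multiplication'' reformulations of $\Omega_{A}^{m,n}(T)$ that come straight from its definition. Introducing the linear map $\Phi:\mathcal{B}(H)\to\mathcal{B}(H)$, $\Phi(X):=T^{*}XT$, one has $T^{*k}\mathcal{S}^{n}_{A}(T)T^{k}=\Phi^{k}\big(\mathcal{S}^{n}_{A}(T)\big)$, hence
\[
\Omega_{A}^{m,n}(T)=(\Phi-I)^{m}\big(\mathcal{S}^{n}_{A}(T)\big);
\]
and introducing the commuting maps $L(X):=T^{*}X$, $R(X):=XT$, one has $T^{*k}\mathcal{I}^{m}_{A}(T)T^{p-k}=L^{k}R^{p-k}\big(\mathcal{I}^{m}_{A}(T)\big)$ and, writing $M:=L-R$,
\[
\Omega_{A}^{m,n}(T)=M^{n}\big(\mathcal{I}^{m}_{A}(T)\big).
\]
Thus $T\in(A;(m,n))$ says precisely that $(\Phi-I)^{m}\big(\mathcal{S}^{n}_{A}(T)\big)=0$ and $M^{n}\big(\mathcal{I}^{m}_{A}(T)\big)=0$; consequently $(\Phi-I)^{j}\big(\mathcal{S}^{n}_{A}(T)\big)=0$ for all $j\ge m$ and $M^{j}\big(\mathcal{I}^{m}_{A}(T)\big)=0$ for all $j\ge n$.

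For part (1), I would expand $\Phi^{k}=\big(I+(\Phi-I)\big)^{k}=\sum_{j\ge 0}\binom{k}{j}(\Phi-I)^{j}$ (a finite sum, with $\binom{k}{j}=0$ for $j>k$) and apply it to $\mathcal{S}^{n}_{A}(T)$; only the terms $j=0,\dots,m-1$ survive. Interchanging the two finite sums then gives
\[
\sum_{k=0}^{p}(-1)^{p-k}\binom{p}{k}k^{i}\,T^{*k}\mathcal{S}^{n}_{A}(T)T^{k}
=\sum_{j=0}^{m-1}\Bigg(\sum_{k=0}^{p}(-1)^{p-k}\binom{p}{k}\,k^{i}\binom{k}{j}\Bigg)(\Phi-I)^{j}\big(\mathcal{S}^{n}_{A}(T)\big).
\]
For $0\le i\le p-m$ and $0\le j\le m-1$, the polynomial $k\mapsto k^{i}\binom{k}{j}$ has degree $i+j\le(p-m)+(m-1)=p-1<p$, so the bracketed coefficient, being (up to sign) its $p$-th forward difference at $0$, vanishes; hence \eqref{(2.17)} holds. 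Part (2) is identical in structure: write $L=R+M$, use that $R$ and $M$ commute to get $L^{k}R^{p-k}=\sum_{j\ge 0}\binom{k}{j}M^{j}R^{p-j}$, drop the terms $j\ge n$ since $M^{j}\big(\mathcal{I}^{m}_{A}(T)\big)=0$ there, and finish with the same degree count $i+j\le(p-n)+(n-1)=p-1<p$ to obtain \eqref{(2.18)}.

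The only step that is not mere bookkeeping is the finite-difference lemma: $\sum_{k=0}^{p}(-1)^{p-k}\binom{p}{k}q(k)=0$ whenever $q$ is a polynomial with $\deg q<p$. I would either quote this standard identity or dispatch it in a line, recalling that the forward-difference operator $\Delta$ lowers polynomial degree by one and that $\Delta^{p}q(0)=\sum_{k=0}^{p}(-1)^{p-k}\binom{p}{k}q(k)$. Beyond this I do not anticipate any genuine obstacle; the whole content of the proposition is the observation that $\Omega_{A}^{m,n}(T)=(\Phi-I)^{m}\big(\mathcal{S}^{n}_{A}(T)\big)=M^{n}\big(\mathcal{I}^{m}_{A}(T)\big)$, after which \eqref{(2.17)} and \eqref{(2.18)} are forced by the binomial expansion together with the degree count.
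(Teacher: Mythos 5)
Your proof is correct, and it takes a genuinely different route from the paper's. The paper proves \eqref{(2.17)} by induction on $p$: the $k=0$ term is dropped (since $k^i=0$ for $i\ge 1$), the index is shifted, and the identity $\binom{p+1}{k+1}(k+1)=(p+1)\binom{p}{k}$ together with the binomial expansion of $(k+1)^{i-1}$ reduces the claim at level $p+1$ to the already-established vanishing at level $p$ for the exponents $j\le i-1$; statement (2) is then dispatched as ``similar.'' You instead package the hypothesis as $(\Phi-I)^{m}\big(\mathcal{S}^{n}_{A}(T)\big)=0$ and $(L-R)^{n}\big(\mathcal{I}^{m}_{A}(T)\big)=0$, expand $\Phi^{k}$ (resp.\ $L^{k}R^{p-k}$) in powers of $\Phi-I$ (resp.\ of $M=L-R$), and reduce both parts to the single fact that the $p$-th forward difference annihilates polynomials of degree less than $p$, via the degree count $i+j\le p-1$. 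Both arguments rest on the same combinatorial substrate, but yours is non-inductive, treats (1) and (2) uniformly, and makes visible why the bound $i\le p-m$ (resp.\ $i\le p-n$) is exactly the right one; the paper's induction is more elementary in that it avoids introducing the maps $\Phi$, $L$, $R$, but it obscures this structure and leaves part (2) to the reader. One cosmetic point worth keeping in your write-up: the expansion $\Phi^{k}(S)=\sum_{j=0}^{m-1}\binom{k}{j}(\Phi-I)^{j}(S)$ is valid for all $k\ge 0$, including $k<m-1$, precisely because $\binom{k}{j}$, read as the polynomial $k(k-1)\cdots(k-j+1)/j!$, vanishes at the integers $0,\dots,j-1$ --- you note this parenthetically, and it is the only place where a careless reading could see a gap.
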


\begin{proof}
\begin{enumerate}
\item We aim to prove (\ref{(2.17)}) by induction on $p$. Assume that $T$ is $(A,(m, n))$-isosymmetric, then $(2)$-Proposition (\ref{waw}) yields that $T$ is $(A,(m^\prime , n))$-isosymmetric for each  $m^\prime \geq m.$ Thus, for $i=0$ the proof of $(\ref{(2.17)})$ is immediate. Moreover, the result is true for $p=m$. Assume now that $(\ref{(2.17)})$ is true for $i \in \{1,2,\cdots,p-m\}$ and prove it for $i\in \{1,2,\cdots,p+1-m \}$. By the induction hypothesis, we obtain
\begin{eqnarray*}
&&\sum_{k=0}^{p+1}(-1)^{p+1-k}\binom{p+1}{k}k^iT^{*k}\mathcal{S}^{n}_{A}(T)T^{k}\\
&=&\sum_{k=1}^{p+1}(-1)^{p+1-k}\binom{p+1}{k}k^iT^{*k}\mathcal{S}^{n}_{A}(T)T^{k}\\
&=&\sum_{k=0}^{p}(-1)^{p-k}\binom{p+1}{k+1}(k+1)^iT^{*k+1}\mathcal{S}^{n}_{A}(T)T^{k+1}\\
&=&(p+1)T^{*}\bigg\{\sum_{k=0}^{p}(-1)^{p-k}\binom{p}{k}(k+1)^{i-1}T^{*k}\mathcal{S}^{n}_{A}(T)T^{k}\bigg\}T\\
&=&(p+1)T^{*}\bigg\{\sum_{k=0}^{p}(-1)^{p-k}\binom{p}{k}\bigg(\sum_{j=0}^{i-1}\binom{i-1}{j}k^j\bigg)T^{*k}\mathcal{S}^{n}_{A}(T)T^{k}\bigg\}T\\
&=&(p+1)T^{*}\bigg\{\sum_{j=0}^{i-1}\binom{i-1}{j}\bigg(\underbrace{\sum_{k=0}^{p}(-1)^{p-k}\binom{p}{k}k^jT^{*k}\mathcal{S}^{n}_{A}(T)T^{k}}_{=0}\bigg)\bigg\}T= 0.
\end{eqnarray*}
\item The remaining identity (\ref{(2.18)}) in the statement $(2)$ also holds by a similar method.
\end{enumerate}
\end{proof}

Now, we devote most of our interest to distinguish some connection between $(A,(m, n))$-isosymmetries, skew $(A,(m, n))$-isosymmetries and their associated exponential operator. We begin our study by introducing the notion of left $A$-invertible operators which will play a
central role in characterizing skew $(A;(m, n))$-isosymmetries and giving some their interesting properties.

\begin{definition}
Let $A\in \mathcal{B}(H)^+$ and $T,\;S\in \mathcal{B}(H)$. We say that the operator $T$ is left $($resp., right$)$ $(A,m)$-invertible by $S$, for some integer $m \geq1$, if $\big\{(yx-1)^m\big\}_\mathbf{a}\big(S,T,A)=0$ $\big($resp. $\big\{(yx-1)^m\big\}_\mathbf{a}\big(T,S,A)=0\big).$
\end{definition}

\begin{proposition}\label{dr1}
Let $A\in \mathcal{B}(H)^+$ and $R_1,\,R_2,\, S_1,\, S_2 \in \mathcal{B}(H)$  such that $R_1 R_2$ = $R_2 R_1$ and $S_1 S_2$ = $S_2 S_1$.
Assume that $R_1$ is a left $(A,m)$-inverse of $S_1$ and $R_2$ is a left $(A,m)$-inverse of $S_2$, then $R_1R_2$ is a left $(A,m+n-1)$-inverse of $S_1S_2$.
\end{proposition}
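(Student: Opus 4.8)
The plan is to recast the whole proposition as a single statement about one pair of commuting operators on $\mathcal{B}(H)$, and then dispose of it by an elementary nilpotency argument. For $i=1,2$ introduce the elementary operator $L_i$ on $\mathcal{B}(H)$ given by $L_i(X)=R_iXS_i$. Since $L_i^{k}(X)=R_i^{k}XS_i^{k}$, expanding the binomial yields
\[
(L_i-I)^{k}(A)=\sum_{j=0}^{k}(-1)^{k-j}\binom{k}{j}R_i^{j}AS_i^{j}=\big\{(yx-1)^{k}\big\}_\mathbf{a}(R_i,S_i,A).
\]
Thus the hypothesis ``$R_1$ is a left $(A,m)$-inverse of $S_1$'' is precisely $(L_1-I)^{m}(A)=0$, the hypothesis on $R_2,S_2$ is precisely $(L_2-I)^{n}(A)=0$, and the conclusion ``$R_1R_2$ is a left $(A,m+n-1)$-inverse of $S_1S_2$'' amounts to $(L_1L_2-I)^{m+n-1}(A)=0$, once we know the two elementary facts below.

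First I would verify that $L_1L_2=L_2L_1$: a direct computation gives $L_1L_2(X)=R_1R_2XS_2S_1$ and $L_2L_1(X)=R_2R_1XS_1S_2$, and these coincide for all $X$ precisely because $R_1R_2=R_2R_1$ and $S_1S_2=S_2S_1$. This is the only place where the two commutativity assumptions are used, and it is the hinge of the argument. Using the same commutativities, an immediate induction gives $(L_1L_2)^{k}(X)=R_1^{k}R_2^{k}XS_1^{k}S_2^{k}=(R_1R_2)^{k}X(S_1S_2)^{k}$, so that indeed $(L_1L_2-I)^{m+n-1}(A)=\big\{(yx-1)^{m+n-1}\big\}_\mathbf{a}(R_1R_2,S_1S_2,A)$; it remains to show this is $0$.

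The remaining step is a purely algebraic lemma: if $N_1,N_2$ are commuting linear maps on a vector space and $v$ satisfies $N_1^{m}v=0$ and $N_2^{n}v=0$, then $\big((I+N_1)(I+N_2)-I\big)^{m+n-1}v=0$. To prove it, set $N_i:=L_i-I$ (so that $L_1,N_1,N_2$ pairwise commute on $\mathcal{B}(H)$, thanks to $L_1L_2=L_2L_1$), write $L_1L_2-I=L_1(L_2-I)+(L_1-I)=L_1N_2+N_1$, and expand by the binomial theorem---legitimate since $L_1N_2$ and $N_1$ commute:
\[
(L_1L_2-I)^{m+n-1}=\sum_{j=0}^{m+n-1}\binom{m+n-1}{j}L_1^{j}N_2^{j}N_1^{m+n-1-j}.
\]
Applying this to $A$ and splitting on the index $j$: if $j\geq n$ then $N_2^{j}(A)=0$, while if $j\leq n-1$ then $m+n-1-j\geq m$ and so $N_1^{m+n-1-j}(A)=0$; since $L_1,N_1,N_2$ commute, each summand kills $A$, and hence $(L_1L_2-I)^{m+n-1}(A)=0$, as required.

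I expect the only genuine care needed to be bookkeeping: making the identification $\big\{(yx-1)^{k}\big\}_\mathbf{a}(R,S,A)=(L-I)^{k}(A)$ with $L(X)=RXS$ precise, and checking that commutativity of the $R_i$ among themselves and of the $S_i$ among themselves already suffices for both $L_1L_2=L_2L_1$ and the closed form of $(L_1L_2)^{k}$, with no cross-commutation relation between $R_i$ and $S_j$ required. (For the exponent $m+n-1$ to come out right, the second hypothesis in the statement should read ``$R_2$ is a left $(A,n)$-inverse of $S_2$''.)
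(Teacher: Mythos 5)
Your proof is correct (modulo the typo you rightly flag: the second hypothesis must read ``$R_2$ is a left $(A,n)$-inverse of $S_2$''), but it takes a genuinely different route from the paper's. The paper fixes $x,y\in H$, encodes everything in the scalar double sequence $a_{i,j}=\langle R_1^iR_2^jAS_1^iS_2^j\,x\mid y\rangle$, notes that the hypotheses become finite-difference equations in $i$ and in $j$ separately, and then invokes an external combinatorial lemma (Corollary~2.5 of Duggal--M\"{u}ller) to deduce that the diagonal sequence $a_{s,s}$ satisfies the $(m+n-1)$-st difference equation. You instead work at the level of the elementary operators $L_i(X)=R_iXS_i$ on $\mathcal{B}(H)$, recast the hypotheses as $(L_1-I)^m(A)=0$ and $(L_2-I)^n(A)=0$, and prove the combinatorial content yourself via the decomposition $L_1L_2-I=L_1N_2+N_1$ into commuting summands followed by a binomial expansion and an index split at $j=n$. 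Your version is self-contained where the paper's outsources the key step to a citation, and it isolates exactly where the two commutativity hypotheses enter (the single identity $L_1L_2=L_2L_1$ and the closed form of $(L_1L_2)^k$); the paper's sesquilinear reduction is the natural choice if one wants to reuse the Duggal--M\"{u}ller machinery verbatim. Both arguments are valid and reach the same operator identity $\sum_{s=0}^{m+n-1}(-1)^{m+n-1-s}\binom{m+n-1}{s}(R_1R_2)^sA(S_1S_2)^s=0$.
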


\begin{proof}
Fix $x,y\in H$ and let $a_{i,j} =\langle R_{1}^i R_{2}^j AS_{1}^i S_{2}^j \,x\mid y\rangle$. Then,
for all non-negative integers $k$ and $l$, we have
$$a_{k+i,l} =\langle R_{1}^i AS_{1}^i\big(S_{1}^k S_{2}^l \,x\big)\mid \big(R_{1}^{*k} R_{2}^{*l} y\big)\rangle,\quad
a_{k,l+j} =\langle R_{2}^j AS_{2}^j\big(S_{1}^k S_{2}^l \,x\big)\mid \big(R_{1}^{*k} R_{2}^{*l} y\big)\rangle .$$
The left $(A,m)$-invertibility of $S_1$ by $R_1$ and the left $(A,n)$-invertibility of $S_2$ by $R_2$ gives
$$\sum_{i=0}^{m}(-1)^{m-i} a_{k+i,l}=0, \quad \sum_{j=0}^{n}(-1)^{n-j}a_{k,l+j}=0.$$
Applying \cite[Corollary 2.5]{Dudl} we deduce that
$$0=\sum_{s=0}^{m+n-1}(-1)^{m+n-1-s} a_{s,s}=\sum_{s=0}^{m+n-1}(-1)^{m+n-1-s} \;{m+n-1\choose s}\;\big\langle(R_{1}R_{2})^{s}A( S_{1}S_{2})^{s})x|y\big\rangle$$
Since $x$ and $y$ are arbitrary in $H$, it holds
$$\sum_{s=0}^{m+n-1}(-1)^{m+n-1-s} \;{m+n-1\choose s}\;(R_{1}R_{2})^{s}A( S_{1}S_{2})^{s})=0.$$
Hence, $R_1R_2$ is a left $(A,m+n-1)$-inverse of $S_1S_2.$

\end{proof}

\begin{proposition}\label{lft1r}
Let $A\in \mathcal{B}(H)^+$ and $T\in \mathcal{B}(H)$. Then, the following statements hold.
\begin{enumerate}
\item $T$ is $(A,(m, n))$-isosymmetric if and only if, for each positive integer $k$, it holds
\begin{equation}\label{lft1.0.0}
\Big(e^{isT}\Big)^{*k}\mathcal{I}^{m}_{A}(T)\Big(e^{isT}\Big)^{k}=\sum_{h=0}^{n-1}\frac{(-isk)^{h}}{h!}\;\Omega_{A}^{m,h}(T).
\end{equation}
\item $T$ is skew $(A,(m, n))$-isosymmetric if and only if, for each positive integer $k$, it holds
\begin{equation}\label{lft1.0}
\Big(e^{isT^*}\Big)^{k}\mathcal{I}^{m}_{A}(T)\Big(e^{isT}\Big)^{k}=\sum_{h=0}^{n-1}\frac{(isk)^{h}}{h!}\Lambda_{A}^{m,h}(T).
\end{equation}
\end{enumerate}
\end{proposition}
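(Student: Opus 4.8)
The plan is to establish a single ``master identity'' that rewrites the left-hand sides of (\ref{lft1.0.0}) and (\ref{lft1.0}) as power series in $s$ with coefficients $\Omega_{A}^{m,h}(T)$ (resp.\ $\Lambda_{A}^{m,h}(T)$), and then to read off both equivalences by comparing coefficients. The only algebraic facts needed are $\Omega_{A}^{m,0}(T)=\Lambda_{A}^{m,0}(T)=\mathcal{I}^{m}_{A}(T)$ (immediate from the definitions with $n=0$, since $\mathcal{S}^{0}_{A}(T)=\zeta^{0}_{A}(T)=A$) and the one-step recursions
\begin{equation*}
\Omega_{A}^{m,h+1}(T)=T^{*}\Omega_{A}^{m,h}(T)-\Omega_{A}^{m,h}(T)T,\qquad \Lambda_{A}^{m,h+1}(T)=T^{*}\Lambda_{A}^{m,h}(T)+\Lambda_{A}^{m,h}(T)T,
\end{equation*}
both of which follow directly from the defining sums by Pascal's rule.

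For assertion (1), I introduce the $\mathcal{B}(H)$-valued entire function $\phi_{h}(u):=e^{-uT^{*}}\,\Omega_{A}^{m,h}(T)\,e^{uT}$, $u\in\mathbb{C}$. Since $T^{*}$ commutes with $e^{-uT^{*}}$, differentiating and inserting the first recursion gives $\phi_{h}'(u)=-e^{-uT^{*}}\big(T^{*}\Omega_{A}^{m,h}(T)-\Omega_{A}^{m,h}(T)T\big)e^{uT}=-\phi_{h+1}(u)$, hence $\phi_{0}^{(j)}(0)=(-1)^{j}\Omega_{A}^{m,j}(T)$ for all $j\ge 0$. Expanding $\phi_{0}$ in its everywhere-convergent Taylor series and putting $u=isk$ yields, for every $k\in\mathbb{N}$ and $s\in\mathbb{R}$,
\begin{equation}\label{master1}
\big(e^{isT}\big)^{*k}\,\mathcal{I}^{m}_{A}(T)\,\big(e^{isT}\big)^{k}=e^{-iskT^{*}}\mathcal{I}^{m}_{A}(T)\,e^{iskT}=\sum_{j\ge 0}\frac{(-isk)^{j}}{j!}\,\Omega_{A}^{m,j}(T).
\end{equation}
If $T$ is $(A,(m,n))$-isosymmetric then $\Omega_{A}^{m,j}(T)=0$ for all $j\ge n$ by Proposition \ref{waw}\,(2), so the series in (\ref{master1}) collapses to the finite sum in (\ref{lft1.0.0}), giving that identity for every $k$. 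Conversely, if (\ref{lft1.0.0}) holds for even one $k\in\mathbb{N}$, then subtracting it from (\ref{master1}) leaves $\sum_{j\ge n}\frac{(-isk)^{j}}{j!}\Omega_{A}^{m,j}(T)=0$ for all $s\in\mathbb{R}$; since $k\ne 0$, equating coefficients of this $\mathcal{B}(H)$-valued power series in $s$ forces $\Omega_{A}^{m,j}(T)=0$ for all $j\ge n$, in particular $\Omega_{A}^{m,n}(T)=0$.

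Assertion (2) runs identically with $\psi_{h}(u):=e^{uT^{*}}\,\Lambda_{A}^{m,h}(T)\,e^{uT}$: the $+$ sign in the $\Lambda$-recursion gives $\psi_{h}'(u)=\psi_{h+1}(u)$, hence $\psi_{0}^{(j)}(0)=\Lambda_{A}^{m,j}(T)$, and $u=isk$ in the Taylor expansion of $\psi_{0}$ produces $\big(e^{isT^{*}}\big)^{k}\mathcal{I}^{m}_{A}(T)\big(e^{isT}\big)^{k}=\sum_{j\ge 0}\frac{(isk)^{j}}{j!}\Lambda_{A}^{m,j}(T)$; the truncation and coefficient-comparison steps then carry over, using that $T$ is skew $(A,(m,n))$-isosymmetric iff $\Lambda_{A}^{m,j}(T)=0$ for all $j\ge n$ (from the $\Lambda$-recursion, or from Remark 1.1\,(6) together with Proposition \ref{waw}\,(2)). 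One can equally deduce (2) from (1) applied to $iT$, using $\mathcal{I}^{m}_{A}(iT)=\mathcal{I}^{m}_{A}(T)$ and $\Omega_{A}^{m,h}(iT)=(-i)^{h}\Lambda_{A}^{m,h}(T)$. Nothing here is deep; the two points that need care are the sign bookkeeping in the $(-1)^{j}$, $(-i)^{j}$ factors — one slip corrupts (\ref{master1}) — and the routine justification that two analytic functions of $s$ agreeing for all real $s$ have the same Taylor coefficients, which is just a matter of applying the functionals $B\mapsto\langle Bx\mid y\rangle$ and invoking uniqueness for scalar power series. The real ``obstacle'' is merely organizational: keeping the exponent $k$, the parameter $s$, and the summation index $h$ from getting tangled.
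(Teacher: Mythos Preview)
Your proof is correct and takes essentially the same approach as the paper's: both establish the full power-series identity $\big(e^{isT}\big)^{*k}\mathcal{I}^{m}_{A}(T)\big(e^{isT}\big)^{k}=\sum_{h\ge 0}\frac{(-isk)^{h}}{h!}\,\Omega_{A}^{m,h}(T)$ and then truncate it using $\Omega_{A}^{m,h}(T)=0$ for $h\ge n$. Your derivation is a bit more self-contained---you differentiate $e^{-uT^*}\mathcal{I}^m_A(T)e^{uT}$ directly via the one-step recursion $\Omega_{A}^{m,h+1}(T)=T^{*}\Omega_{A}^{m,h}(T)-\Omega_{A}^{m,h}(T)T$, whereas the paper imports the expansion of $(e^{isT})^{*}A\,e^{isT}$ from \cite{a.1} and then applies the outer isometry sum---and you also spell out the converse direction by coefficient comparison, which the paper leaves implicit.
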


\begin{proof}
\begin{enumerate}
\item W have shown, in \cite{a.1}, that
\begin{eqnarray}\label{s11}
(e^{isT})^*A(e^{isT})&=&\mathcal{S}^{0}_{A}(T)+(-is)\mathcal{S}^{1}_{A}(T)+\frac{(-is)^2}{2!}\mathcal{S}^{2}_{A}(T)+\frac{(-is)^3}{3!}\mathcal{S}^{3}_{A}(T)+\cdots .
\end{eqnarray}
It holds from (\ref{s11}) that
\begin{eqnarray*}\label{bnl}
&&\Big(e^{isT}\Big)^{*k}\mathcal{I}^{m}_{A}(T)\Big(e^{isT}\Big)^{k}=\sum_{j=0}^{m}(-1)^{m-j} \;{m\choose j}T^{*j}\Big(e^{isT}\Big)^{*k} A\Big(e^{isT}\Big)^k T^{j}\nonumber\\
&=& \sum_{j=0}^{m}(-1)^{m-j} \;{m\choose j}T^{*j}\mathcal{S}^{0}_{A}(T)T^{j}+(-is)k\sum_{j=0}^{m}(-1)^{m-j} \;{m\choose j}T^{*j}\mathcal{S}^{1}_{A}(T)T^{j}\nonumber\\
&&+\frac{(-is)^2k^2}{2!}\sum_{j=0}^{m}(-1)^{m-j} \;{m\choose j}T^{*j}\mathcal{S}^{2}_{A}(T)T^{j}+\frac{(-is)^3k^3}{3!}\sum_{j=0}^{m}(-1)^{m-j} \;{m\choose j}T^{*j}\mathcal{S}^{3}_{A}(T)T^{j}+\cdots +\nonumber\\
&&+\cdots+\frac{(-is)^{n-1}k^{n-1}}{(n-1)!}\sum_{j=0}^{m}(-1)^{m-j} \;{m\choose j}T^{*j}\mathcal{S}^{n-1}_{A}(T)T^{j}\nonumber\\
&&+\frac{(-is)^{n}k^{n}}{n!}\sum_{j=0}^{m}(-1)^{m-j} \;{m\choose j}T^{*j}\mathcal{S}^{n}_{A}(T)T^{j}+\cdots \nonumber\\
&=&\Omega_{A}^{m,0}(T)+(-is)k\Omega_{A}^{m,1}(T)+\frac{(-is)^2k^2}{2!}\Omega_{A}^{m,2}(T)+\frac{(-is)^3k^3}{3!}\Omega_{A}^{m,3}(T)+\cdots \nonumber\\
&&+\cdots+\frac{(-is)^{n-1}k^{n-1}}{(n-1)!}\Omega_{A}^{m,n-1}(T)+\frac{(-is)^{n}k^{n}}{n!}\Omega_{A}^{m,n}(T)+\cdots
\end{eqnarray*}
Since $T$ is $(A,(m, n))$-isosymmetric, $\Omega_{A}^{m,n}(T)=0$ for $n^\prime\geq n$. This completes the proof.
\item A simple calculation shows that
\begin{eqnarray}\label{s1}
e^{isT^*}Ae^{isT}&=&\zeta^{0}_{T}(A)+(is)\zeta^{1}_{T}(A)+\frac{(is)^2}{2!}\zeta^{2}_{T}(A)+\frac{(is)^3}{3!}\zeta^{3}_{T}(A)+\cdots .
\end{eqnarray}
It holds from (\ref{s1}) that
\begin{eqnarray*}\label{bnl}
&&\Big(e^{isT^*}\Big)^{k}\mathcal{I}^{m}_{A}(T)\Big(e^{isT}\Big)^{k}=\sum_{j=0}^{m}(-1)^{m-j} \;{m\choose j}T^{*j}\Big(e^{isT^*}\Big)^{k} A\Big(e^{isT}\Big)^k T^{j}\nonumber\\
&=& \sum_{j=0}^{m}(-1)^{m-j} \;{m\choose j}T^{*j}AT^{j}+(is)k\sum_{j=0}^{m}(-1)^{m-j} \;{m\choose j}T^{*j}\zeta^{1}_{A}(T)T^{j}\nonumber\\
&&+\frac{(is)^2k^2}{2!}\sum_{j=0}^{m}(-1)^{m-j} \;{m\choose j}T^{*j}\zeta^{2}_{A}(T)T^{j}+\frac{(is)^3k^3}{3!}\sum_{j=0}^{m}(-1)^{m-j} \;{m\choose j}T^{*j}\zeta^{3}_{A}(T)T^{j}+\cdots +\nonumber\\
&&+\cdots+\frac{(is)^{n-1}k^{n-1}}{(n-1)!}\sum_{j=0}^{m}(-1)^{m-j} \;{m\choose j}T^{*j}\zeta^{n-1}_{A}(T)T^{j}\nonumber\\
&&+\frac{(is)^{n}k^{n}}{n!}\sum_{j=0}^{m}(-1)^{m-j} \;{m\choose j}T^{*j}\zeta^{n}_{A}(T)T^{j}+\cdots \nonumber\\
&=&\Lambda_{A}^{m,0}(T)+(is)k\Lambda_{A}^{m,1}(T)+\frac{(is)^2k^2}{2!}\Lambda_{A}^{m,2}(T)+\frac{(is)^3k^3}{3!}\Lambda_{A}^{m,3}(T)+\cdots \nonumber\\
&&+\cdots+\frac{(is)^{n-1}k^{n-1}}{(n-1)!}\Lambda_{A}^{m,n-1}(T)+\frac{(is)^{n}k^{n}}{n!}\Lambda_{A}^{m,n}(T)+\cdots
\end{eqnarray*}
\noindent Since $T$ is skew $(A,(m, n))$-isosymmetric, $\Lambda_{A}^{m,n}(T)=0$ for $n^\prime\geq n$, and this allows to conclude.
\end{enumerate}
\end{proof}

\begin{theorem}\label{leftinv}
Let $A\in \mathcal{B}(H)^+$ and $T\in \mathcal{B}(H)$. Then the following statements hold.
\begin{enumerate}
\item If $T$ is $(A,(m, n))$-isosymmetric, then for every $s\in \mathbb{R}$, it holds
\begin{eqnarray}
 \sum_{k=0}^{n}(-1)^{n-k} \;{n\choose k}\;\Big(e^{isT}\Big)^{*k} \mathcal{I}^{m}_{A}(T)\Big(e^{isT}\Big)^k &=&0.
\end{eqnarray}
\item If $T$ is skew $(A,(m, n))$-isosymmetric, then for every $s\in \mathbb{R}$, it holds
\begin{eqnarray*}
\sum_{k=0}^{n}(-1)^{n-k} \;{n\choose k}\;\Big(e^{isT^*}\Big)^{k} \mathcal{I}^{m}_{A}(T)\Big(e^{isT}\Big)^k&=&0.
\end{eqnarray*}
\end{enumerate}
\end{theorem}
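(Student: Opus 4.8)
The plan is to obtain both identities directly from Proposition \ref{lft1r} by forming an $n$-th order alternating binomial combination of its conclusion and invoking the elementary finite-difference identity
$$\sum_{k=0}^{n}(-1)^{n-k}\binom{n}{k}k^{h}=0\qquad(0\le h\le n-1),$$
which expresses that the $n$-th forward difference annihilates every polynomial in $k$ of degree strictly less than $n$ (equivalently, $\frac{d^{h}}{dt^{h}}(e^{t}-1)^{n}\big|_{t=0}=0$ for $h<n$).

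For part $(1)$, assume $T$ is $(A,(m,n))$-isosymmetric. By identity (\ref{lft1.0.0}) in Proposition \ref{lft1r}, for every $k\in\{0,1,\dots,n\}$ one has
$$\bigl(e^{isT}\bigr)^{*k}\mathcal{I}^{m}_{A}(T)\bigl(e^{isT}\bigr)^{k}=\sum_{h=0}^{n-1}\frac{(-is)^{h}}{h!}\,k^{h}\,\Omega_{A}^{m,h}(T)$$
(the case $k=0$ reducing to the trivial equality $\mathcal{I}^{m}_{A}(T)=\Omega_{A}^{m,0}(T)$). I would then multiply this by $(-1)^{n-k}\binom{n}{k}$, sum over $k$ from $0$ to $n$, and interchange the two finite summations to get
$$\sum_{k=0}^{n}(-1)^{n-k}\binom{n}{k}\bigl(e^{isT}\bigr)^{*k}\mathcal{I}^{m}_{A}(T)\bigl(e^{isT}\bigr)^{k}=\sum_{h=0}^{n-1}\frac{(-is)^{h}}{h!}\,\Omega_{A}^{m,h}(T)\Bigl(\sum_{k=0}^{n}(-1)^{n-k}\binom{n}{k}k^{h}\Bigr).$$
Since $h$ runs only up to $n-1$, every inner sum vanishes by the finite-difference identity, so the left-hand side is $0$; this is the assertion.

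Part $(2)$ is handled identically, using identity (\ref{lft1.0}) in place of (\ref{lft1.0.0}): when $T$ is skew $(A,(m,n))$-isosymmetric,
$$\bigl(e^{isT^{*}}\bigr)^{k}\mathcal{I}^{m}_{A}(T)\bigl(e^{isT}\bigr)^{k}=\sum_{h=0}^{n-1}\frac{(is)^{h}}{h!}\,k^{h}\,\Lambda_{A}^{m,h}(T),$$
and the same alternating binomial combination, together with $\sum_{k=0}^{n}(-1)^{n-k}\binom{n}{k}k^{h}=0$ for $0\le h\le n-1$, forces $\sum_{k=0}^{n}(-1)^{n-k}\binom{n}{k}\bigl(e^{isT^{*}}\bigr)^{k}\mathcal{I}^{m}_{A}(T)\bigl(e^{isT}\bigr)^{k}=0$.

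This argument involves no real obstacle: everything is a finite manipulation and the analytic content is already packaged in Proposition \ref{lft1r}. The only point that genuinely deserves care is the range of the index $h$ on the right-hand sides of (\ref{lft1.0.0}) and (\ref{lft1.0}) --- it is exactly $0\le h\le n-1$, so each monomial $k^{h}$ appearing there has degree strictly below $n$, which is precisely what allows the $n$-th difference operator $\sum_{k=0}^{n}(-1)^{n-k}\binom{n}{k}(\,\cdot\,)$ to annihilate it. One could alternatively bypass Proposition \ref{lft1r} altogether, expand the exponentials into power series, and reorganize the resulting triple sum, but that would merely re-derive the same identity.
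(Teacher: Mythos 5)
Your proof is correct and follows essentially the same route as the paper: both expand $\bigl(e^{isT}\bigr)^{*k}\mathcal{I}^{m}_{A}(T)\bigl(e^{isT}\bigr)^{k}$ via Proposition \ref{lft1r} as a polynomial in $k$ of degree at most $n-1$ with operator coefficients $\Omega_{A}^{m,h}(T)$ (resp.\ $\Lambda_{A}^{m,h}(T)$), then apply the $n$-th alternating binomial sum and kill each term with the identity $\sum_{k=0}^{n}(-1)^{n-k}\binom{n}{k}k^{h}=0$ for $0\le h\le n-1$. Your explicit handling of the $k=0$ term is a minor point of care the paper glosses over, but the argument is the same.
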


\begin{proof}
\begin{enumerate}
\item By using (\ref{lft1.0}) and taking into account the identity
\begin{equation}\label{fcv}
\dd\sum_{k=0}^{n}(-1)^{n-k} \;{n\choose k}\;k^{j}=
\left\{
\begin{array}{ll}
0, & \hbox{if $0\leq j\leq n-1$;} \\
n!, & \hbox{if $j=n$,}
\end{array}
\right.
\end{equation}
 it follows
\begin{eqnarray*}
&& \sum_{k=0}^{n}(-1)^{n-k} \;{n\choose k}\;\Big(e^{isT}\Big)^{*k} \mathcal{I}^{m}_{A}(T)\Big(e^{isT}\Big)^k \\
&=& \sum_{k=0}^{n}(-1)^{n-k} \;{n\choose k}\;\bigg\{\Omega_{A}^{m,0}(T)+(-is)k\Omega_{A}^{m,1}(T)+\frac{(-is)^2k^2}{2!}\Omega_{A}^{m,2}(T)+\frac{(-is)^3k^3}{3!}\Omega_{A}^{m,3}(T)+\cdots \\
&&+\cdots+\frac{(-is)^{n-1}k^{n-1}}{(n-1)!}\Omega_{A}^{m,n-1}(T)+\frac{(-is)^{n}k^{n}}{n!}\Omega_{A}^{m,n}(T)+\cdots\bigg\}\\
&=&\Omega_{A}^{m,0}(T)\bigg(\sum_{k=0}^{n}(-1)^{n-k} \;{n\choose k}\bigg)+(-is)\Omega_{A}^{m,1}(T)\bigg(\sum_{k=0}^{n}(-1)^{n-k} \;{n\choose k}\;k\bigg)\\
&&+\frac{(-is)^2}{2!}\Omega_{A}^{m,2}(T)\bigg(\sum_{k=0}^{n}(-1)^{n-k} \;{n\choose k}\;k^2\bigg)+\cdots+\\
&&+\cdots+\frac{(-is)^{n-1}}{(n-1)!}\Omega_{A}^{m,n-1}(T)\bigg(\sum_{k=0}^{n}(-1)^{n-k} \;{n\choose k}\;k^{n-1}\bigg)=0.
\end{eqnarray*}
\item By using (\ref{lft1.0}) and (\ref{fcv}), it follows
\begin{eqnarray*}
&& \sum_{k=0}^{n}(-1)^{n-k} \;{n\choose k}\;\Big(e^{isT^*}\Big)^{k} \mathcal{I}^{m}_{A}(T)\Big(e^{isT}\Big)^k \\
&=& \sum_{k=0}^{n}(-1)^{n-k} \;{n\choose k}\;\bigg\{\Lambda_{A}^{m,0}(T)+(is)k\Lambda_{A}^{m,1}(T)+\frac{(is)^2k^2}{2!}\Lambda_{A}^{m,2}(T)+\frac{(is)^3k^3}{3!}\Lambda_{A}^{m,3}(T)+\cdots \\
&&+\cdots+\frac{(is)^{n-1}k^{n-1}}{(n-1)!}\Lambda_{A}^{m,n-1}(T)+\frac{(is)^{n}k^{n}}{n!}\Lambda_{A}^{m,n}(T)+\cdots\bigg\}\\
&=&\Lambda_{A}^{m,0}(T)\bigg(\sum_{k=0}^{n}(-1)^{n-k} \;{n\choose k}\bigg)+(is)\Lambda_{A}^{m,1}(T)\bigg(\sum_{k=0}^{n}(-1)^{n-k} \;{n\choose k}\;k\bigg)\\
&&+\frac{(is)^2}{2!}\Lambda_{A}^{m,2}(T)\bigg(\sum_{k=0}^{n}(-1)^{n-k} \;{n\choose k}\;k^2\bigg)+\cdots+\\
&&+\cdots+\frac{(is)^{n-1}}{(n-1)!}\Lambda_{A}^{m,n-1}(T)\bigg(\sum_{k=0}^{n}(-1)^{n-k} \;{n\choose k}\;k^{n-1}\bigg)=0.
\end{eqnarray*}
\end{enumerate}
\end{proof}

The following corollary is an immediate consequence of Theorem \ref{leftinv}.
\begin{corollary}\label{cor3.4}
Let $A\in \mathcal{B}(H)^+$ and $T\in \mathcal{B}(H)$. Assume that $\mathcal{I}^{m}_{A}(T)\geq0$. Then, we have:
\begin{enumerate}
\item If $T$ is $(A,(m, n))$-isosymmetric, then for every $s\in \mathbb{R}$, the operator $e^{isT}$
 is $(\mathcal{I}^{m}_{A}(T),n)$-isometric. In particular, if $T$ is $(A,n)$-symmetric, then $e^{isT}$ is $(A,n)$-isometric.
\item If $T$ is skew $(A,(m, n))$-isosymmetric, then for every $s\in \mathbb{R}$, the operator $e^{isT}$
 is left $(\mathcal{I}^{m}_{A}(T),n)$-invertible. In particular, if $T$ is skew $(A,n)$-symmetric, then $T$ is left $(A,n)$-invertible with a left inverse $e^{isT^*}$.
\end{enumerate}
\end{corollary}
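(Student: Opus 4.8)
The plan is to derive both assertions directly from Theorem~\ref{leftinv}; the only content is to recognise the two operator identities produced there as the defining relations of an $(\,\cdot\,,n)$-isometry and of a left $(\,\cdot\,,n)$-inverse, with weight $\mathcal{I}^{m}_{A}(T)$, the hypothesis $\mathcal{I}^{m}_{A}(T)\ge 0$ being exactly what makes those notions meaningful.

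For item~(1): since $\mathcal{I}^{m}_{A}(T)\in\mathcal{B}(H)^{+}$, the operator $B:=\mathcal{I}^{m}_{A}(T)$ is an admissible positive weight. By definition an operator $N$ is $(B,n)$-isometric if and only if $\mathcal{I}^{n}_{B}(N)=\sum_{k=0}^{n}(-1)^{n-k}\binom{n}{k}N^{*k}BN^{k}=0$. I would take $N=e^{isT}$: the left-hand side is then precisely the operator that Theorem~\ref{leftinv}(1) asserts to vanish whenever $T$ is $(A,(m,n))$-isosymmetric. Hence $e^{isT}$ is $(\mathcal{I}^{m}_{A}(T),n)$-isometric for every $s\in\mathbb{R}$. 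For the ``in particular'' clause I would note that an $(A,n)$-symmetric operator is the same as an $(A;(0,n))$-isosymmetric one, and that $\mathcal{I}^{0}_{A}(T)=A\in\mathcal{B}(H)^{+}$; applying the case $m=0$ then gives that $e^{isT}$ is $(A,n)$-isometric.

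For item~(2): proceeding in the same way, Theorem~\ref{leftinv}(2) provides, for $T$ skew $(A,(m,n))$-isosymmetric, the identity $\sum_{k=0}^{n}(-1)^{n-k}\binom{n}{k}(e^{isT^{*}})^{k}\,\mathcal{I}^{m}_{A}(T)\,(e^{isT})^{k}=0$; by the definition of left $(B,n)$-invertibility this is exactly the statement that $e^{isT^{*}}$ is a left $(\mathcal{I}^{m}_{A}(T),n)$-inverse of $e^{isT}$, i.e.\ $e^{isT}$ is left $(\mathcal{I}^{m}_{A}(T),n)$-invertible. Specialising to $m=0$, where $\mathcal{I}^{0}_{A}(T)=A$, yields the ``in particular'' assertion that when $T$ is skew $(A,n)$-symmetric, $e^{isT^{*}}$ is a left $(A,n)$-inverse of $e^{isT}$.

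This argument is pure bookkeeping, so I do not anticipate a genuine obstacle. The two points that need attention are: first, one must actually invoke $\mathcal{I}^{m}_{A}(T)\ge 0$ before speaking of $(\mathcal{I}^{m}_{A}(T),n)$-isometries or of left $(\mathcal{I}^{m}_{A}(T),n)$-inverses, since both notions are only defined relative to a positive semi-definite weight; second, in the skew case one must keep straight that it is $e^{isT^{*}}$, and not $(e^{isT})^{*}=e^{-isT^{*}}$, that appears as the left inverse, because Theorem~\ref{leftinv}(2) is formulated with the factor $(e^{isT^{*}})^{k}$ rather than $(e^{isT})^{*k}$ --- this is precisely the asymmetry between the isosymmetric and skew-isosymmetric settings.
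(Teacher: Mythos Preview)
Your proposal is correct and matches the paper's approach exactly: the paper states that the corollary ``follows immediately from Theorem~\ref{leftinv}'' and gives no further proof, which is precisely the bookkeeping you carry out. Your additional remarks---that $\mathcal{I}^{m}_{A}(T)\ge 0$ is needed so that the notions of $(\mathcal{I}^{m}_{A}(T),n)$-isometry and left $(\mathcal{I}^{m}_{A}(T),n)$-inverse are well-defined, and the distinction between $e^{isT^{*}}$ and $(e^{isT})^{*}$ in the skew case---are apt clarifications that the paper leaves implicit.
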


\begin{theorem}\label{theo2.8}
Let $A\in \mathcal{B}(H)^+$ and $R,\,S \in \mathcal{B}(H)$ such that $RS=SR$. If $R$ is skew $(A,m)$-symmetric and $S$ is skew $(A,n)$-symmetric, then $e^{is(R+S)}$ is left $(A,m+n-1)$-invertible with left inverse $e^{is(R+S)^*}$.
\end{theorem}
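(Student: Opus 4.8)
The plan is to obtain this as a direct consequence of Proposition~\ref{dr1}, once the relevant exponentials are identified as left $(A,\cdot)$-inverses. Fix $s\in\mathbb R$. Since $RS=SR$, the operators $R$ and $S$ commute, so $e^{is(R+S)}=e^{isR}e^{isS}$; taking adjoints of $RS=SR$ gives $S^{*}R^{*}=R^{*}S^{*}$, hence $e^{isR^{*}}$ and $e^{isS^{*}}$ commute and $e^{is(R+S)^{*}}=e^{is(R^{*}+S^{*})}=e^{isR^{*}}e^{isS^{*}}$. These two factorizations are exactly what will let me splice together the two one-operator statements.

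First I would record the building blocks. The operator $R$ is skew $(A,m)$-symmetric, i.e. skew $(A,(0,m))$-isosymmetric, and $\mathcal I^{0}_{A}(R)=A$; so by the ``in particular'' part of Corollary~\ref{cor3.4}(2) (equivalently Theorem~\ref{leftinv}(2) with first index $0$), $e^{isR}$ is left $(A,m)$-invertible with left inverse $e^{isR^{*}}$, that is $\big\{(yx-1)^{m}\big\}_{\mathbf a}\big(e^{isR^{*}},e^{isR},A\big)=0$. In the same way, $S$ being skew $(A,n)$-symmetric gives that $e^{isS}$ is left $(A,n)$-invertible with left inverse $e^{isS^{*}}$.

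Then I would apply Proposition~\ref{dr1} with
\[
R_{1}=e^{isR^{*}},\qquad R_{2}=e^{isS^{*}},\qquad S_{1}=e^{isR},\qquad S_{2}=e^{isS}.
\]
Its hypotheses are met: $S_{1}S_{2}=S_{2}S_{1}$ because $RS=SR$, $R_{1}R_{2}=R_{2}R_{1}$ because $R^{*}S^{*}=S^{*}R^{*}$, and by the previous paragraph $R_{1}$ is a left $(A,m)$-inverse of $S_{1}$ while $R_{2}$ is a left $(A,n)$-inverse of $S_{2}$. The proposition then yields that $R_{1}R_{2}$ is a left $(A,m+n-1)$-inverse of $S_{1}S_{2}$. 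Since $R_{1}R_{2}=e^{isR^{*}}e^{isS^{*}}=e^{is(R+S)^{*}}$ and $S_{1}S_{2}=e^{isR}e^{isS}=e^{is(R+S)}$, this says precisely that $e^{is(R+S)}$ is left $(A,m+n-1)$-invertible with left inverse $e^{is(R+S)^{*}}$, as claimed.

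I do not anticipate a genuine obstacle: the argument is essentially a concatenation of Corollary~\ref{cor3.4}(2) and Proposition~\ref{dr1}. The only points that need care are the bookkeeping of the commutation relations underlying the two exponential splittings, and reading off from Theorem~\ref{leftinv}(2) that the left inverse supplied there occupies the ``$y$-slot'' as the operator $e^{isR^{*}}$ (resp. $e^{isS^{*}}$) rather than as the Hilbert-space adjoint $(e^{isR})^{*}=e^{-isR^{*}}$; keeping those straight, the two left invertibilities feed directly into Proposition~\ref{dr1}.
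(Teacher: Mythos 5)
Your proof is correct and follows essentially the same route as the paper's: it invokes Corollary \ref{cor3.4}(2) to show that $e^{isR}$ and $e^{isS}$ are left $(A,m)$- and $(A,n)$-invertible with left inverses $e^{isR^*}$ and $e^{isS^*}$, and then applies Proposition \ref{dr1} to the commuting products. Your additional care in verifying the commutation relations and the exponential factorizations is a welcome refinement of the same argument.
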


\begin{proof}
Since $R$ is skew $(A,m)$-symmetric and $S$ is skew $(A,n)$-symmetric, it holds from Corollary \ref{cor3.4} that $S_1 =e^{isR}$ is is left $(A,m)$-invertible with left inverse $R_1=e^{isR^*}$ and $S_2 =e^{isS}$ is left $(A,n)$-invertible with left inverse $R_2=e^{isS^*},$ for every $s\in \mathbb{R}$. It is clear that $R_1R_2 = R_2R_1$ and $S_1S_2 = S_2S_1$. Applying Proposition \ref{dr1}, we deduce that $S_1S_2=e^{is(S+R)}$ is left $(A,m+n-1 )$-invertible with left inverse $R_1R_2=e^{is(R+S)^*}.$

\end{proof}

In \cite{a.1}, we have introduced the following algebraic condition on $T\in \mathcal{B}(H)$
\begin{equation}\label{rf}
\mathbf{\big( POL_{n}(A)\big)}:\quad (e^{isT })^*A\,(e^{isT})=A+\sum^{n}_{l=1}s^{l}B_{l},\quad B_{l}:=B_{l}(A,T ,T^*)\in \mathcal{B}(H).
\end{equation}
 Note that an induction argument gives
\begin{equation*}
\frac{d^{n+1}}{ds^{n+1}}(e^{isT })^*A\,(e^{isT})=(-i)^{n+1}(e^{isT })^*\,\mathcal{S}^{n+1}_{A}(T)\;(e^{isT}).
\end{equation*}
The $\mathbf{\big( POL_{n}(A)\big)}$ condition is equivalent to $\frac{d^{n+1}}{ds^{n+1}}(e^{isT })^*A\,(e^{isT})=0$ and consequently to $\mathcal{S}^{n+1}_{A}(T)=0$. Hence, $T\in \mathcal{B}(\mathbb{H})$ is $(A,n)$-symmetric if it satisfies $\mathbf{\big( POL_{n-1}(A)\big)}.$\\

\noindent Inspired by (\ref{rf}), we introduce
\begin{equation}\label{vv.0}
\mathbf{\big(IPOL_{n}(A)\big)}:\quad (e^{isT })^*\mathcal{I}^{m}_{A}(T)\,(e^{isT})=\Omega_{A}^{m,0}(T)+\sum^{n}_{l=1}s^{l}C_{l},\quad C_{l}:=C_{l}(A,T ,T^*)\in \mathcal{B}(H).
\end{equation}
For $m\in \mathbb{N}$, we obtain
\begin{equation*}
\frac{d^{n+1}}{ds^{n+1}}(e^{isT })^*\mathcal{I}^{m}_{A}(T)\,(e^{isT})=(-i)^{n+1}(e^{isT })^*\,\Omega_{A}^{m,n+1}(T)\;(e^{isT}).
\end{equation*}
Moreover,
\begin{eqnarray}\label{amie}
\Big(e^{isT}\Big)^*\mathcal{I}^{m}_{A}(T)\Big(e^{isT}\Big)
&=&\sum_{h\geq0}\frac{s^{h}}{h!}\bigg(\frac{d^{h}}{ds^{h}}(e^{isT })^*\mathcal{I}^{m}_{A}(T)\,(e^{isT})_{\big|_{s=0}}\bigg) \nonumber\\
&=&\sum_{h\geq0}\frac{(-is)^{h}}{h!}\;\Omega_{A}^{m,h}(T)\nonumber.
\end{eqnarray}
Referring to Proposition \ref{lft1r} and applying the identity (\ref{lft1.0.0}) for $k=1$, we deduce that $\mathbf{\big(IPOL_{n}(A)\big)}$ is equivalent to $\frac{d^{n+1}}{ds^{n+1}}(e^{isT })^*\mathcal{I}^{m}_{A}(T)\,(e^{isT})=0$ and consequently to $\Omega_{A}^{m,n+1}(T)=0$.

\begin{remark}For $A\in \mathcal{B}(H)^+$ and $T\in \mathcal{B}(H)$, $T$ is $(A,(m, n))$-isosymmetric if and only if $T$ satisfies $\mathbf{\big( IPOL_{n-1}(A)\big)}.$
\end{remark}

\begin{theorem}\label{n-1}
Let $A\in \mathcal{B}(H)^+$ and $T\in \mathcal{B}(H)$. Then the following properties hold.
\begin{enumerate}
\item If $m$ is even, then every invertible $(A;(m, n))$-isosymmetric operator $T$ satisfying $\mathcal{S}^{n}_{A}(T)\geq0$, is $(A;(m-1, n))$-isosymmetric.
\item If $n$ is even, then every $(A,(m, n))$-isosymmetric operator $T$ satisfying $\mathcal{I}^{m}_{A}(T)\geq0$ is $(A,(m, n-1))$-isosymmetric.
\end{enumerate}
\end{theorem}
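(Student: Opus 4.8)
The plan is to deduce both parts from known \emph{order‑reduction} results — for $(A,m)$-isometries in (1) and for $(A,n)$-symmetries in (2) — after rewriting the condition $T\in(A;(m,n))$ through the two equivalent forms of $\Omega_A^{m,n}(T)$. The starting point is the purely algebraic observation, immediate from the definitions of $\Omega$, $\mathcal I$ and $\mathcal S$:
$$\Omega_A^{m,n}(T)=\mathcal I^{m}_{\mathcal S^{n}_A(T)}(T)=\mathcal S^{n}_{\mathcal I^{m}_A(T)}(T),\qquad \Omega_A^{m-1,n}(T)=\mathcal I^{m-1}_{\mathcal S^{n}_A(T)}(T),\qquad \Omega_A^{m,n-1}(T)=\mathcal S^{n-1}_{\mathcal I^{m}_A(T)}(T).$$

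For (2) I would set $C:=\mathcal I^{m}_A(T)$, which lies in $\mathcal B(H)^+$ precisely because of the hypothesis $\mathcal I^{m}_A(T)\ge 0$. By the identities above, $T\in(A;(m,n))$ is exactly $\mathcal S^{n}_C(T)=0$, i.e. $T$ is $(C,n)$-symmetric. Since $n$ is even, the reduction theorem for higher order symmetries, \cite[Theorem~2.5]{a.1} (the same result already used in the corollary following Proposition~\ref{waw}), applied with $A$ replaced by the positive operator $C$, yields $\mathcal S^{n-1}_C(T)=0$; as $\mathcal S^{n-1}_C(T)=\Omega_A^{m,n-1}(T)$, this is precisely $T\in(A;(m,n-1))$.

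For (1) the case $n$ odd is degenerate: reindexing the defining sum and using $A^*=A$ gives $\big(\mathcal S^{n}_A(T)\big)^*=(-1)^n\mathcal S^{n}_A(T)=-\mathcal S^{n}_A(T)$, and a skew‑adjoint operator which is $\ge 0$ must vanish, so $\mathcal S^{n}_A(T)=0$; then $T\in(A;(0,n))$ and $T\in(A;(m-1,n))$ follows from Proposition~\ref{waw}(2). So assume $n$ even and put $B:=\mathcal S^{n}_A(T)$, which is now selfadjoint and, by hypothesis, $\ge 0$. By the identities above, $T\in(A;(m,n))\Longleftrightarrow \mathcal I^{m}_B(T)=0$ (i.e. $T$ is a $(B,m)$-isometry) and $T\in(A;(m-1,n))\Longleftrightarrow \mathcal I^{m-1}_B(T)=0$, so it suffices to show that an \emph{invertible} $(B,m)$-isometry with $m$ even is a $(B,m-1)$-isometry. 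For this I would run the Agler--Stankus polynomial argument in the $B$-seminorm: fix $x\in H$ and set $q_x(p):=\langle BT^{p}x\,|\,T^{p}x\rangle$ for $p\in\mathbb Z$ (well defined since $T$ is invertible, and $\ge 0$ since $B\ge 0$). From $\mathcal I^{m}_B(T)=0$ we get $(T^{*})^{l}\mathcal I^{m}_B(T)T^{l}=0$ for every $l\in\mathbb Z$, which unwinds to $\sum_{k=0}^{m}(-1)^{m-k}\binom{m}{k}q_x(l+k)=0$ for all $l\in\mathbb Z$; hence the $m$‑th forward difference of $q_x$ vanishes on $\mathbb Z$, so $q_x$ agrees on $\mathbb Z$ with a polynomial of degree $\le m-1$. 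A polynomial that is nonnegative at every integer cannot have odd degree, so — $m-1$ being odd — this polynomial has degree $\le m-2$; therefore its $(m-1)$‑st difference is identically $0$, and evaluating it at $0$ gives $\langle \mathcal I^{m-1}_B(T)x\,|\,x\rangle=0$. Since $x$ is arbitrary and $\mathcal I^{m-1}_B(T)$ is selfadjoint ($B$ being selfadjoint), $\mathcal I^{m-1}_B(T)=0$, i.e. $T\in(A;(m-1,n))$.

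The only substantive step — and the one place where invertibility of $T$ is genuinely used — is upgrading the vanishing of $\Delta^{m}q_x$ from $\mathbb N$ to all of $\mathbb Z$: on $\mathbb N$ alone the interpolating polynomial could perfectly well have odd degree (e.g.\ be affine), and it is precisely the two‑sided behaviour that makes nonnegativity force the degree down by one. Everything else is routine: the two forms of $\Omega_A^{m,n}$, the elementary fact that the kernel of $\Delta^{m}$ on functions $\mathbb Z\to\mathbb R$ consists of polynomials of degree $<m$, and the cited reduction theorem for $(A,n)$-symmetries.
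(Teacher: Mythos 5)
Your proof is correct, but it departs from the paper's route in both halves, and in each case in an instructive way. For part (2) the paper does not invoke the external reduction theorem for higher-order symmetries: it expands $(e^{isT})^*\mathcal{I}^{m}_{A}(T)\,e^{isT}$ as an operator polynomial in $s$ of degree at most $n-1$ (Proposition \ref{lft1r} with $k=1$), observes that positivity of $\mathcal{I}^{m}_{A}(T)$ makes this polynomial nonnegative for all real $s$, and kills the odd-degree leading coefficient $\frac{(-i)^{n-1}}{(n-1)!}\Omega_{A}^{m,n-1}(T)$ by letting $s\to\pm\infty$ --- that is, it reproves the symmetric reduction in the case at hand rather than citing it. Your one-line reduction to $\mathcal{S}^{n}_{C}(T)=0$ with $C=\mathcal{I}^{m}_{A}(T)\geq 0$ is legitimate, with one caveat: you must cite the external result \cite[Theorem 2.5]{a.1} (as the paper itself does elsewhere) and not Corollary \ref{ddf} of this paper, since the latter is deduced from the very theorem you are proving. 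For part (1) the paper conjugates $\Omega_{A}^{m-1,n}$ by $T^{-1}$, uses the sign flip $(-1)^{m-1}$ for $m-1$ odd, and combines the two positivity assertions $\Omega_{A}^{m-1,n}(T)\geq 0$ and $\Omega_{A}^{m-1,n}(T^{-1})\geq 0$, which it states without justification; your two-sided difference-calculus argument on $q_{x}(p)=\langle BT^{p}x\mid T^{p}x\rangle$, $p\in\mathbb{Z}$, is the same idea unpacked --- the negative integers play exactly the role of $T^{-1}$ --- but it is self-contained and actually supplies the missing justification. Your treatment of the degenerate case $n$ odd (skew-adjointness of $\mathcal{S}^{n}_{A}(T)$ forces it to vanish under the positivity hypothesis) is likewise a point the paper glosses over.
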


\begin{proof}
\begin{enumerate}
\item If $\mathcal{S}^{n}_{A}(T)\geq0$ for some positive integer $n$, then $T$ is $(A;(m, n))$-isosymmetric
if and only if $T$ is $(\mathcal{S}^{n}_{A}(T),m)$-isometric. Note that if $T$ is invertible, then $\mathcal{S}^{n}_{A}(T^{-1})\geq0$ and, consequently, $T^{-1}$ is $(A;(m, n))$-isosymmetric if and only if $T^{-1}$ is $(\mathcal{S}^{n}_{A}(T^{-1}),m)$-isometric. It follows from that $\Omega_{A}^{m-1,n}(T)\geq0$ and $\Omega_{A}^{m-1,n}(T^{-1})\geq0.$ Hence, the following holds
\begin{equation}\label{(2.17.00)}
0\leq(T^{-1})^{*m-1+n}\Omega_{A}^{m-1,n}(T)(T^{-1})^{m-1+n}=\left\{
  \begin{array}{ll}
    \Omega_{T^{-1}}^{m-1,n}(A), & \hbox{if $m-1$ is even;} \\
    -\Omega_{T^{-1}}^{m-1,n}(A), & \hbox{if $m-1$ is odd.}
  \end{array}
\right.
\end{equation}
Since $m$ is an even number, $m-1$ is odd and (\ref{(2.17.00)}) gives $\Omega_{A}^{m-1,n}(T)\leq0$. Therefore, we have $\Omega_{A}^{m-1,n}(T)=0.$ Hence, $T$ is $(A;(m-1, n))$-isosymmetric.
\item Since $T$ is $(A,(m, n))$-isosymmetric, it holds for $n$ even $($that is $n=2k)$,
$$\Big(e^{isT}\Big)^*\mathcal{I}^{m}_{A}(T)\Big(e^{isT}\Big)=\sum_{h=0}^{2k-1}\frac{(-is)^{h}}{h!}\Omega_{A}^{m,h}(T).$$
The positivity of $\mathcal{I}^{m}_{A}(T)$ implies
\begin{eqnarray*}
0&\leq&\frac{\Big(e^{isT}\Big)^*\mathcal{I}^{m}_{A}(T)\Big(e^{isT}\Big)}{s^{2k-1}}=\frac{1}{s^{2k-1}}\Omega_{A}^{m,0}(T)+\sum_{h=0}^{2k-2}\frac{(-is)^{h}}{h!}
\Omega_{A}^{m,h}(T)\\
&&+\frac{(-i)^{2k-1}}{(2k-1)!}\Omega_{A}^{m,2k-1}(T)\longrightarrow \frac{(-i)^{2k-1}}{(2k-1)!}\Omega_{A}^{m,2k-1}(T) \qquad(s\longrightarrow+\infty).
\end{eqnarray*}
On the other hand, we have
\begin{eqnarray*}
0&\leq&\frac{-\Big(e^{isT}\Big)^*\mathcal{I}^{m}_{A}(T)\Big(e^{isT}\Big)}{s^{2k-1}}=\frac{1}{s^{2k-1}}\big(-\Omega_{A}^{m,0}(T)\big)+\sum_{h=0}^{2k-2}\frac{(-is)^{h}}{h!}
\big(-\Omega_{A}^{m,h}(T)\big)\\
&&+\frac{(-i)^{2k-1}}{(2k-1)!}\big(-\Omega_{A}^{m,2k-1}(T)\big)\longrightarrow \frac{(-i)^{2k-1}}{(2k-1)!}\big(-\Omega_{A}^{m,2k-1}(T)\big) \qquad(s\longrightarrow-\infty).
\end{eqnarray*}
Hence, $\Omega_{A}^{m,2k-1}(T)=0$, which implies that $T$ is $(A,(m, 2k-1))$-isosymmetric.
\end{enumerate}
\end{proof}

\noindent The following corollary follows immediately from Theorem \ref{n-1} and summarizes few known facts.
\begin{corollary}\label{ddf}
Let $A\in \mathcal{B}(H)^+$ and $T\in \mathcal{B}(H)$. Then, the following properties hold.
\begin{enumerate}
\item If $m$ is even, then every invertible $(A,m)$-isometric operator is $(A,m-1)$-isometric.
\item If $n$ is even, then every $(A, n)$-symmetric operator is $(A, n-1)$-symmetric.
\end{enumerate}
\end{corollary}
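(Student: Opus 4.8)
The plan is to derive both assertions directly from Theorem \ref{n-1}, treating $(A,m)$-isometries and $(A,n)$-symmetries as the special cases $(A,(m,0))$ and $(A,(0,n))$ of $(A,(m,n))$-isosymmetries, as recorded in the definitions following \eqref{s}. For part (1), suppose $T$ is an invertible $(A,m)$-isometry with $m$ even. By definition $T \in (A;(m,0))$, so $\mathcal{S}^{0}_{A}(T) = A \geq 0$; thus the hypothesis $\mathcal{S}^{n}_{A}(T) \geq 0$ of Theorem \ref{n-1}(1) is satisfied with $n = 0$. Since $m$ is even, Theorem \ref{n-1}(1) applies verbatim and yields that $T$ is $(A;(m-1,0))$-isosymmetric, i.e. $(A,m-1)$-isometric. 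For part (2), suppose $T$ is an $(A,n)$-symmetry with $n$ even. Then $T \in (A;(0,n))$, so $\mathcal{I}^{0}_{A}(T) = A \geq 0$, which is exactly the hypothesis $\mathcal{I}^{m}_{A}(T) \geq 0$ of Theorem \ref{n-1}(2) with $m = 0$. Since $n$ is even, Theorem \ref{n-1}(2) gives that $T$ is $(A;(0,n-1))$-isosymmetric, i.e. $(A,n-1)$-symmetric. This completes both parts.

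The only genuine checks are the two positivity conditions, and both are immediate: the "ground level" quantities $\mathcal{S}^{0}_{A}(T) = A$ and $\mathcal{I}^{0}_{A}(T) = A$ are positive by the standing assumption $A \in \mathcal{B}(H)^+$, so no new work beyond invoking Theorem \ref{n-1} is needed. I would also note explicitly that in part (1) the extra invertibility hypothesis is inherited directly from the statement, and in part (2) no invertibility is required — matching the asymmetry between the two parts of Theorem \ref{n-1}. I do not anticipate any obstacle; if anything the subtle point is purely bookkeeping, namely remembering that the parameter playing the role of "$n$" in Theorem \ref{n-1}(1) is $0$ here (not $m$), and symmetrically the "$m$" in Theorem \ref{n-1}(2) is $0$. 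One short line recording each reduction to Theorem \ref{n-1} suffices.
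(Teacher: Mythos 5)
Your proposal is correct and is exactly the reduction the paper intends: the paper gives no separate proof, stating only that the corollary "follows immediately from Theorem \ref{n-1}," and your specializations $n=0$ in part (1) and $m=0$ in part (2), with the positivity hypotheses discharged by $\mathcal{S}^{0}_{A}(T)=\mathcal{I}^{0}_{A}(T)=A\geq 0$, are the right way to make that immediate deduction explicit.
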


\noindent Let consider, now,
\begin{equation}\label{rfklm}
\widetilde{\mathbf{\big(POL_{n}(A)\big)}}:\quad (e^{isT^* })A\,(e^{isT})=A+\sum^{n}_{l=1}s^{l}D_{l},\quad D_{l}:=D_{l}(A,T ,T^*)\in \mathcal{B}(H).
\end{equation}
Note that
\begin{equation*}
\frac{d^{n}}{ds^{n}}(e^{isT^* })A\,(e^{isT})=i^{n}e^{isT^*}\,\zeta^{n}_{T}(A)\;e^{isT},\quad n\in \mathbb{N}.
\end{equation*}
The $\widetilde{\mathbf{\big(POL_{n}(A)\big)}}$ condition is equivalent to $\frac{d^{n+1}}{ds^{n+1}}\big(e^{isT^*}A\,e^{isT}\big)=0$ and consequently to $\zeta^{n+1}_{T}(A)=0$. Hence, $T\in \mathcal{B}(H)$ is skew $(A,n)$-symmetric if it satisfies $\widetilde{\mathbf{\big(POL_{n-1}(A)\big)}}.$\\

\noindent Inspired by (\ref{rfklm}), we introduce
\begin{equation}\label{Irfklm}
\widetilde{\mathbf{\big(IPOL_{n}(A)\big)}}:\quad (e^{isT^* })\mathcal{I}^{m}_{A}(T)\,(e^{isT})=\Lambda_{A}^{m,0}(T)+\sum^{n}_{l=1}s^{l}E_{l},\quad E_{l}:=E_{l}(A,T ,T^*)\in \mathcal{B}(H).
\end{equation}
Referring to Proposition \ref{lft1r}, $T$ is skew $(A,(m, n))$-isosymmetric if and only if
\begin{eqnarray*}
\Big(e^{isT^*}\Big)\mathcal{I}^{m}_{A}(T)\Big(e^{isT}\Big)&=&\sum_{h=0}^{n-1}\frac{s^{h}}{h!}\bigg(\frac{d^{h}}{ds^{h}}(e^{isT^* })\mathcal{I}^{m}_{A}(T)\,(e^{isT})_{\big|_{s=0}}\bigg)\\
&=&\sum_{h=0}^{n-1}\frac{(is)^{h}}{h!}\Lambda_{A}^{m,h}(T),
\end{eqnarray*}
that is $T$ satisfies $\widetilde{\mathbf{\big(IPOL_{n-1}(A)\big)}}.$\\

\noindent In Corollary \ref{ddf} (\cite[Theorem 2.5.]{a.2}), we have shown that if $T$ is $(A,n)$-symmetric and $n$ is even, then $T$ is always $(A,n-1)$-symmetric. Under suitable conditions, we extend this property to skew $(A,m)$-symmetric operators, and more generally to skew $(A,(m, n))$-isosymmetries. For $A\in \mathcal{B}(H)^+$ and $T \in \mathcal{B}(H)$, let consider the following hypothesis:
\begin{enumerate}
\item $(H_1):\dd\sum_{k=0}^{n-1}(-1)^{n-1-k} \;{n-1\choose k}\;\Big(e^{isT^*}\Big)^k\mathcal{I}^{m}_{A}(T)\Big(e^{isT}\Big)^k \geq0.$
\item $(H_2):\dd\sum_{k=0}^{n-1}(-1)^{n-1-k} {n-1\choose k}\Big(e^{isT^*}\Big)^{n-1-k}\mathcal{I}^{m}_{A}(T)\Big(e^{isT}\Big)^{n-1-k}\geq0.$
\end{enumerate}
\begin{proposition}\label{powx}
Let $A\in \mathcal{B}(H)^+$ and $T \in \mathcal{B}(H)$. Assume $(H_1)$ and $(H_2)$ are fulfilled and
$$\sum_{k=0}^{n}(-1)^{n-k} \;{n\choose k}\;\Big(e^{isT^*}\Big)^{k} \mathcal{I}^{m}_{A}(T)\Big(e^{isT}\Big)^k=0.$$
If $n$ is even, then it holds
$$\sum_{k=0}^{n-1}(-1)^{n-1-k} \;{n-1\choose k}\;\Big(e^{isT^*}\Big)^{k} \mathcal{I}^{m}_{A}(T)\Big(e^{isT}\Big)^k=0.$$
\end{proposition}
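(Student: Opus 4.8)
The strategy mirrors the proof of part (2) of Theorem~\ref{n-1}, replacing the single exponential $e^{isT}$ by the finite-difference combination of powers of $e^{isT}$. Set
\begin{equation*}
F(s):=\sum_{k=0}^{n-1}(-1)^{n-1-k}\binom{n-1}{k}\Big(e^{isT^*}\Big)^{k}\mathcal{I}^{m}_{A}(T)\Big(e^{isT}\Big)^{k},
\end{equation*}
so that $(H_1)$ says $F(s)\geq 0$ for all $s\in\mathbb{R}$, while $(H_2)$ gives the analogous positivity after the index substitution $k\mapsto n-1-k$. The hypothesis
$\sum_{k=0}^{n}(-1)^{n-k}\binom{n}{k}(e^{isT^*})^{k}\mathcal{I}^{m}_{A}(T)(e^{isT})^{k}=0$ together with the Pascal identity $\binom{n}{k}=\binom{n-1}{k}+\binom{n-1}{k-1}$ lets me split that vanishing sum and reindex, so as to express the degree-$n$ combination in terms of $F(s)$ and a shifted copy of it; concretely one obtains a relation of the form $(e^{isT^*})\,\big(\text{shifted } F\big)\,(e^{isT}) = \pm F(s)$, linking the two positivity hypotheses.

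\textbf{Key steps.} First I would use Proposition~\ref{lft1r}(2) (the expansion \eqref{lft1.0}) to write $F(s)$ as a polynomial in $s$: since $T$ is skew $(A,(m,n))$-isosymmetric, each $(e^{isT^*})^{k}\mathcal{I}^{m}_{A}(T)(e^{isT})^{k}=\sum_{h=0}^{n-1}\frac{(isk)^h}{h!}\Lambda_{A}^{m,h}(T)$, and applying the identity \eqref{fcv} ($\sum_{k}(-1)^{n-1-k}\binom{n-1}{k}k^h$ vanishes for $0\le h\le n-2$ and equals $(n-1)!$ for $h=n-1$) collapses $F(s)$ to the single surviving term
\begin{equation*}
F(s)=\frac{(is)^{n-1}}{(n-1)!}\,(n-1)!\,\Lambda_{A}^{m,n-1}(T)=(is)^{n-1}\Lambda_{A}^{m,n-1}(T).
\end{equation*}
Because $n$ is even, $n-1$ is odd, so $(is)^{n-1}=i^{n-1}s^{n-1}$ with $i^{n-1}=\pm i$; thus $F(s)=\pm i\,s^{n-1}\Lambda_{A}^{m,n-1}(T)$. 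Now hypothesis $(H_1)$ forces $\pm i\,s^{n-1}\Lambda_{A}^{m,n-1}(T)\ge 0$ for every real $s$; letting $s\to+\infty$ and $s\to-\infty$ (and using that $s^{n-1}$ changes sign since $n-1$ is odd) gives both $\pm i\,\Lambda_{A}^{m,n-1}(T)\ge 0$ and $\pm i\,\Lambda_{A}^{m,n-1}(T)\le 0$, whence $\Lambda_{A}^{m,n-1}(T)=0$. Substituting back, $F(s)=0$ for all $s$, which is exactly the claimed identity. (The hypothesis $(H_2)$ and the link between the two sums serves, if one prefers, as the alternative route when $i^{n-1}$ carries the ``wrong'' sign for a direct limit argument, ensuring the squeeze works in either case.)

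\textbf{Main obstacle.} The only delicate point is the bookkeeping with the sign $i^{n-1}=\pm i$ and making the limiting argument rigorous: one must be careful that $F(s)$ is genuinely a \emph{scalar multiple} of the single operator $\Lambda_{A}^{m,n-1}(T)$ (so that $F(s)\ge 0$ for all $s$ can only mean that operator is $0$), rather than a sum of several operator coefficients whose positivity for all $s$ would not individually follow. The collapse via \eqref{fcv} is what guarantees this, so verifying that \eqref{lft1.0} applies with $k$ running over $0,\dots,n-1$ and that the polynomial degree is exactly $n-1$ is the crux; everything else is the same $s\to\pm\infty$ squeeze already used in Theorem~\ref{n-1}(2). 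I expect no difficulty beyond this, since $(H_1)$ and $(H_2)$ are tailored precisely to supply the needed one-sided bounds.
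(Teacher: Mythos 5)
Your argument is correct in substance but takes a genuinely different route from the paper, and it contains one hypothesis-level gap you should close. You invoke the expansion $(e^{isT^*})^{k}\mathcal{I}^{m}_{A}(T)(e^{isT})^{k}=\sum_{h=0}^{n-1}\frac{(isk)^{h}}{h!}\Lambda_{A}^{m,h}(T)$ ``since $T$ is skew $(A,(m,n))$-isosymmetric'', but skew $(A,(m,n))$-isosymmetry is not among the stated hypotheses of Proposition~\ref{powx}; the only analytic hypothesis is the vanishing of the $n$-th order exponential sum. That vanishing \emph{does} imply $\Lambda_{A}^{m,h}(T)=0$ for all $h\geq n$ (expand $e^{isT^*}\mathcal{I}^{m}_{A}(T)e^{isT}$ as the full series $\sum_{h\geq0}\frac{(is)^{h}}{h!}\Lambda_{A}^{m,h}(T)$, take the $n$-th finite difference in $k$, and note that the coefficient of $s^{h}$ in the result is a nonzero multiple of $\Lambda_{A}^{m,h}(T)$ for every $h\geq n$, so each must vanish), after which your truncation to $h\leq n-1$ is legitimate; but this derivation has to be written out, or else the proposition must be read with the additional hypothesis that $T$ is skew $(A,(m,n))$-isosymmetric, which is indeed what holds in its only application, Theorem~\ref{theo2.7}.

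Once that is supplied, your computation is sound: \eqref{fcv} with $n$ replaced by $n-1$ collapses the $(H_1)$ sum to the single term $(is)^{n-1}\Lambda_{A}^{m,n-1}(T)$, and since $n-1$ is odd, positivity at both positive and negative $s$ forces $\Lambda_{A}^{m,n-1}(T)=0$, hence the sum vanishes identically; note that $(H_2)$ is then never used. The paper argues quite differently: it conjugates the $(H_1)$ sum taken at parameter $-s$ by $(e^{isT^*})^{n-1}$ and $(e^{isT})^{n-1}$, reindexes $k\mapsto n-1-k$, and uses $(-1)^{n-1-k}=-(-1)^{k}$ (valid because $n$ is even) to identify the result with the negative of the $(H_1)$ sum at $s$; combining the positivity coming from $(H_1)$ with that of $(H_2)$ then sandwiches the sum between $0$ and $0$. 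The paper's route is purely algebraic, needs both positivity hypotheses, and uses no series expansion or limiting argument; yours is shorter at the price of the extra analytic input above, and it yields the stronger conclusion $\Lambda_{A}^{m,n-1}(T)=0$ directly (essentially proving Theorem~\ref{theo2.7} in one stroke). Finally, your opening ``Pascal identity'' plan only produces the relation $e^{isT^*}F(s)e^{isT}=F(s)$ with a plus sign, which does not by itself give $F=0$, so it is right that you abandoned it in favor of the expansion argument.
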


\begin{proof}Since $n$ is even, we have $(-1)^k = -(-1)^{n-1-k}$. Then, we obtain
\begin{eqnarray*}
&&\Big(e^{isT^*}\Big)^{n-1}\bigg\{\sum_{k=0}^{n-1}(-1)^{n-1-k} \;{n-1\choose k}\;\Big(e^{-isT^*}\Big)^{k} \mathcal{I}^{m}_{A}(T)\Big(e^{-isT}\Big)^k\bigg\}\Big(e^{isT}\Big)^{n-1}\\
&=&\sum_{k=0}^{n-1}(-1)^{n-1-k} \;{n-1\choose k}\;\Big(e^{isT^*}\Big)^{n-1-k} \mathcal{I}^{m}_{A}(T) \Big(e^{isT}\Big)^{n-1-k}\\
&=&-\sum_{k=0}^{n-1}(-1)^{k} \;{n-1\choose k}\;\Big(e^{isT^*}\Big)^{n-1-k} \mathcal{I}^{m}_{A}(T) \Big(e^{isT}\Big)^{n-1-k}\geq0.
\end{eqnarray*}
This completes the proof.

\end{proof}

\begin{theorem}\label{theo2.7}
Let $A\in \mathcal{B}(H)^+$ and $T \in \mathcal{B}(H)$. Assume $(H_1)$ and $(H_2)$ are fulfilled. If $T$ is skew $(A,(m, n))$-isosymmetric where $n$ is even, then $T$ is skew $(A,(m, n-1))$-isosymmetric.
\end{theorem}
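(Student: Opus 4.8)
The plan is to reduce the statement to Proposition \ref{powx} by exploiting the exponential characterization of skew $(A,(m,n))$-isosymmetries from Proposition \ref{lft1r}. First I would recall that, by the second item of Proposition \ref{lft1r}, the hypothesis that $T$ is skew $(A,(m,n))$-isosymmetric is equivalent to the identity
\begin{equation*}
\Big(e^{isT^*}\Big)^{k}\mathcal{I}^{m}_{A}(T)\Big(e^{isT}\Big)^{k}=\sum_{h=0}^{n-1}\frac{(isk)^{h}}{h!}\Lambda_{A}^{m,h}(T),\qquad k\in \mathbb{N},\ s\in \mathbb{R}.
\end{equation*}
Then, applying the finite-difference identity (\ref{fcv}) (i.e. $\sum_{k=0}^{n}(-1)^{n-k}\binom{n}{k}k^{j}$ vanishes for $0\le j\le n-1$), exactly as in the proof of Theorem \ref{leftinv}(2), I would obtain
\begin{equation*}
\sum_{k=0}^{n}(-1)^{n-k} \;{n\choose k}\;\Big(e^{isT^*}\Big)^{k} \mathcal{I}^{m}_{A}(T)\Big(e^{isT}\Big)^k=0
\end{equation*}
for every $s\in \mathbb{R}$. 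This is precisely the hypothesis of Proposition \ref{powx}.

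Next, since $n$ is even and $(H_1)$, $(H_2)$ are assumed to hold, Proposition \ref{powx} applies verbatim and yields
\begin{equation*}
\sum_{k=0}^{n-1}(-1)^{n-1-k} \;{n-1\choose k}\;\Big(e^{isT^*}\Big)^{k} \mathcal{I}^{m}_{A}(T)\Big(e^{isT}\Big)^k=0,\qquad s\in \mathbb{R}.
\end{equation*}
The remaining task is to translate this back into a statement about $\Lambda_{A}^{m,n-1}(T)$. Using the expansion of $\big(e^{isT^*}\big)^{k}\mathcal{I}^{m}_{A}(T)\big(e^{isT}\big)^{k}$ in powers of $s$ recorded in the proof of Proposition \ref{lft1r}(2), the left-hand side becomes
\begin{equation*}
\sum_{h\geq0}\frac{(is)^{h}}{h!}\Lambda_{A}^{m,h}(T)\Big(\sum_{k=0}^{n-1}(-1)^{n-1-k}\binom{n-1}{k}k^{h}\Big),
\end{equation*}
and by (\ref{fcv}) all coefficients with $0\le h\le n-2$ vanish identically, while the coefficient of $s^{n-1}$ is a nonzero multiple of $\Lambda_{A}^{m,n-1}(T)$. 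Since this power series in $s$ is identically zero, differentiating $n-1$ times and evaluating at $s=0$ (or simply comparing the coefficient of $s^{n-1}$) forces $\Lambda_{A}^{m,n-1}(T)=0$, i.e. $T$ is skew $(A,(m,n-1))$-isosymmetric.

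I expect the only delicate point to be the bookkeeping in the last step: one must be careful that the series expansion of $\big(e^{isT^*}\big)^{k}\mathcal{I}^{m}_{A}(T)\big(e^{isT}\big)^{k}$ truncates correctly (it does, precisely because $T$ is skew $(A,(m,n))$-isosymmetric, so $\Lambda_{A}^{m,h}(T)=0$ for $h\ge n$) and that the surviving coefficient of $s^{n-1}$ is genuinely a nonzero scalar multiple of $\Lambda_{A}^{m,n-1}(T)$ rather than being killed by the alternating sum. Everything else is a direct invocation of Proposition \ref{lft1r}, the identity (\ref{fcv}), and Proposition \ref{powx}; there is no genuinely new estimate to produce, so the proof is essentially an assembly of the preceding results in the spirit of the proof of Theorem \ref{n-1}(2).
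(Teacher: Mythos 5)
Your proposal is correct and follows essentially the same route as the paper: invoke Proposition \ref{lft1r}(2) and Theorem \ref{leftinv}(2) to obtain the vanishing of the $n$-th order exponential difference, apply Proposition \ref{powx} (using that $n$ is even together with $(H_1)$ and $(H_2)$), and then expand in powers of $s$ and use the identity (\ref{fcv}) to isolate the coefficient of $s^{n-1}$ and conclude $\Lambda_{A}^{m,n-1}(T)=0$. Your cautionary remarks about the truncation of the series and the non-vanishing of the surviving coefficient are exactly the points the paper's computation handles, so nothing further is needed.
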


\begin{proof}Since $T$ is skew $(A,(m, n))$-isosymmetric, $\Lambda_{A}^{m,p}(T)=0$ for all $p\geq n$ and, by Theorem \ref{leftinv}, the operator $e^{isT}$ satisfies
$$\sum_{k=0}^{n}(-1)^{n-k} \;{n\choose k}\;\Big(e^{isT^*}\Big)^{k} \mathcal{I}^{m}_{A}(T)\Big(e^{isT}\Big)^k=0.$$
On the other hand, since $n$ is even, it follows from Proposition \ref{powx}
\begin{eqnarray*}
0&=& \sum_{k=0}^{n-1}(-1)^{n-1-k} \;{n-1\choose k}\;\Big(e^{isT^*}\Big)^{k} \mathcal{I}^{m}_{A}(T)\Big(e^{isT}\Big)^k \\
 &=&\sum_{k=0}^{n-1}(-1)^{n-1-k} \;{n-1\choose k}\bigg\{\Lambda_{A}^{m,0}(T)+(is)(n-1-k)\Lambda_{A}^{m,1}(T)+\frac{(is)^2(n-1-k)^2}{2!}\Lambda_{A}^{m,2}(T)\\
&&+\frac{(is)^3(n-1-k)^3}{3!}\Lambda_{A}^{m,3}(T)+\cdots +\frac{(is)^{n-1}(n-1-k)^{n-1}}{(n-1)!}\Lambda_{A}^{m,n-1}(T)\bigg\}\\
&=&\Lambda_{A}^{m,0}(T)\bigg(\sum_{k=0}^{n-1}(-1)^{n-1-k} \;{n-1\choose k}\bigg)+(is)\Lambda_{A}^{m,1}(T)\bigg(\sum_{k=0}^{n-1}(-1)^{n-1-k} \;{n-1\choose k}(n-1-k)\bigg)\\
&&+\frac{(is)^2}{2!}\Lambda_{A}^{m,2}(T)\bigg(\sum_{k=0}^{n-1}(-1)^{n-1-k} \;{n-1\choose k}(n-1-k)^2\bigg)+\cdots+\\
&&+\cdots+\frac{(is)^{n-1}}{(n-1)!}\Lambda_{A}^{m,n-1}(T)\bigg(\sum_{k=0}^{n-1}(-1)^{n-1-k} \;{n-1\choose k}(n-1-k)^{n-1}\bigg).
\end{eqnarray*}
Using (\ref{fcv}), we obtain $\Lambda_{A}^{m,n-1}(T)=0$. Hence $T$ is skew $(A,(m, n-1))$-isosymmetric.

\end{proof}

In the rest of this section, we investigate some spectral properties of $(A,(m, n))$-isosymmetric operators. The showed results extends those of \cite{a.2}, \cite{sjn}, \cite{a.1}, \cite{mark} and \cite{X27}.
\begin{theorem}
Let $A\in \mathcal{B}(H)^+$, $T \in \mathcal{B}(H)$. If $T$ is an $(A,(m, n))$-isosymmetric operator and $0\not \in \sigma_{ap}(A)$, then the following statements hold.
\begin{enumerate}
\item $\partial{\sigma(T)} \subset \partial{\textbf{D}}\cup \mathbb{R}$.
\item Let $x, y$ be unit vectors and $x_k, y_k$ be sequences of unit vectors of $H$. Then, we have
\begin{enumerate}
\item If $Tx = \lambda x$, $Ty = \mu y$, $\lambda \neq \mu$ and $\lambda \neq \overline{\mu}$, then $\langle A x\,|\, y\rangle = 0$.
\item If $(T - \lambda)x_k \rightarrow 0$, $(T - \mu)y_k \rightarrow 0$ $(k \rightarrow \infty,\,\lambda \neq \mu,\,\lambda \neq \overline{\mu})$, then $\dd\lim_{k\rightarrow\infty}\langle A x_k\,|\, y_k\rangle = 0.$
\end{enumerate}
\end{enumerate}
\end{theorem}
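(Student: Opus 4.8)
The plan is to treat the two parts in sequence, deducing part (1) from part (2)(b), and deriving part (2) from the defining identity (\ref{007}) specialized to (approximate) eigenvectors. Throughout we use the hypothesis $0\notin\sigma_{ap}(A)$, which guarantees that $\|x\|_A = \langle Ax\mid x\rangle^{1/2}$ controls $\|x\|$ from below (i.e. there is $c>0$ with $\|Ax\|\geq c\|x\|$, hence $\langle Ax\mid x\rangle \geq c'\|x\|^2$ for some $c'>0$), so that $A$-seminorm convergence of normalized vectors cannot degenerate.

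First I would prove (2)(a). Suppose $Tx=\lambda x$ and $Ty=\mu y$. Expanding $\mathcal{S}^{n}_A(T)$ and $\mathcal{I}^m_A(T)$ on these eigenvectors, one gets $\langle \mathcal{S}^n_A(T)T^k x\mid T^k y\rangle = (\bar\mu-\lambda)^n\,\overline{\mu}^{\,k}\lambda^k\,\langle Ax\mid y\rangle$ and similarly $\langle\mathcal{I}^m_A(T)T^{n-k}x\mid T^k y\rangle$ factors through a power of $(\bar\mu\lambda-1)$. Plugging into the identity $\langle \Omega^{m,n}_A(T)x\mid y\rangle = 0$ — more precisely its polarized version, obtained either by polarization of (\ref{007}) or directly from the operator identity $\Omega^{m,n}_A(T)=0$ — both expressions collapse to a single monomial: one finds
\begin{equation*}
\langle \Omega^{m,n}_A(T)x\mid y\rangle = (\bar\mu\lambda-1)^m(\bar\mu-\lambda)^n\langle Ax\mid y\rangle = 0.
\end{equation*}
Since $\lambda\neq\mu$ gives $\bar\mu-\lambda\neq 0$ only after we also know $\lambda\neq\bar\mu$; and $\lambda\neq\bar\mu$ together with... — wait, we need $\bar\mu\lambda\neq 1$ as well. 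Here is where the boundary hypothesis in part (1) is \emph{not} assumed, so one argues: if $|\lambda|=|\mu|=1$ is not guaranteed, we still have two factors, $(\bar\mu\lambda-1)$ and $(\bar\mu-\lambda)$; the second is nonzero precisely when $\lambda\neq\bar\mu$, which is assumed. Actually the cleanest route is to use the $\mathcal S$-form: $(\bar\mu-\lambda)^n\langle\mathcal S^n_A(T)T^kx\mid T^kx\rangle$-type terms are not quite right for the cross term, so instead I would use the operator identity directly: $\Omega^{m,n}_A(T)=0$ implies $\langle\Omega^{m,n}_A(T)x\mid y\rangle=0$, and the double-sum expansion $\sum_{j,k}(-1)^{m+n-j-k}\binom mj\binom nk\langle T^{*j}AT^{\dots}x\mid\dots\rangle$ evaluates on the eigenpair to $(\bar\mu\lambda-1)^m(\bar\mu-\lambda)^n\langle Ax\mid y\rangle$ by the binomial theorem applied twice. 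Hence $\langle Ax\mid y\rangle=0$ whenever $\lambda\neq\bar\mu$ (the factor $\bar\mu-\lambda$) — the hypothesis $\lambda\neq\mu$ is used to ensure at least one of the two scalar factors is nonzero in the edge cases, and in fact $\lambda\neq\bar\mu$ alone suffices for the $(\bar\mu-\lambda)^n$ factor.

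For (2)(b) I would run the same computation with approximate eigenvectors: writing $u_k=(T-\lambda)x_k\to 0$ and $v_k=(T-\mu)y_k\to 0$, one expands $\langle\Omega^{m,n}_A(T)x_k\mid y_k\rangle=0$ and replaces each occurrence of $T^r x_k$ by $\lambda^r x_k + (\text{terms involving }u_k)$ and likewise for $y_k$; since $\Omega^{m,n}_A(T)$ is a fixed bounded operator and $\|A\|$ is finite, all the error terms are $O(\|u_k\|+\|v_k\|)\to 0$, leaving $(\bar\mu\lambda-1)^m(\bar\mu-\lambda)^n\langle Ax_k\mid y_k\rangle\to 0$. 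As $\lambda\neq\bar\mu$ the scalar factor is a nonzero constant, so $\langle Ax_k\mid y_k\rangle\to 0$. Finally, for (1): let $\lambda\in\partial\sigma(T)$. By a classical result $\partial\sigma(T)\subset\sigma_{ap}(T)$, and moreover $\lambda\in\sigma_{ap}(T)$ with $\bar\lambda\in\sigma_{ap}(T^*)$; take approximate eigenvectors $x_k$ for $T$ at $\lambda$. Applying (2)(b) with $\mu=\lambda$ we get no information, so instead I would argue by contradiction: if $|\lambda|\neq 1$ and $\lambda\notin\mathbb R$ (so $\lambda\neq\bar\lambda$), then both scalar factors $(\bar\lambda\lambda-1)^m=(|\lambda|^2-1)^m$ and $(\bar\lambda-\lambda)^n$ are nonzero, hence from $\langle\Omega^{m,n}_A(T)x_k\mid x_k\rangle=0$ and the expansion one forces $\langle Ax_k\mid x_k\rangle\to 0$, i.e. $\|x_k\|_A\to 0$; but $0\notin\sigma_{ap}(A)$ makes this impossible for unit vectors. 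Therefore $|\lambda|=1$ or $\lambda\in\mathbb R$, i.e. $\partial\sigma(T)\subset\partial\mathbf D\cup\mathbb R$.

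The main obstacle is bookkeeping the double-binomial collapse: one must verify that $\sum_{j=0}^m\sum_{k=0}^n(-1)^{m+n-j-k}\binom mj\binom nk\,\bar\mu^{\,?}\lambda^{\,?}$ — with the exponents dictated by whichever of the two equivalent forms of $\Omega^{m,n}_A(T)$ one uses — really does factor as $(\bar\mu\lambda-1)^m(\bar\mu-\lambda)^n$. Using the $\mathcal S^n_A$-form, the inner sum over $j$ (from $\sum_j(-1)^{m-j}\binom mj T^{*j}(\cdot)T^j$) contributes $(\bar\mu\lambda-1)^m$ after noting $\mathcal S^n_A(T)$ evaluated between $T^jx$ and $T^jy$ carries a factor $(\bar\mu-\lambda)^n(\bar\mu\lambda)^j$... — so the clean check is: $\langle\mathcal S^n_A(T)x\mid y\rangle=(\bar\mu-\lambda)^n\langle Ax\mid y\rangle$ on the eigenpair, and then $\langle\Omega^{m,n}_A(T)x\mid y\rangle=\sum_j(-1)^{m-j}\binom mj(\bar\mu\lambda)^j(\bar\mu-\lambda)^n\langle Ax\mid y\rangle=(\bar\mu\lambda-1)^m(\bar\mu-\lambda)^n\langle Ax\mid y\rangle$. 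Getting this identity stated cleanly (and its approximate-vector perturbation controlled) is the only real content; everything else is the standard $\partial\sigma(T)\subset\sigma_{ap}(T)$ fact plus the non-degeneracy of $\|\cdot\|_A$.
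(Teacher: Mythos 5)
Your proposal follows essentially the same route as the paper: evaluate $\langle\Omega_{A}^{m,n}(T)x\mid y\rangle=0$ on eigenpairs (resp. approximate eigenpairs), collapse the double binomial sum to $(\lambda\overline{\mu}-1)^m(\lambda-\overline{\mu})^n\langle Ax\mid y\rangle$, and for part (1) take $\mu=\lambda$ and use $0\notin\sigma_{ap}(A)$ to rule out $\|x_k\|_A\to 0$, together with $\partial\sigma(T)\subset\sigma_{ap}(T)$. One caution: your parenthetical claim that "$\lambda\neq\overline{\mu}$ alone suffices" is not justified — the product vanishes if $\lambda\overline{\mu}=1$ (e.g. $\lambda,\mu$ real with $\lambda\mu=1$), a case not excluded by the stated hypotheses; but this is a gap the paper's own proof shares, so your argument is faithful to it.
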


\begin{proof}
\begin{enumerate}
\item If $\lambda  \in \mathbb{C}$ is in the approximate point spectrum of $T$, then there exists a sequence $(x_j) \subset H$
such that for all $j$, $\Vert x_j \Vert = 1$, and $(T - \lambda) x_j \longrightarrow 0$ as $j \longrightarrow \infty$. Thus, for each integer $i$, $\lim_{j \longrightarrow \infty} (T^i - {\lambda}^i) x_j \longrightarrow 0$. Moreover,
\begin{eqnarray*}
0&=&
\sum_{k=0}^{m}(-1)^{m-k} \;{m\choose k}\;\big\langle\mathcal{S}^{n}_{A}(T)T^{k}x\,\big|\,T^{k}y\big\rangle\\
&=&{\big(\lambda\overline{\lambda} -1\big)}^{m}\big({\big(\lambda - \overline{\lambda}\big)}^{n} \big) \lim_{j \to \infty} \langle  A  x_j \mid x_j \rangle\\
&=& {\big(|\lambda|^2 -1\big)}^{m}{\big(2\Im m\lambda\big)}^{n} \lim_{j \to \infty} \| x_j \|^{2}_{A},
\end{eqnarray*}
which implies that either $|\lambda|=1$ or $\Im m\lambda = 0$ and so, either $\lambda\in \textbf{D}$ or $\lambda\in \mathbb{R}$. Hence, $\sigma_{ap}(T) \subset \partial{\textbf{D}}\cup \mathbb{R}$. Moreover, $\partial{\sigma(T)}  \subset \sigma_{ap}(T) $. So, $\partial{\sigma(T)} \subset \partial{\textbf{D}}\cup \mathbb{R}$.
\item
\begin{enumerate}
\item Let $\lambda$ and $\mu$ be two distinct eigenvalues of $T$ and suppose that $T x = \lambda x$ and $Ty = \mu y$. Then, it holds
\begin{eqnarray*}
0&=&\langle\Omega_{A}^{m,n}(T)x\,|\,y\rangle\\
&=&\bigg(\sum_{k=0}^{m}(-1)^{m-k} \;{m\choose k}\;\lambda^k \overline{\mu}^k\bigg)\bigg(\sum_{i=0}^n(-1)^{n-i} \;{n\choose i}\;\lambda^{n-i} \overline{\mu}^i \bigg)\big\langle Ax\,\big|\,y\big\rangle\\
&=&(\lambda\overline{\mu}-1)^m (\lambda-\overline{\mu})^n\big\langle Ax\,\big|\,y\big\rangle.
\end{eqnarray*}
Since $\lambda \neq \mu$ and $\lambda \neq \overline{\mu}$, we obtain $\big\langle Ax\,\big|\,y\big\rangle=0$.
\item By similar arguments as in the proof of $(3)$, it holds
$$0 = (\lambda\overline{\mu}-1)^m (\lambda-\overline{\mu})^n \lim_{k\rightarrow\infty}\langle A x_k\,|\, y_k\rangle.$$
Since $\lambda \neq \mu$ and $\lambda \neq \overline{\mu}$, it follows $\lim_{k\rightarrow\infty}\langle A x_k\,|\, y_k\rangle=0.$
\end{enumerate}
\end{enumerate}
\end{proof}

\begin{corollary}
Let $m,n$ be positive integers, $T\in \mathcal{B}(H)$ and $A\in \mathcal{B}(H)^+$ with $0 \not\in \sigma_{ap}(A)$. Then the following statements holds.
\begin{enumerate}
\item If $T$ is an $(A,m)$-isometry, then $\partial{\sigma(T)} \subset \partial{\textbf{D}}$ and either
$\sigma(T)\subset\partial{\textbf{D}}$ or $\sigma(T)=\textbf{D}$.
\item If $T$ is an $(A,n)$-symmetry, then $\sigma(T) \subset \mathbb{R}$.
\item If $T$ is $(A,m)$-isometric or $(A,n)$-symmetric, then
\begin{enumerate}
\item $\sigma_p {(T)} \subset \sigma_p {(T^{*})} $.
\item $\sigma_{ap}(T) \subset \sigma_{ap}(T^{*})$.
\end{enumerate}
\item If $T$ is an $(A,(m, n))$-isosymmetry and either $\sigma(T)\subset\{e^{it}\in\partial{\textbf{D}}:0< t<\pi \}$ or $\sigma(T)\subset\{e^{it}\in\partial{\textbf{D}}:\pi< t<2\pi \}$, then $T$ is an $(A,m)$-isometry.
\item If $T$ is an $(A,(m, n))$-isosymmetry and either $\sigma(T)\subset\{t\in \mathbb{R}:|t|>1 \}$ or $\sigma(T)\subset\{t\in \mathbb{R}:|t|<1 \}$, then $T$ is an $(A,n)$-symmetry.
\end{enumerate}
\end{corollary}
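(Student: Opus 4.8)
The plan is to deduce all five parts from the spectral theorem just proved, supplemented by a connectedness argument for the spectrum in (1)--(2), a short algebraic manipulation in (3), and the spectral mapping theorem for elementary operators in (4)--(5), which is the only genuinely new ingredient.

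\textbf{Parts (1) and (2).} I would sharpen the inclusion $\partial\sigma(T)\subset\partial\mathbf D\cup\mathbb R$ rather than just quote it. If $T$ is $(A,m)$-isometric then $\mathcal I^m_A(T)=0$, so for $\lambda\in\sigma_{ap}(T)$ and unit vectors $x_j$ with $(T-\lambda)x_j\to0$ we have $T^kx_j-\lambda^kx_j\to0$ for all $k$, whence, along a subsequence,
$$0=\langle\mathcal I^m_A(T)x_j\mid x_j\rangle=\sum_{k=0}^m(-1)^{m-k}\binom mk\langle AT^kx_j\mid T^kx_j\rangle\longrightarrow(|\lambda|^2-1)^m\lim_j\|x_j\|_A^2 .$$
Since $0\notin\sigma_{ap}(A)$ gives $A\ge cI$ for some $c>0$, this limit is $\ge c>0$, forcing $|\lambda|=1$; hence $\sigma_{ap}(T)\subset\partial\mathbf D$, and $\partial\sigma(T)\subset\sigma_{ap}(T)\subset\partial\mathbf D$. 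The dichotomy is then topological: $\sigma(T)\setminus\partial\mathbf D$ is open in $\mathbb C$ (each of its points is interior to $\sigma(T)$) and relatively closed in $\mathbb C\setminus\partial\mathbf D$, so it is a union of connected components of $\mathbb C\setminus\partial\mathbf D$; boundedness of $\sigma(T)$ rules out the unbounded component, so $\sigma(T)\setminus\partial\mathbf D$ is empty or is the open unit disc, i.e. $\sigma(T)\subset\partial\mathbf D$ or $\sigma(T)=\mathbf D$. For (2) the same computation with $\mathcal S^n_A(T)=0$ produces the factor $(2i\,\Im m\,\lambda)^n$, so $\sigma_{ap}(T)\subset\mathbb R$; as both components of $\mathbb C\setminus\mathbb R$ are unbounded, $\sigma(T)\subset\mathbb R$.

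\textbf{Part (3).} Let $Tx=\lambda x$ with $x\ne0$; since $0\notin\sigma_{ap}(A)$ the operator $A$ is injective, so $Ax\ne0$. Pairing the defining identity with an arbitrary $z\in H$ and pulling out the scalars via $T^kx=\lambda^kx$ gives $\langle\mathcal I^m_A(T)x\mid z\rangle=\langle(\lambda T^*-I)^mAx\mid z\rangle$ and $\langle\mathcal S^n_A(T)x\mid z\rangle=\langle(T^*-\lambda)^nAx\mid z\rangle$, so $(\lambda T^*-I)^mAx=0$ in the $(A,m)$-isometric case and $(T^*-\lambda)^nAx=0$ in the $(A,n)$-symmetric case. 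As $Ax\ne0$, the relevant power, hence the factor itself, fails to be injective, so $T^*$ has $\bar\lambda$ as an eigenvalue (with $\bar\lambda=\lambda$ in the symmetric case, where $\lambda\in\mathbb R$ by (2)), which gives (a). Replacing eigenvectors by approximate eigenvectors and equalities by limits gives (b).

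\textbf{Parts (4) and (5).} This is the decisive step. A direct expansion from the definitions (equivalently, the recursions of Proposition \ref{waw}) shows $\Omega^{m,n}_A(T)=D^n\big(\mathcal I^m_A(T)\big)=\Delta^m\big(\mathcal S^n_A(T)\big)$, where $D(X)=T^*X-XT$ and $\Delta(X)=T^*XT-X$ are commuting bounded operators on $\mathcal B(H)$ (each lies in the commutative subalgebra generated by left multiplication by $T^*$ and right multiplication by $T$). By the Lumer--Rosenblum spectral mapping theorem for elementary operators, $\sigma(D)=\sigma(T^*)-\sigma(T)=\{\,\bar\mu-\lambda:\mu,\lambda\in\sigma(T)\,\}$ and $\sigma(\Delta)=\sigma(T^*)\sigma(T)-1=\{\,\bar\mu\lambda-1:\mu,\lambda\in\sigma(T)\,\}$. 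If $\sigma(T)$ lies in one of the open half-circles $\{e^{it}:0<t<\pi\}$, $\{e^{it}:\pi<t<2\pi\}$, then $\overline{\sigma(T)}$ lies in the other, so $\overline{\sigma(T)}\cap\sigma(T)=\emptyset$, i.e. $0\notin\sigma(D)$; hence $D$, and so $D^n$, is invertible, and $D^n\big(\mathcal I^m_A(T)\big)=\Omega^{m,n}_A(T)=0$ forces $\mathcal I^m_A(T)=0$, i.e. $T$ is $(A,m)$-isometric. If instead $\sigma(T)\subset\{t\in\mathbb R:|t|>1\}$ or $\sigma(T)\subset\{t\in\mathbb R:|t|<1\}$, then $\bar\mu\lambda=\mu\lambda$ with $|\mu\lambda|>1$, resp. $<1$, for all $\mu,\lambda\in\sigma(T)$, so $1\notin\sigma(T^*)\sigma(T)$, i.e. $0\notin\sigma(\Delta)$; hence $\Delta^m$ is invertible, and $\Delta^m\big(\mathcal S^n_A(T)\big)=0$ forces $\mathcal S^n_A(T)=0$, i.e. $T$ is $(A,n)$-symmetric. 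The only delicate point in the whole corollary is this last one: recognising $\Omega^{m,n}_A(T)$ as a polynomial in the two commuting elementary operators $D$ and $\Delta$ evaluated at $A$ and invoking their exact spectra (note this argument uses neither the preceding theorem nor $0\notin\sigma_{ap}(A)$); after that, (4) and (5) are immediate from the invertibility of $D^n$, resp. $\Delta^m$.
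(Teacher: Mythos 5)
Your proposal is correct, and it is worth noting that the paper itself states this corollary with no proof at all, so there is nothing to compare line by line; what you have written is the argument the paper implicitly relies on for items (1)--(3) (specialize the preceding spectral theorem, then use that $\partial\sigma(T)\subset\sigma_{ap}(T)$ together with the clopen/connectedness dichotomy and the unboundedness of the exterior components), and for items (4)--(5) you supply the genuinely missing ingredient: the identification $\Omega^{m,n}_A(T)=D^n\bigl(\mathcal I^m_A(T)\bigr)=\Delta^m\bigl(\mathcal S^n_A(T)\bigr)$ with $D=L_{T^*}-R_T$, $\Delta=L_{T^*}R_T-I$, and the Rosenblum/Lumer--Rosenblum spectral mapping theorem $\sigma(D)\subset\sigma(T^*)-\sigma(T)$, $\sigma(\Delta)\subset\sigma(T^*)\sigma(T)-1$, which under the stated spectral hypotheses makes $D^n$ (resp.\ $\Delta^m$) invertible and kills $\mathcal I^m_A(T)$ (resp.\ $\mathcal S^n_A(T)$). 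This is exactly the mechanism used by Stankus for $(m,n)$-isosymmetries, and your observation that it needs neither the preceding theorem nor $0\notin\sigma_{ap}(A)$ is accurate. One caution: your direct expansion of $\Omega^{m,n}_A$ is the correct one (consistent with its definition), whereas the displayed recursions in Proposition \ref{waw} of the paper have the roles of the $m$- and $n$-increments interchanged; do not cite those formulas verbatim.

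Two small points deserve attention in part (3). First, from $(\lambda T^*-I)^mAx=0$ you obtain a nonzero vector $y$ with $T^*y=\lambda^{-1}y$, and you need $|\lambda|=1$ (from part (1)) to rewrite $\lambda^{-1}=\bar\lambda$; say this explicitly, since without it the conclusion reads differently. Second, what your argument actually proves is $\bar\lambda\in\sigma_p(T^*)$ (resp.\ $\bar\lambda\in\sigma_{ap}(T^*)$), which is the correct statement: the literal inclusion $\sigma_p(T)\subset\sigma_p(T^*)$ as printed is false in general (take $T=iI$ on $\mathbb C$, an $(A,1)$-isometry with $\sigma_p(T)=\{i\}$ and $\sigma_p(T^*)=\{-i\}$), so the corollary should be read with the conjugation built in, and your proof establishes the intended version. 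The sketch for (3b) ("replace eigenvectors by approximate eigenvectors") does go through, via the standard fact that if $B$ is bounded below then so is $B^m$, applied to $B=\lambda T^*-I$ together with $\|Ax_k\|\ge c>0$.
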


An operator $T\in \mathcal{B}(H)$ is said to have the single valued extension property at $\lambda$ (abbreviated SVEP at $\lambda$) if for every open set $D$ containing $\lambda$ the only analytic function $f : D\longrightarrow H$ which satisfies the equation
\begin{equation}\label{svep}
(T-\lambda)f(\lambda) = 0\quad on\;D
\end{equation}
is the constant function $f\equiv0$. $T$ has the SVEP if $T$ has the SVEP at every $\lambda\in \mathbb{C}$ (\cite{r01}).

\begin{theorem}Let $m,n$ be positive integers and let $A\in \mathcal{B}(H)^+$ with $0 \not\in \sigma_p(A)$. Then, every $(A,(m, n))$-isosymmetric operator has the SVEP.
\end{theorem}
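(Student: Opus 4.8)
The plan is to argue by contradiction, producing an uncountable orthogonal family of nonzero vectors in the separable space $H$. So suppose $T$ is $(A,(m,n))$-isosymmetric but fails to have SVEP at some $\lambda_0$. Then there are an open set $D$ with $\lambda_0\in D$ and an analytic map $f\colon D\to H$ with $f\not\equiv 0$ and $(T-\lambda)f(\lambda)=0$ for all $\lambda\in D$. First I would observe that the set $U:=\{\lambda\in D:\ f(\lambda)\neq 0\}$ is open (continuity of $f$) and nonempty (since $f\not\equiv 0$); hence $U\setminus\big(\mathbb{R}\cup\{\,z:|z|=1\,\}\big)$ is still a nonempty open subset of $\mathbb{C}$, and I may choose an open disc $B=B(c,r)\subset U$ with $\Im c\neq 0$, $|c|\neq 1$, and $r$ small enough that every $\lambda\in B$ satisfies $\operatorname{sign}(\Im\lambda)=\operatorname{sign}(\Im c)$ and $\operatorname{sign}(|\lambda|-1)=\operatorname{sign}(|c|-1)$. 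For each $\lambda\in B$ the vector $f(\lambda)\neq 0$ satisfies $Tf(\lambda)=\lambda f(\lambda)$, so $\lambda\in\sigma_p(T)$ with eigenvector $f(\lambda)$.

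Next comes the orthogonality step. Fix distinct $\lambda,\mu\in B$. By the choice of $B$ we have $\lambda\neq\mu$, $\lambda\neq\overline\mu$ (since $\Im\lambda$ and $\Im\overline\mu=-\Im\mu$ have opposite signs), and $\lambda\overline\mu\neq 1$ (since $|\lambda\overline\mu|=|\lambda|\,|\mu|\neq 1$, because $|\lambda|$ and $|\mu|$ lie on the same side of $1$). Using $T^{j}f(\lambda)=\lambda^{j}f(\lambda)$ together with $\langle T^{*k}v\,|\,f(\mu)\rangle=\overline\mu^{\,k}\langle v\,|\,f(\mu)\rangle$, exactly as in the proof of the preceding theorem on the spectrum, one computes
\[
0=\langle\Omega_{A}^{m,n}(T)f(\lambda)\,|\,f(\mu)\rangle=(\lambda\overline\mu-1)^m(\lambda-\overline\mu)^n\,\langle Af(\lambda)\,|\,f(\mu)\rangle .
\]
Since the scalar factor $(\lambda\overline\mu-1)^m(\lambda-\overline\mu)^n$ is nonzero, this forces $\langle Af(\lambda)\,|\,f(\mu)\rangle=0$, i.e. $\langle A^{1/2}f(\lambda)\,|\,A^{1/2}f(\mu)\rangle=0$. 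Hence $\{A^{1/2}f(\lambda):\lambda\in B\}$ is an orthogonal family in $H$.

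Finally I would invoke injectivity and separability. Since $0\notin\sigma_p(A)$, we have $\mathcal{N}(A)=\{0\}$, hence $\mathcal{N}(A^{1/2})=\{0\}$, so $A^{1/2}f(\lambda)\neq 0$ for every $\lambda\in B$. Therefore $\{A^{1/2}f(\lambda):\lambda\in B\}$ is an uncountable orthogonal family of nonzero vectors in the separable Hilbert space $H$, which is impossible. This contradiction shows that $T$ has SVEP at every point of $\mathbb{C}$.

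I expect the only delicate point to be the bookkeeping in the first paragraph: one must shrink the disc $B$ so that \emph{all three} conditions $\lambda\neq\mu$, $\lambda\neq\overline\mu$ and $\lambda\overline\mu\neq 1$ hold simultaneously for every pair of points of $B$ (which is precisely why one stays away from the real axis and from the unit circle, so that the scalar factor above never vanishes). Everything else — the openness of $U$, the passage through $A^{1/2}$, and the separability obstruction — is routine.
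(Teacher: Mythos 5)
Your argument is correct, but it is not the route the paper takes. The paper evaluates the \emph{diagonal} quadratic form at a single point: for every $\lambda\in D$ it computes
$0=\langle\Omega_{A}^{m,n}(T)f(\lambda)\,|\,f(\lambda)\rangle=\big(1-|\lambda|^2\big)^{m}\big(2i\,\Im\lambda\big)^{n}\big\|A^{1/2}f(\lambda)\big\|^2$,
so $f(\lambda)=0$ for every $\lambda$ off the real line and the unit circle (using $0\notin\sigma_p(A)$ to pass from $A^{1/2}f(\lambda)=0$ to $f(\lambda)=0$), and then $f\equiv 0$ on $D$ by continuity since that set is dense in $D$. You instead use the \emph{off-diagonal} identity
$0=(\lambda\overline{\mu}-1)^m(\lambda-\overline{\mu})^n\langle Af(\lambda)\,|\,f(\mu)\rangle$
from the preceding spectral theorem to manufacture an uncountable orthogonal family $\{A^{1/2}f(\lambda):\lambda\in B\}$ and contradict separability of $H$. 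Your bookkeeping is sound: confining $B$ to one side of $\mathbb{R}$ forces $\lambda\neq\overline{\mu}$, confining it to one side of the unit circle forces $|\lambda\overline{\mu}|\neq 1$, and injectivity of $A^{1/2}$ makes every vector in the family nonzero, so the cardinality contradiction goes through (separability of $H$ is a standing hypothesis in the paper). The trade-off: the paper's diagonal computation is shorter, needs no separability, and in fact already contains your argument as the special case $\mu=\lambda$ of the very identity you invoke — on your disc $B$ the scalar $(|\lambda|^2-1)^m(\lambda-\overline{\lambda})^n$ is itself nonzero, which would give $f(\lambda)=0$ on $B$ immediately and contradict $B\subseteq U$ without any orthogonality or cardinality considerations. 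Your version is still a valid and standard alternative, and it has the mild virtue of reusing the orthogonality statement already proved for distinct eigenvalues.
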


\begin{proof}
Assume that $f : D\longrightarrow H$ is any analytic function on $D$ such that $(\ref{svep})$ holds true. From $(\ref{svep})$, it follows that $(T^k -\lambda^k)f(\lambda)= 0$ on $D$ for all positive integer $k$.
This implies that
\begin{eqnarray*}
0=\langle\Omega_{A}^{m,n}(T)f(\lambda)\,|\,f(\lambda)\rangle&=&
\sum_{k=0}^{m}(-1)^{m-k} \;{m\choose k}\;\big\langle\mathcal{S}^{n}_{A}(T)T^{k}f(\lambda)\,\big|\,T^{k}f(\lambda)\big\rangle\\
&=&\sum_{k=0}^{m}(-1)^{m-k} \;{m\choose k}\;|\lambda|^{2k}\big\langle\mathcal{S}^{n}_{A}(T)f(\lambda)\,\big|\,f(\lambda)\big\rangle\\
&=&\big(1-|\lambda|^2\big)^{m}\sum_{j=0}^{n}(-1)^{n-j} \;{n\choose j}\;\big\langle A^{\frac{1}{2}}T^{n-j} f(\lambda)\,\big|\,A^{\frac{1}{2}}T^{j}f(\lambda)\big\rangle\\
&=&\big(1-|\lambda|^2\big)^{m}\sum_{j=0}^{n}(-1)^{n-j} \;{n\choose j}\;\lambda^{n-j} \overline{\lambda}^{j}\big\langle A^{\frac{1}{2}}f(\lambda)\,\big|\,A^{\frac{1}{2}}f(\lambda)\big\rangle\\
&=&\big(1-|\lambda|^2\big)^{m}\big(2\Im m \,\lambda\big)^n\big\|A^{\frac{1}{2}}f(\lambda)\|^2 .
\end{eqnarray*}
This completes the proof.

\end{proof}

\section{ Sum with a nilpotent operator}
\label{S4}
The paper \cite{t2} proves that, for positive integers $m,n$, $(T +Q)$ is $(m+2r -2, n+2r -1)$-isosymmetric if $T$ is an $(m, n)$-isosymmetry, $Q$ is a nilpotent operator with order $r$, and $Q$ doubly commutes with $T$ (i.e $TQ = QT$ and $TQ^* = Q^* T$). The authors, in \cite{t2}, shows this property for $(m, n,C)$-isosymmetric operators, with $C$ a conjugation on a Hilbert space and $T$ and $S$ are $C$-doubly commuting (i.e $TQ = QT$ and $CTCQ^* = Q^*CTC$). Theorem \ref{bibb} extend this result to the sum of an $(A,(m, n))$-isosymmetry and a nilpotent operator.

\begin{lemma}
Let $T,S \in \mathcal{B}(H)$ and $A\in \mathcal{B}(H)^+$. If $T$ and $S$ are doubly commuting, then the following two operator identities hold
\begin{equation}\label{xx1}
\Omega_{A}^{m,n}(T+S)=\sum_{k=0}^{n}\sum_{j=0}^{n-k}\sum_{i+l+h=m}(-1)^k{n\choose k}\;{n-k\choose j}{m\choose i,l,h}\;(T^*+S^*)^i S^{*l+j} \Omega_{A}^{h,n-k-j}(T)T^l S^{i+k}.
\end{equation}
\begin{equation}\label{xx2}
\Lambda_{A}^{m,n}(T+S)=\sum_{k=0}^{n}\sum_{j=0}^{n-k}\sum_{i+l+h=m}{n\choose k}\;{n-k\choose j}{m\choose i,l,h}\;(T^*+S^*)^i S^{*l+j} \Lambda_{A}^{h,n-k-j}(T)T^l S^{i+k}.
\end{equation}
\end{lemma}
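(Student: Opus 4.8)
The plan is to deduce both identities from purely formal polynomial identities in commuting indeterminates and then substitute operators, exactly as in the proofs of Proposition~\ref{waw}(3) and Theorem~\ref{therm11}(2). Introduce indeterminates $x,y,u,v$, with $x$ standing for $T$, $y$ for $T^{*}$, $u$ for $S$ and $v$ for $S^{*}$, and recall that
$$\Omega_{A}^{m,n}(W)=\sum_{j=0}^{m}\sum_{k=0}^{n}(-1)^{m+n-j-k}\,{m\choose j}{n\choose k}\,W^{*\,j+k}\,A\,W^{\,n+j-k},\qquad W\in\mathcal B(H).$$
For $W=T+S$ one has $(T+S)^{*}=T^{*}+S^{*}$, and by the double commutation each of $T,T^{*}$ commutes with each of $S,S^{*}$; hence $(T+S)^{*\,j+k}=(T^{*}+S^{*})^{j+k}$ and $(T+S)^{\,n+j-k}$ both expand by the binomial theorem, so $\Omega_{A}^{m,n}(T+S)$ becomes a linear combination of terms $T^{*a}S^{*b}AT^{c}S^{d}$. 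Equivalently, $\Omega_{A}^{m,n}(T+S)$ is obtained from the polynomial $\big((y+v)(x+u)-1\big)^{m}\big((y+v)-(x+u)\big)^{n}$ by expanding it in the commutative ring $\mathbb{C}[x,y,u,v]$ and replacing each monomial $y^{a}v^{b}x^{c}u^{d}$ by $T^{*a}S^{*b}AT^{c}S^{d}$; the genuine non-commutativity (of $T$ with $T^{*}$, of $S$ with $S^{*}$) is harmless because $A$ always separates the starred factors from the unstarred ones.

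Granting this reduction, the computation is a double use of the multinomial theorem. From $(y+v)(x+u)-1=(yx-1)+vx+(y+v)u$, the three summands commuting in $\mathbb{C}[x,y,u,v]$,
$$\big((y+v)(x+u)-1\big)^{m}=\sum_{i+l+h=m}{m\choose i,l,h}\,(y+v)^{i}\,v^{l}\,(yx-1)^{h}\,x^{l}\,u^{i}.$$
Next, $\big((y+v)-(x+u)\big)^{n}=\big((y-x)+(v-u)\big)^{n}$; expanding this binomially, then $(v-u)^{p}$ binomially, and re-indexing by $p=j+k$ with ${n\choose j+k}{j+k\choose j}={n\choose k}{n-k\choose j}$, we get
$$\big((y+v)-(x+u)\big)^{n}=\sum_{k=0}^{n}\sum_{j=0}^{n-k}(-1)^{k}\,{n\choose k}{n-k\choose j}\,v^{j}\,u^{k}\,(y-x)^{n-k-j}.$$
Multiplying the last two displays and reordering the (commuting) factors so that $(y+v)^{i}v^{\,l+j}$ stands on the left, $x^{l}u^{\,i+k}$ on the right and $(yx-1)^{h}(y-x)^{n-k-j}$ in the middle, then performing the substitution of the first paragraph, the middle factor becomes $\big(\big\{(yx-1)^{h}(y-x)^{n-k-j}\big\}_{\mathbf a}\big)(T,A)=\Omega_{A}^{h,n-k-j}(T)$ and the outer ones become $(T^{*}+S^{*})^{i}S^{*\,l+j}$ and $T^{l}S^{\,i+k}$; collecting the coefficients gives exactly \eqref{xx1}.

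For \eqref{xx2} one repeats the argument with $(y-x)^{n}$ replaced throughout by $(y+x)^{n}$ — equivalently, one uses that $T$ is skew $(A,(m,n))$-isosymmetric if and only if $iT$ is $(A,(m,n))$-isosymmetric, together with $\big\{(x+y)^{n}\big\}_{\mathbf a}(T,A)=i^{n}\big\{(x-y)^{n}\big\}_{\mathbf a}(iT,A)$: this merely replaces $(v-u)^{p}$ by $(v+u)^{p}$, so the sign $(-1)^{k}$ disappears and each $\Omega$ is replaced by the corresponding $\Lambda$, giving \eqref{xx2}. The step I expect to require the most care is the reduction of the first paragraph, namely justifying that $\Omega_{A}^{m,n}(T+S)$ may be computed as if $x,y,u,v$ were commuting indeterminates with $A$ inserted in the middle; this rests precisely on the double commutation (to expand the powers of $T^{*}+S^{*}$ and of $T+S$ binomially) together with the observation that in the defining sum for $\Omega_{A}^{m,n}$ all starred operators lie to the left of $A$ and all unstarred ones to its right. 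Once that is granted, the rest is the routine combinatorial bookkeeping sketched above.
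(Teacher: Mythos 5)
Your proof is correct and follows essentially the same route as the paper: both rewrite $(y_1+y_2)(x_1+x_2)-1=(y_1x_1-1)+(y_1+y_2)x_2+y_2x_1$ and apply the multinomial theorem, expand $\big((y_1-x_1)+(y_2-x_2)\big)^n$ (resp.\ the $+$ version) binomially with the re-indexing ${n\choose j+k}{j+k\choose j}={n\choose k}{n-k\choose j}$, and then substitute $T,T^{*},S,S^{*},A$ using the double commutation, which is exactly what the paper's hereditary calculus notation $\{\cdot\}_{\mathbf a}$ encodes. Your explicit justification of why the formal commutative computation is legitimate (all starred factors sit to the left of $A$, all unstarred ones to its right) is a welcome clarification of a point the paper leaves implicit.
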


\begin{proof} The multinomial theorem gives
\begin{eqnarray*}
&&\big((y_1 +y_2) (x_1 +x_2)-1\big)^m\big\{\big((y_1 +y_2) -(x_1 +x_2)\big)^n\big\}_\mathbf{a}\\
&=&\big((y_1x_1-1)+(y_1 +y_2)x_2+y_2 x_1\big)^m\big\{((y_1-x_1)+(y_2-x_2))^n\big\}_\mathbf{a}\\
&=&\bigg(\sum_{i+l+h=m}{m\choose i,l,h}(y_1 +y_2)^i y^{l}_2 (y_1x_1-1)^h x^{l}_1 x^{i}_2\bigg)\\
&&\cdot\bigg\{\sum_{k=0}^{n}\sum_{j=0}^{n-k}(-1)^k{n\choose k}\;{n-k\choose j}\;y_{2}^{j}(y_1-x_1)^{(n-k-j)}x_{2}^{k}\bigg\}_\mathbf{a}\\
&=&\bigg(\sum_{i+l+h=m}(y_1 +y_2)^i y^{l}_2 (y_1x_1-1)^h x^{l}_1 x^{i}_2 \bigg)\\
&&\cdot\bigg(\sum_{k=0}^{n}\sum_{j=0}^{n-k}(-1)^k{n\choose k}\;{n-k\choose j}\;y_{2}^{j}\Big\{(y_1-x_1)^{(n-k-j)}\Big\}_\mathbf{a}x_{2}^{k}\bigg)\\
&=&\sum_{k=0}^{n}\sum_{j=0}^{n-k}\sum_{i+l+h=m}(-1)^k{n\choose k}\;{n-k\choose j}{m\choose i,l,h}\;(y_1 +y_2)^i y^{l+j}_2 (y_1x_1-1)^h \Big\{(y_1-x_1)^{(n-k-j)}\Big\}_\mathbf{a}x^{l}_1 x^{i+k}_2 .
\end{eqnarray*}
On the other hand, we have
\begin{eqnarray*}
&&\big((y_1 +y_2) (x_1 +x_2)-1\big)^m\big\{\big((y_1 +y_2) -(x_1 +x_2)\big)^n\big\}_\mathbf{a}\\
&=&\big((y_1x_1-1)+(y_1 +y_2)x_2+y_2 x_1\big)^m\big\{((y_1+x_1)+y_2+x_2)^n\big\}_\mathbf{a}\\
&=&\sum_{k=0}^{n}\sum_{j=0}^{n-k}\sum_{i+l+h=m}{n\choose k}\;{n-k\choose j}{m\choose i,l,h}\;(y_1 +y_2)^i y^{l+j}_2 (y_1x_1-1)^h \Big\{(y_1+x_1)^{(n-k-j)}\Big\}_\mathbf{a}x^{l}_1 x^{i+k}_2 .
\end{eqnarray*}
If $T$ and $S$ are doubly commuting, then replacing $x_1$ by $T$, $x_2$ by $S$, $y_1$ by $T^*$, $y_2$ by $S^*$  and $\textbf{a}$ by $A$, we obtain (\ref{xx1}) and (\ref{xx2}).

\end{proof}

\begin{theorem}\label{bibb}
Let $A\in \mathcal{B}(H)^+$, $T \in \mathcal{B}(H)$ and $Q$ is a nilpotent operator of
order $r.$ If $T$ and $Q$ are doubly commuting, then the following statements hold.
\begin{enumerate}
\item If $T$ is $(A,(m, n))$-isosymmetric, then $(T +Q)$ is $(A,(m+2r -2, n+2r -1))$-isosymmetric.
\item If $T$ is skew $(A,(m, n))$-isosymmetric, then $(T +Q)$ is skew $(A,(m+2r -2, n+2r -1))$-isosymmetric.
\end{enumerate}
\end{theorem}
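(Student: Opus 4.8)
The plan is to obtain $T+Q$ directly from the expansion established in the preceding Lemma, specialised to $S=Q$ and with the two parameters of that Lemma taken to be $m':=m+2r-2$ and $n':=n+2r-1$, and then to show that \emph{every} term of the resulting sum for $\Omega_A^{m',n'}(T+Q)$ vanishes. I would treat only assertion (1) in detail; assertion (2) follows either by repeating the argument with identity~(\ref{xx2}) in place of~(\ref{xx1}), or, more economically, from the Remark that $T$ is skew $(A,(m,n))$-isosymmetric exactly when $iT\in(A;(m,n))$: indeed $iT$ and $iQ$ are again doubly commuting, $iQ$ is nilpotent of order $r$, and $i(T+Q)=iT+iQ$, so the skew statement is just assertion (1) applied to the pair $(iT,iQ)$.

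For (1), the Lemma gives
$$\Omega_{A}^{m',n'}(T+Q)=\sum_{k=0}^{n'}\sum_{j=0}^{n'-k}\sum_{i+l+h=m'}(-1)^k{n'\choose k}{n'-k\choose j}{m'\choose i,l,h}\,(T^*+Q^*)^i Q^{*l+j}\,\Omega_{A}^{h,\,n'-k-j}(T)\,T^l Q^{i+k}.$$
I would fix a tuple $(k,j,i,l,h)$ with $i+l+h=m'$ and collect the necessary conditions for its term to be nonzero. Since $Q^r=0=Q^{*r}$, one needs $l+j\le r-1$ (to keep $Q^{*l+j}\neq 0$) and $i+k\le r-1$ (to keep $Q^{i+k}\neq 0$); in particular $i,j,k,l\le r-1$, whence $i+l\le 2r-2$ and $k+j\le 2r-2$. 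Since $T$ is $(A,(m,n))$-isosymmetric, Proposition~\ref{waw}(2) forces $\Omega_{A}^{h,p}(T)=0$ whenever $h\ge m$ and $p\ge n$, so one also needs $h\le m-1$ or $n'-k-j\le n-1$.

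It then remains to see that these conditions clash. If $h\le m-1$, then $i+l=m'-h=(m+2r-2)-h\ge 2r-1$, contradicting $i+l\le 2r-2$. If instead $n'-k-j\le n-1$, then $k+j\ge n'-n+1=2r$, contradicting $k+j\le 2r-2$. Hence no term survives, $\Omega_{A}^{m',n'}(T+Q)=0$, and $T+Q\in\big(A;(m+2r-2,\,n+2r-1)\big)$. I do not expect a genuine obstacle here: the argument is index bookkeeping, and the only delicate point is to confirm that the three competing constraints — nilpotency acting on the left, nilpotency acting on the right, and the vanishing of the lower-order terms $\Omega_A^{h,p}(T)$ — are tight enough that precisely the shifts $m\mapsto m+2r-2$ and $n\mapsto n+2r-1$ annihilate everything, which is exactly what the two strict inequalities $2r-2<2r-1$ and $2r-2<2r$ above encode.
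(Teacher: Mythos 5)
Your argument is correct and is essentially the paper's own proof: both start from the expansion (\ref{xx1}) with $S=Q$ and the shifted parameters $m+2r-2$, $n+2r-1$, and both kill every term by playing the nilpotency bounds $l+j\le r-1$, $i+k\le r-1$ against the vanishing $\Omega_A^{h,p}(T)=0$ for $h\ge m$, $p\ge n$ from Proposition~\ref{waw}(2) — you phrase it as a contradiction, the paper as a case split, but the index bookkeeping is identical. Your reduction of part (2) to part (1) via the pair $(iT,iQ)$ is a valid and slightly slicker alternative to the paper's choice of simply rerunning the argument with (\ref{xx2}).
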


\begin{proof}
\begin{enumerate}
\item By (\ref{xx1}),
\begin{eqnarray*}
\Omega_{A}^{m+2r -2,n+2r -1}(T+Q)&=&\sum_{k=0}^{n+2r -1}\sum_{j=0}^{n+2r -1-k}\sum_{i+l+h=m+2r -2}(-1)^k{n+2r -1\choose k}\;{n+2r -1-k\choose j}\\
&&{m+2r -2\choose i,l,h}(T^*+Q^*)^i Q^{*l+j} \Omega_{A}^{h,n+2r -1-k-j}(T)T^l Q^{i+k},
\end{eqnarray*}
Note that if $(k+j)\geq2r$ or $i\geq r$ or $l\geq r$, then $(T^*+Q^*)^i Q^{*l+j} \Omega_{A}^{h,n+2r -1-k-j}(T)T^l Q^{i+k}=0$ since either $Q^{*l+j}=0$ or $Q^{i+k}=0$. On the other hand, if $(k+j)\leq2r-1$, then
$$
n+2r-1-(k+j)\geq n+2r-1-(2r-1)=n,
$$
and, if $i\leq r-1$ and $l\leq r-1$, then
$$
h=m+2r-2-i-l\geq m+2r-2-(r-1)-(r-1)\geq m.
$$
Hence, $T$ is an $(A,(h,n+2r -1-k-j))$-isosymmetry and $\Omega_{A}^{h,n+2r -1-k-j}(T)=0$. Therefore, $\Omega_{A}^{m+2r -2,n+2r -1}(T+Q)=0$ and $(T +Q)$ is $(A,(m+2r -2, n+2r -1))$-isosymmetric.
\item By (\ref{xx2}),
\begin{eqnarray*}
\Lambda_{A}^{m+2r -2,n+2r -1}(T+Q)&=&\sum_{k=0}^{n+2r -1}\sum_{j=0}^{n+2r -1-k}\sum_{i+l+h=m+2r -2}{n+2r -1\choose k}\;{n+2r -1-k\choose j}\\
&&{m+2r -2\choose i,l,h}(T^*+Q^*)^i Q^{*l+j} \Lambda_{A}^{h,n+2r -1-k-j}(T)T^l Q^{i+k}.
\end{eqnarray*}
By using this identity and arguing in the same way as in the first statement we complete the proof.
\end{enumerate}
\end{proof}
Theorem \ref{bibb} enjoys a number of interesting consequences that we now
stay. Some of these results have appeared in the literature with different proofs.
\begin{corollary}
Let $T,\,Q \in \mathcal{B}(H)$ satisfying $TQ=QT$ and $Q$ is $r$-nilpotent and fix an
integer $l\geq1$. Then, it holds:
\begin{enumerate}
\item If $T$ is $(A,m)$-isometric, then $(T + Q)^l$ is $(A,m + 2r - 2)$-isometric.
\item If $T$ is $(A,n)$-symmetric, then $(T + Q)^l$ is $(A,n + 2r - 1)$-symmetric.
\item If $T$ is skew $(A,n)$-symmetric, then $(T + Q)^l$ is skew $(A,n + 2r - 1)$-symmetric $(l$ odd$)$.
\end{enumerate}
\end{corollary}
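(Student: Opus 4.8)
The plan is to split the claim into two moves. First I would show that the perturbed operator $T+Q$ itself already lies in the target class — an $(A,m+2r-2)$-isometry in (1), an $(A,n+2r-1)$-symmetry in (2), a skew $(A,n+2r-1)$-symmetry in (3) — and then I would apply the Corollary stated right after Theorem \ref{therm11} to transfer this to the powers $(T+Q)^{l}$. Note that this second move is exactly why (3) is restricted to odd $l$: only odd powers of a skew symmetric operator stay skew symmetric, and the restriction is genuinely necessary (e.g.\ $iI$ is skew $(A,1)$-symmetric but $(iI)^{2}=-I$ is not skew symmetric).

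For the first move I would work in the algebra of elementary operators on $\mathcal{B}(H)$. With $L_{S}(X)=SX$ and $R_{S}(X)=XS$ one has
\[
\mathcal{I}^{m}_{A}(T)=\delta_T^{\,m}(A),\qquad \mathcal{S}^{n}_{A}(T)=\sigma_T^{\,n}(A),\qquad \zeta^{n}_{A}(T)=\tau_T^{\,n}(A),
\]
where $\delta_T=L_{T^{*}}R_{T}-\mathrm{id}$, $\sigma_T=L_{T^{*}}-R_{T}$ and $\tau_T=L_{T^{*}}+R_{T}$; in particular the vanishing of $\mathcal{I}^{m}_{A}(T)$ (resp.\ $\mathcal{S}^{n}_{A}(T)$, $\zeta^{n}_{A}(T)$) propagates to every higher index. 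Expanding $T+Q$ in these generators gives $\delta_{T+Q}=\delta_T+\nu$, $\sigma_{T+Q}=\sigma_T+\mu$, $\tau_{T+Q}=\tau_T+\eta$, with $\nu=L_{T^{*}}R_{Q}+L_{Q^{*}}R_{T}+L_{Q^{*}}R_{Q}$, $\mu=L_{Q^{*}}-R_{Q}$ and $\eta=L_{Q^{*}}+R_{Q}$; every monomial here carries at least one factor $L_{Q^{*}}$ or $R_{Q}$. Two observations drive the proof: (a) $TQ=QT$ makes $L_{T^{*}}$ commute with $L_{Q^{*}}$ and $R_{T}$ commute with $R_{Q}$ (left and right multiplications always commute), hence $\delta_T$ commutes with $\nu$, $\sigma_T$ with $\mu$, $\tau_T$ with $\eta$; (b) $Q^{r}=0$ gives $L_{Q^{*}}^{\,r}=L_{(Q^{r})^{*}}=0$ and $R_{Q}^{\,r}=R_{Q^{r}}=0$, and since every monomial of $\nu^{k}$ (resp.\ $\mu^{k}$, $\eta^{k}$) has combined $L_{Q^{*}}$/$R_{Q}$-degree at least $k$, a pigeonhole count gives $\nu^{2r-1}=\mu^{2r-1}=\eta^{2r-1}=0$.

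Now the binomial theorem, legitimate thanks to (a), gives for instance
\[
\mathcal{I}^{m+2r-2}_{A}(T+Q)=\delta_{T+Q}^{\,m+2r-2}(A)=\sum_{j=0}^{m+2r-2}\binom{m+2r-2}{j}\,\nu^{\,j}\,\delta_T^{\,m+2r-2-j}(A),
\]
and every term vanishes: for $j\geq 2r-1$ because $\nu^{j}=0$, and for $j\leq 2r-2$ because then $m+2r-2-j\geq m$, so $\delta_T^{\,m+2r-2-j}(A)=\mathcal{I}^{m+2r-2-j}_{A}(T)=0$. Hence $T+Q$ is $(A,m+2r-2)$-isometric; the verbatim computation with $\sigma_T,\mu$ (resp.\ $\tau_T,\eta$) shows $\mathcal{S}^{n+2r-1}_{A}(T+Q)=0$ (resp.\ $\zeta^{n+2r-1}_{A}(T+Q)=0$). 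It remains to apply the powers corollary following Theorem \ref{therm11} to $T+Q$. For part (3) one may instead run part (2) with $iT$ (which is $(A,n)$-symmetric) and $iQ$, then use that $iS$ is skew $(A,n)$-symmetric iff $S$ is and that $(A,n')$-symmetry is unchanged under multiplication by $-1$.

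I expect the one genuinely delicate point to be the nilpotency statement $\nu^{2r-1}=\mu^{2r-1}=\eta^{2r-1}=0$. Making the monomials of such a power unambiguous is justified only because $TQ=QT$ forces all four generators $L_{T^{*}},R_{T},L_{Q^{*}},R_{Q}$ to commute; once that is in place, a monomial whose total $L_{Q^{*}}/R_{Q}$-degree is at least $2r-1$ must have $L_{Q^{*}}$-degree $\geq r$ or $R_{Q}$-degree $\geq r$, hence be zero. Everything else is routine bookkeeping.
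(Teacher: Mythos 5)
Your proof is correct, and its computational core is the same one the paper relies on: expanding the relevant transform of $T+Q$ binomially in the commuting elementary operators $L_{T^*},L_{Q^*},R_T,R_Q$ and killing every term either because its $Q$-part has combined degree $\geq 2r-1$ (so one of $L_{Q^*},R_Q$ appears to a power $\geq r$) or because the surviving power of $\delta_T$ (resp.\ $\sigma_T$, $\tau_T$) has exponent $\geq m$ (resp.\ $\geq n$). This is exactly the multinomial identity (\ref{xx1}) specialized to the degenerate cases $n=0$ and $m=0$, followed by the powers statement of Theorem \ref{therm11}. The one substantive difference is in your favour: the paper offers the corollary as an immediate consequence of Theorem \ref{bibb}, but a literal application of that theorem to $T\in(A;(m,0))$ only yields $T+Q\in(A;(m+2r-2,\,2r-1))$, an isosymmetry class, not the asserted pure isometry class $(A;(m+2r-2,0))$ (and similarly for items (2) and (3)); your direct expansion proves the stronger statement that is actually claimed, so it fills a real gap rather than merely rephrasing. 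Two further points are worth recording. First, your derivation needs only $TQ=QT$ (left multiplications always commute with right multiplications, so the four generators pairwise commute), which matches the corollary's hypothesis and shows the ``doubly commuting'' assumption of Theorem \ref{bibb} is not required here; your pigeonhole argument for $\nu^{2r-1}=\mu^{2r-1}=\eta^{2r-1}=0$ and the counterexample $iI$ justifying the odd-$l$ restriction in (3) are both sound. Second, a minor observation: for (2) and (3) your computation in fact gives the slightly sharper conclusion $\mathcal{S}^{n+2r-2}_{A}(T+Q)=0$ (resp.\ $\zeta^{n+2r-2}_{A}(T+Q)=0$), since the term $j=2r-1$ is already annihilated by $\mu^{2r-1}=0$ while the terms $j\leq 2r-2$ leave an exponent at least $n$; the stated order $n+2r-1$ then follows a fortiori because vanishing of $\mathcal{S}^{k}_{A}$ propagates to all higher indices.
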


\begin{corollary}
Let $A\in \mathcal{B}(H)^+$ and $R,\,S\in\mathcal{B}(H).$ Assume that $R$ and $S$ are doubly commuting. If $R$ is $(A,(m, n))$-isosymmetric, then the operator
 $\textbf{K}=\left(
    \begin{array}{cc}
      R & S \\
      0 & R \\
    \end{array}
  \right)
$ on $\textbf{H}=H\oplus H$ is $(A\oplus A,(m+2, n+3))$-isosymmetric.
\end{corollary}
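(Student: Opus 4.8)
The plan is to write $\textbf{K}$ as a sum $\textbf{T}+\textbf{Q}$ on $\textbf{H}=H\oplus H$, where $\textbf{T}$ is $(A\oplus A,(m,n))$-isosymmetric and $\textbf{Q}$ is nilpotent of order $2$ and doubly commutes with $\textbf{T}$, and then to invoke Theorem \ref{bibb}. Concretely, put
\[
\textbf{T}=\begin{pmatrix} R & 0 \\ 0 & R \end{pmatrix},\qquad \textbf{Q}=\begin{pmatrix} 0 & S \\ 0 & 0 \end{pmatrix},
\]
so that $\textbf{K}=\textbf{T}+\textbf{Q}$ and $\textbf{Q}^{2}=0$; hence $\textbf{Q}$ is nilpotent of order $r=2$ when $S\neq0$ (and of order $1$ when $S=0$, a case which in any event is covered by Proposition \ref{waw}(2)).

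First I would check that $\textbf{T}=R\oplus R$ is $(A\oplus A,(m,n))$-isosymmetric. Since $\textbf{T}^{k}=R^{k}\oplus R^{k}$ and $\textbf{T}^{*k}=R^{*k}\oplus R^{*k}$ for every $k$, and since $A\oplus A$ acts blockwise, a term-by-term inspection of the defining sums yields $\mathcal{S}^{n}_{A\oplus A}(\textbf{T})=\mathcal{S}^{n}_{A}(R)\oplus\mathcal{S}^{n}_{A}(R)$ and, more generally,
\[
\Omega_{A\oplus A}^{m,n}(\textbf{T})=\Omega_{A}^{m,n}(R)\oplus\Omega_{A}^{m,n}(R).
\]
Since $R$ is $(A,(m,n))$-isosymmetric the right-hand side vanishes, so $\textbf{T}$ is $(A\oplus A,(m,n))$-isosymmetric.

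Next I would verify that $\textbf{T}$ and $\textbf{Q}$ are doubly commuting: a direct $2\times2$ block computation gives $\textbf{T}\textbf{Q}=\textbf{Q}\textbf{T}$ using $RS=SR$, and $\textbf{T}\textbf{Q}^{*}=\textbf{Q}^{*}\textbf{T}$ using $RS^{*}=S^{*}R$. It is precisely here that the hypothesis that $R$ and $S$ are \emph{doubly} commuting (rather than merely commuting) is used; this is the only step that is not a purely formal manipulation of block matrices. With these facts in hand, Theorem \ref{bibb}(1) applied with $A$ replaced by $A\oplus A$, $T$ by $\textbf{T}$, $Q$ by $\textbf{Q}$ and $r=2$ shows that $\textbf{K}=\textbf{T}+\textbf{Q}$ is $\big(A\oplus A,(m+2r-2,\,n+2r-1)\big)=\big(A\oplus A,(m+2,\,n+3)\big)$-isosymmetric, as claimed.
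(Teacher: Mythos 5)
Your proposal is correct and follows essentially the same route as the paper: decompose $\textbf{K}=\textbf{T}+\textbf{Q}$ with $\textbf{T}=R\oplus R$ and $\textbf{Q}=\left(\begin{smallmatrix}0&S\\0&0\end{smallmatrix}\right)$, note that $\textbf{Q}$ is $2$-nilpotent and doubly commutes with $\textbf{T}$, and apply Theorem \ref{bibb} with $r=2$. Your write-up is in fact slightly more careful than the paper's, since you explicitly verify that $R\oplus R$ is $(A\oplus A,(m,n))$-isosymmetric and that the double commutation hypothesis on $R$ and $S$ is exactly what makes $\textbf{T}$ and $\textbf{Q}$ doubly commute.
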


\begin{proof}
Put
$\textbf{T}=\left(
     \begin{array}{cc}
       R & 0 \\
       0 & R \\
     \end{array}
   \right)
$ and
$\textbf{N}=\left(
     \begin{array}{cc}
       0 & S \\
       0 & 0 \\
     \end{array}
   \right)
.$ It follows that $\textbf{A}$ is positive on $\textbf{H}$, $\textbf{K}=\textbf{T}+\textbf{N}$, $\textbf{N}$ is $2$-nilpotent and $\textbf{T}$ and $\textbf{N}$ are doubly commuting. Applying Theorem \ref{bibb}, we deduce that $\textbf{K}$ is $(\textbf{A},(m+2, n+3))$-isosymmetric.

\end{proof}

\section{ A characterization of $(2\times2)$ upper triangular operator matrices in $(A;(1,1))$}
\label{S2}
For $A\in \mathcal{B}(H)^+$ and $T\in \mathcal{B}(H)$, let consider the following identities:
\begin{equation}\label{pol1}
\Omega_{A}^{1,1}(T)=T^{*}\mathcal{I}^{1}_{A}(T)-\mathcal{I}^{1}_{A}(T)T=T^{*2}AT-T^{*}AT^{2}-T^{*}A+AT,
\end{equation}
\begin{equation}\label{pol2}
\Lambda_{A}^{1,1}(T)=\mathcal{I}^{1}_{A}(T)T+T^{*}\mathcal{I}^{1}_{A}(T)T=T^{*2}AT+T^{*}AT^{2}-T^{*}A-AT.
\end{equation}
It holds, from (\ref{pol1}) and (\ref{pol2}), that
\begin{equation}\label{pol11}
\Omega_{A}^{1,1}(T)+\Lambda_{A}^{1,1}(T)=2(T^{*2}AT-T^{*}A)=2T^{*}\mathcal{I}^{1}_{A}(T),
\end{equation}
Note that $\Omega_{A}^{1,1}(T)$ is skew symmetric and $\Lambda_{A}^{1,1}(T)$ is symmetric. An operator $T$ is said to be $A$-isosymmetric (resp. skew $A$-isosymmetric) if $\Omega_{A}^{1,1}(T)=0$ (resp. $\Lambda_{A}^{1,1}(T)=0$).

\begin{example}\label{example}
\noindent
\begin{enumerate}
\item Let $H=\mathbb{C}^2$, $T = \left(
            \begin{array}{cc}
              a &  b \\
              c &  d \\
            \end{array}
          \right)
$ $(a,\,b,\,c,\,d\in \mathbb{R})$ and
$A=\left(
 \begin{array}{cc}
 0 & 0 \\
 0 & 1 \\
 \end{array}
 \right)$. It holds that,
\begin{eqnarray*}
\Omega_{I}^{1,1}(T)&=&\left(
            \begin{array}{cc}
             0 &  (b-c)(cb-ad+1 \\
              (b-c)(ad-bc-1) &  0 \\
            \end{array}
          \right),\\
\Omega_{A}^{1,1}(T)&=&\left(
            \begin{array}{cc}
             0 &  -c(bc-ad+1 \\
              -c(ad-bc-1) &  0 \\
            \end{array}
          \right).
\end{eqnarray*}
\begin{enumerate}
\item $T$ is $I$-isosymmetric if and only if \big($b=c$ or $ad-bc=1$\big).
\item $T$ is $A$-isosymmetric if and only if \big($c=0$ or $ad-bc=1$\big).
\end{enumerate}
\item Let $H=\mathbb{C}^2$, $T = \left(
            \begin{array}{cc}
              a &  b \\
              0 &  d \\
            \end{array}
          \right)
$ $(a,\,b,\,d\in \mathbb{R})$ and
$A=\left(
 \begin{array}{cc}
 0 & 0 \\
 0 & 1 \\
 \end{array}
 \right)$. It holds from (\ref{pol11}) that,
\begin{eqnarray*}
\Lambda_{I}^{1,1}(T)&=&\left(
            \begin{array}{cc}
             2a(a^2-1) &  b(2a^2-1+ad) \\
            b\big( 2a^2+ad-1\big) &  2(b^2(a+d)+d^3-d) \\
            \end{array}
          \right),\\
\Lambda_{A}^{1,1}(T)&=&\left(
            \begin{array}{cc}
              0 & 0 \\
              0 & 2d(1-d^2)\\
            \end{array}
          \right).
\end{eqnarray*}
\begin{enumerate}
\item $T$ is skew $I$-isosymmetric if and only if one of the following situations holds true
\begin{enumerate}
\item $a=0$, $b=0$ and $d=\pm1$.
\item $a=1$, $b=0$ and \big($d=0$ or $d=\pm1$\big).
\item $a=1$ and $d=-1.$
\item $a=-1$, $b=0$ and \big($d=0$ or $d=\pm1$\big).
\item $a=-1$ and $d=1.$
\end{enumerate}
\item $T$ is skew $A$-isosymmetric if and only if \big($d=0$ or $d=\pm1$\big).
\end{enumerate}
\end{enumerate}
\end{example}

\begin{remark} Let $A\in \mathcal{B}(H)^+$ and $T\in \mathcal{B}(H)$.
\begin{enumerate}
\item $T$ is an $A$-isosymmetry (resp. a skew $A$-isosymmetry) if and only if $-T$ is an $A$-isosymmetry (resp. a skew $A$-isosymmetry).
\item If $T$ is $A$-expansive, then: $T$ is $A$-isosymmetric (resp. skew $A$-isosymmetric) if and only if $T$ is $\mathcal{I}^{1}_{A}(T)$-symmetric (resp. skew $\mathcal{I}^{1}_{A}(T)$-symmetric).
\item The following statements are equivalent
\begin{enumerate}
\item $T$ is $A$-isosymmetric.
\item $T^{*}\mathcal{S}^{1}_{A}(T)T=\mathcal{S}^{1}_{A}(T).$
\item $T^{*}\mathcal{I}^{1}_{A}(T)=\mathcal{I}^{1}_{A}(T)T.$
\end{enumerate}
\item For an invertible operator $A$, let
$$R:=\frac{1}{2}\big(T+A^{-1}T^* A\big), \qquad S:=\frac{1}{2}\big(T-A^{-1}T^* A\big).$$
Then, $R$ is $A$-isosymmetric, $S$ is skew $A$-isosymmetric and $T=R+S$.
\end{enumerate}
\end{remark}

In the following results, we consider a class of $(2\times2)$ upper triangular operator matrices. We aim to extend \cite[Lemma 4.10, Lemma 4.27]{mark1}, \cite[Theorem 2.10, Corollary 2.11, Theorem 2.12]{t1} established for isosymmetries and complex isosymmetries, respectively, to our developed approach about $A$-isosymmetries.
\begin{theorem}Let $A\in \mathcal{B}(H)^+$, $N,E,X\in \mathcal{B}(H)$ and $\textbf{T}=\left(
                                                                                            \begin{array}{cc}
                                                                                              N & E \\
                                                                                              0 & X \\
                                                                                            \end{array}
                                                                                          \right)
$ on $\textbf{H}=H\oplus H$. Then, the following statements hold.
\begin{enumerate}
\item Assume that $E=NEX$, $N^*AE=AEX$ and $N$ is $A$-isometric. Then, $\textbf{T}$ is $A$-isosymmetric if and only if $X$ is $A$-isosymmetric.
\item Assume that $N$ is $A$-symmetric and $NE=EX$. Then, $\textbf{T}$ is $A$-isosymmetric if and only if $X$ is $A$-isosymmetric and $AE=ANEX$ holds.
\end{enumerate}
\end{theorem}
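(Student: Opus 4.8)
The plan is to expand $\Omega_{\textbf{A}}^{1,1}(\textbf{T})$ in $2\times2$ block form, where the ambient positive operator on $\textbf{H}=H\oplus H$ is the diagonal operator $\textbf{A}:=A\oplus A$; thus ``$\textbf{T}$ is $A$-isosymmetric'' means $\Omega_{\textbf{A}}^{1,1}(\textbf{T})=0$, and by (\ref{pol1}) this operator equals $\textbf{T}^{*2}\textbf{A}\textbf{T}-\textbf{T}^{*}\textbf{A}\textbf{T}^{2}-\textbf{T}^{*}\textbf{A}+\textbf{A}\textbf{T}$. The first step, which carries essentially all the computational weight, is to compute the four blocks of each of $\textbf{T}^{*2}\textbf{A}\textbf{T}$, $\textbf{T}^{*}\textbf{A}\textbf{T}^{2}$, $\textbf{T}^{*}\textbf{A}$ and $\textbf{A}\textbf{T}$ from $\textbf{T}=\left(\begin{array}{cc}N&E\\0&X\end{array}\right)$ and collect terms. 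A direct bookkeeping gives
\begin{equation*}
\Omega_{\textbf{A}}^{1,1}(\textbf{T})=\begin{pmatrix} \Omega_{A}^{1,1}(N) & \omega_{12} \\ -\,\omega_{12}^{*} & \Omega_{A}^{1,1}(X)+\omega_{22} \end{pmatrix},
\end{equation*}
with $\omega_{12}=N^{*2}AE-N^{*}ANE-N^{*}AEX+AE$ and $\omega_{22}=E^{*}N^{*}AE-E^{*}ANE+X^{*}E^{*}AE-E^{*}AEX$; the $(2,1)$ block is $-\omega_{12}^{*}$ because $\Omega^{1,1}$ is skew ($n=1$ is odd). Hence $\textbf{T}$ is $A$-isosymmetric if and only if the three operator identities $\Omega_{A}^{1,1}(N)=0$, $\omega_{12}=0$ and $\Omega_{A}^{1,1}(X)+\omega_{22}=0$ hold simultaneously, and the rest of the proof is the simplification of these under the two hypothesis sets.

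For assertion $(1)$, assume $N$ is $A$-isometric, i.e. $N^{*}AN=A$. Then $N^{*2}AN=N^{*}(N^{*}AN)=N^{*}A$ and $N^{*}AN^{2}=(N^{*}AN)N=AN$, so $\Omega_{A}^{1,1}(N)=N^{*}A-AN-N^{*}A+AN=0$ automatically. Using $N^{*}AE=AEX$ one gets $N^{*2}AE=N^{*}(N^{*}AE)=N^{*}AEX$ and, from $N^{*}AN=A$, $N^{*}ANE=AE$; therefore $\omega_{12}=N^{*}AEX-AE-N^{*}AEX+AE=0$. Passing to adjoints in $N^{*}AE=AEX$ gives $E^{*}AN=X^{*}E^{*}A$, so $E^{*}N^{*}AE=E^{*}(AEX)=E^{*}AEX$ and $E^{*}ANE=(E^{*}AN)E=X^{*}E^{*}AE$, whence $\omega_{22}=E^{*}AEX-X^{*}E^{*}AE+X^{*}E^{*}AE-E^{*}AEX=0$. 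Thus the first two identities are automatic and the third reduces to $\Omega_{A}^{1,1}(X)=0$; that is, $\textbf{T}$ is $A$-isosymmetric if and only if $X$ is.

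For assertion $(2)$, assume $N^{*}A=AN$ and $NE=EX$. From $N^{*}A=AN$ we get $N^{*2}AN=N^{*}AN^{2}=AN^{3}$, hence $\Omega_{A}^{1,1}(N)=AN^{3}-AN^{3}-AN+AN=0$. In $\omega_{22}$, $N^{*}A=AN$ gives $E^{*}N^{*}AE=E^{*}ANE$, cancelling the first two terms; and $NE=EX$ gives $E^{*}AEX=E^{*}ANE$, whose adjoint, using $N^{*}A=AN$ once more, is $X^{*}E^{*}AE=E^{*}N^{*}AE=E^{*}ANE=E^{*}AEX$, so $\omega_{22}=X^{*}E^{*}AE-E^{*}AEX=0$. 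Finally, $NE=EX$ turns $N^{*}AEX$ into $N^{*}ANE$, while $N^{*}A=AN$ turns both $N^{*2}AE$ and $N^{*}ANE$ into $AN^{2}E$, so $\omega_{12}=AN^{2}E-2AN^{2}E+AE=AE-AN^{2}E$; since $N^{2}E=N(NE)=N(EX)=(NE)X=EX^{2}$ we have $AN^{2}E=AEX^{2}=A(NE)X=ANEX$, hence $\omega_{12}=AE-ANEX$. Consequently $\textbf{T}$ is $A$-isosymmetric if and only if $\Omega_{A}^{1,1}(X)=0$ and $AE=ANEX$, i.e. $X$ is $A$-isosymmetric and $AE=ANEX$.

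The main obstacle is not conceptual but the careful block expansion of $\Omega_{\textbf{A}}^{1,1}(\textbf{T})$: the blocks do not commute, so the precise order of the factors in each of the sixteen contributing products must be tracked. Once the displayed block formula is in hand, every further step is a one-line substitution using $N^{*}AN=A$ (for $(1)$) or $N^{*}A=AN$ together with $NE=EX$ (for $(2)$). It is worth emphasizing that, in contrast with $(1)$, the off-diagonal block in case $(2)$ does not vanish by itself; it collapses exactly to $AE-ANEX$, which is precisely why the additional constraint $AE=ANEX$ must be added to the conclusion of $(2)$.
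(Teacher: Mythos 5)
Your proof is correct and follows essentially the same route as the paper: a direct block expansion of the defining operator identity for $\textbf{T}$ with respect to $\textbf{A}=A\oplus A$, followed by simplification of the resulting three block equations under each hypothesis set. The only (harmless) divergence is that your argument for assertion (1) never invokes the hypothesis $E=NEX$ --- the relations $N^{*}AN=A$ and $N^{*}AE=AEX$ alone already force $\omega_{12}=\omega_{22}=0$ --- so you have in fact proved a marginally stronger statement than the one asserted.
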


\begin{proof}
\begin{enumerate}
\item Since $A\in \mathcal{B}(H)^+$, we have $\textbf{A}=A\oplus A \in \mathcal{B}\big(H\oplus H\big)^+.$ On the other hand,
$$
\mathcal{I}^{1}_{A}(\textbf{T})
=\left(
   \begin{array}{cc}
     \mathcal{I}^{1}_{A}(N) & N^* AE \\
     E^*AN & E^*AE+\mathcal{I}^{1}_{A}(X) \\
   \end{array}
 \right).
$$
It holds from that
\begin{eqnarray}\label{riif1}
\textbf{T}^*\mathcal{I}^{1}_{A}(\textbf{T})
&=&\left(
  \begin{array}{cc}
    N^*\mathcal{I}^{1}_{A}(N)  & N^{*2}AE \\
    E^*\mathcal{I}^{1}_{A}(N)+X^*E^*AN  & E^*N^*AE+X^*E^*AE+X^*\mathcal{I}^{1}_{A}(X) \\
  \end{array}
\right),
\end{eqnarray}

\begin{eqnarray}\label{riif2}
\mathcal{I}^{1}_{A}(\textbf{T})\textbf{T}
&=&\left(
   \begin{array}{cc}
     \mathcal{I}^{1}_{A}(N)N & \mathcal{I}^{1}_{A}(N)E+N^*AEX \\
     E^*AN^2 & E^*ANE+E^*AEX+\mathcal{I}^{1}_{A}(X)X \\
   \end{array}
 \right).
\end{eqnarray}
By equations (\ref{riif1}) and (\ref{riif2}), $\textbf{T}$ is $A$-isosymmetric if and only if
\begin{equation}\label{riif3}
\left\{
  \begin{array}{ll}
    N^*\mathcal{I}^{1}_{A}(N)=\mathcal{I}^{1}_{A}(N)N, & \hbox{ } \\
    E^*\mathcal{I}^{1}_{A}(N)+X^*E^*AN=E^*AN^2, & \hbox{ } \\
    N^{*2}AE=\mathcal{I}^{1}_{A}(N)E+N^*AEX, & \hbox{} \\
    \big(E^*N^*AE+X^*E^*AE\big)+X^*\mathcal{I}^{1}_{A}(X)=\big(E^*ANE+E^*AEX\big)+\mathcal{I}^{1}_{A}(X)X. & \hbox{}
  \end{array}
\right.
\end{equation}
Assume that $N$ is $A$-isometric, that is $\mathcal{I}^{1}_{A}(N)=N^*AN-A=0$. It holds from that
\begin{equation}\label{riif4}
\left\{
  \begin{array}{ll}
    X^*E^*AN=E^*AN^2, & \hbox{ } \\
    N^{*2}AE=N^*AEX, & \hbox{} \\
    \big(E^*N^*AE+X^*E^*AE\big)+X^*\mathcal{I}^{1}_{A}(X)=\big(E^*ANE+E^*AEX\big)+\mathcal{I}^{1}_{A}(X)X. & \hbox{}
  \end{array}
\right.
\end{equation}
If $E=NEX$ and $N^*AE=AEX$, then the first and the second equation of (\ref{riif4}) hold. Moreover, we have
\begin{eqnarray*}
E^*ANE+E^*AEX&=&X^*E^*\underbrace{N^*AN}E+E^*N^*AE\\
&=&X^*E^*AE+E^*N^*AE.
\end{eqnarray*}
The above equation gives
$$X^*\mathcal{I}^{1}_{A}(X)=\mathcal{I}^{1}_{A}(X)X.$$
Hence, $X$ is $A$-isosymmetric. Arguing as above, we can show the converse of the claim.
\item Assume that $N$ is $A$-symmetric, that is $\mathcal{S}^{1}_{A}(N)=N^*A-AN=0$ and $\textbf{T}$ is $A$-isosymmetric. From the assumptions, it holds $X^*E^*A=E^*N^*A=E^*AN,$ and so $N^*AE=AEX.$ Hence, it follows
\begin{equation}\label{riif5}
\left\{
  \begin{array}{ll}
   X^*E^*AE=E^*ANE, & \hbox{} \\
   E^*N^*AE=E^*AEX. & \hbox{}
  \end{array}
\right.
\end{equation}
\end{enumerate}
From (\ref{riif3}) and (\ref{riif5}), we obtain
$$E^*N^*AE+X^*E^*AE=E^*ANE+E^*AEX. $$
So, $X^*\mathcal{I}^{1}_{A}(X)=\mathcal{I}^{1}_{A}(X)X.$ Hence, $X$ is $A$-isosymmetric. On the other hand, (\ref{riif3}) becomes
$$
\left\{
  \begin{array}{ll}
    E^*(N^{*2}-I)A+X^*E^*N^* A=E^*N^{*2}A, & \hbox{} \\
    AN^{2}E=A(N^2-I)E+ANEX. & \hbox{}
  \end{array}
\right.
$$
This gives that
$$
\left\{
  \begin{array}{ll}
    E^*A=X^*E^*N^* A, & \hbox{} \\
    AE=ANEX. & \hbox{}
  \end{array}
\right.
$$
By similar arguments, we can show the other implication.

\end{proof}

\begin{corollary}
Let $A\in \mathcal{B}(H)^+$, $N,E,X\in \mathcal{B}(H)$ and $\textbf{T}=\left(
                                                                                            \begin{array}{cc}
                                                                                              N & E \\
                                                                                              0 & X \\
                                                                                            \end{array}
                                                                                          \right)
$ on $\textbf{H}=H\oplus H$ with $N$ an $A$-isometry. If $N^*AE=0$, then $\textbf{T}$ is $A$-isosymmetric.
\end{corollary}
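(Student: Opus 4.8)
The plan is to derive this Corollary directly from the preceding Theorem, part (1), by checking that the hypothesis $N^*AE = 0$, together with $N$ being $A$-isometric, forces the two structural conditions $E = NEX$ and $N^*AE = AEX$ required there. To that end, first observe that since $N$ is $A$-isometric we have $N^*AN = A$. Multiplying the identity $N^*AE = 0$ on the left by $N$ is not immediately productive, so instead I would work with $N^*AE = 0$ as follows: applying $N^*A$ on the left of $N^*AE = 0$ gives $N^{*2}AE = 0$, and more generally $N^{*k}AE = 0$ for all $k \ge 1$. Likewise, $N^*AE = 0$ gives $AEX = N^*AE \cdot (\text{something})$? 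No — rather, the cleaner route is to note that $N^*AE = 0$ makes the second required equation $N^*AE = AEX$ equivalent to $AEX = 0$, which need not hold in general. So a more careful bookkeeping is needed.

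The correct approach is to go back to the system (\ref{riif4}) in the proof of the Theorem, which under the $A$-isometry hypothesis on $N$ reduces the $A$-isosymmetry of $\textbf{T}$ to the three equations there. I would substitute $N^*AE = 0$ into each. The first equation, $X^*E^*AN = E^*AN^2$, is the adjoint-type relation; taking adjoints of $N^*AE = 0$ gives $E^*AN = 0$ (using $A^* = A$ and $(N^*A)^* = AN$), hence both sides of the first equation vanish. For the second equation, $N^{*2}AE = N^*AEX$: the left side is $N^*(N^*AE) = 0$, and the right side is $(N^*AE)X = 0$, so it holds. For the third equation, one computes $E^*ANE + E^*AEX$ versus $E^*N^*AE + X^*E^*AE$; since $E^*AN = 0$ we get $E^*ANE = 0$, and since $N^*AE = 0$ we get $E^*N^*AE = 0$ — wait, $E^*N^*AE = (N^*AE)^* \cdot$? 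Let me instead note $E^*N^*AE = (AE)^*NE$? The cleanest is: $E^*N^*AE$ — here $N^*A = AN^\sharp$ type manipulations aren't available since $A$ need not be injective, so I would simply observe $E^*(N^*AE) = E^* \cdot 0 = 0$ only if the grouping is $N^*A$ times $E$; but $E^*N^*AE$ groups as $E^* \cdot (N^*AE) = 0$ indeed. Similarly $E^*ANE = (E^*AN)E = 0 \cdot E = 0$. So the third equation collapses to $X^*\mathcal{I}^{1}_{A}(X) = \mathcal{I}^{1}_{A}(X)X$, which is precisely $X$ being $A$-isosymmetric — but that is the conclusion we want about $\textbf{T}$, not a hypothesis.

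Thus the honest structure of the argument is: under $N$ $A$-isometric and $N^*AE = 0$, the system (\ref{riif4}) reduces to the single equation $X^*\mathcal{I}^{1}_{A}(X) = \mathcal{I}^{1}_{A}(X)X$, i.e. $\textbf{T}$ is $A$-isosymmetric if and only if $X$ is $A$-isosymmetric. To obtain the \emph{unconditional} statement of the Corollary, I suspect the intended reading is that the Corollary asserts the equivalence "$\textbf{T}$ is $A$-isosymmetric $\iff$ $X$ is $A$-isosymmetric" under the hypothesis $N^*AE = 0$ — recovering Theorem part (1) with the structural conditions automatically satisfied — or that there is an implicit additional hypothesis (such as $X = N$, or $E = NEX$) carried over. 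I would therefore write the proof as: verify that $N^*AE = 0$ implies $E^*AN = 0$ by taking adjoints; substitute into (\ref{riif4}); observe the first two equations hold trivially and the third reduces to the $A$-isosymmetry identity for $X$; and conclude via the Theorem. The main obstacle is exactly this: pinning down the precise logical content of the Corollary as stated, since a literal reading ("if $N^*AE=0$ then $\textbf{T}$ is $A$-isosymmetric") is false without control on $X$ — so the proof must either invoke the equivalence form of Theorem (1) or flag the needed hypothesis on $X$.
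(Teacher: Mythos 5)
Your analysis follows the same route as the paper's own proof: substitute $N^*AE=0$ (and its adjoint $E^*AN=0$, valid since $A=A^*$) into the system (\ref{riif4}), observe that the first two equations become trivial, and examine what remains of the third. One computational slip: the third equation does \emph{not} collapse to $X^*\mathcal{I}^{1}_{A}(X)=\mathcal{I}^{1}_{A}(X)X$. Only the terms $E^*N^*AE$ and $E^*ANE$ vanish; the terms $X^*E^*AE$ and $E^*AEX$ survive, so the residual condition is
$$X^*\big(E^*AE+\mathcal{I}^{1}_{A}(X)\big)=\big(E^*AE+\mathcal{I}^{1}_{A}(X)\big)X,$$
which is exactly the displayed identity $X^*(E^*AE+X^*AX-A)=(E^*AE+X^*AX-A)X$ appearing in the paper's proof, and is not the same as saying that $X$ is $A$-isosymmetric.

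That said, your main diagnosis is correct, and it applies to the paper as much as to your own attempt. The paper's proof establishes only the equivalence: under the stated hypotheses, $\textbf{T}$ is $A$-isosymmetric if and only if the residual identity above holds; it then stops without verifying that identity, which does not follow from $N$ being an $A$-isometry and $N^*AE=0$, since neither hypothesis constrains $X$. The corollary as literally stated is false: take $A=I$, $N=I$, $E=0$ and $X=\left(\begin{smallmatrix} 0 & 2 \\ 0 & 0 \end{smallmatrix}\right)$ on $H=\mathbb{C}^2$; then $N$ is an $A$-isometry and $N^*AE=0$, but $X^*(X^*X-I)\neq(X^*X-I)X$, so $\textbf{T}=N\oplus X$ is not $A$-isosymmetric. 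A correct version must either be phrased as the equivalence you identified (``$\textbf{T}$ is $A$-isosymmetric iff $X^*\big(E^*AE+\mathcal{I}^{1}_{A}(X)\big)=\big(E^*AE+\mathcal{I}^{1}_{A}(X)\big)X$'') or carry that identity as an additional hypothesis. So the gap you flag is genuine, but it lies in the statement and in the paper's own argument rather than in anything you introduced; once the $E^*AE$ terms are restored, your reduction coincides with the paper's.
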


\begin{proof}
Since $N$ is an $A$-isometry and $N^*AE=0$, it holds from (\ref{riif4}) that
$$X^*(E^*AE+X^*AX-A)=(E^*AE+X^*AX-A)X.$$
On the other hand,
$$\hbox{\big($T$ is $A$-isosymmetric\big)}\Longleftrightarrow \big(X^*(E^*AE+X^*AX-A)=(E^*AE+X^*AX-A)X\big).$$
This completes the proof.

\end{proof}

In the following theorem, we describe the block operator form of an $A$-isosymmetry such that $\mathcal{M}=\ker\big(\mathcal{I}^{1}_{A}(\textbf{T})\big)\neq\{0\}$.

\begin{theorem}
Let $\textbf{T} \in \mathcal{B}(H)$ and $A\in \mathcal{B}(H)^+$. Assume that $\mathcal{M}$ is invariant for $A$ and for $\textbf{T}$. Then $\textbf{T}$ has the form
\begin{equation}\label{riif6}
\textbf{T}=\left(
                                                                                            \begin{array}{cc}
                                                                                              N & E \\
                                                                                              0 & X \\
                                                                                            \end{array}
                                                                                          \right)\;\;\hbox{on $\mathcal{M}\oplus \mathcal{M}^\perp$},\quad N\in \mathcal{B}(\mathcal{M}),\,E\in\mathcal{B}(\mathcal{M}^\perp,\,\mathcal{M}),X\in \mathcal{B}(\mathcal{M}^\perp),
\end{equation}
such that $N$ is an $A_{|\mathcal{M}}$-isometry and $E^*A_{|\mathcal{M}}N=0.$
\end{theorem}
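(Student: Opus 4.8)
The plan is to exploit the hypothesis that $\mathcal{M}=\ker\bigl(\mathcal{I}^{1}_{A}(\mathbf{T})\bigr)$ is invariant for both $A$ and $\mathbf{T}$, so that $\mathbf{T}$ automatically has the upper-triangular block form $\mathbf{T}=\left(\begin{smallmatrix} N & E\\ 0 & X\end{smallmatrix}\right)$ on $\mathcal{M}\oplus\mathcal{M}^{\perp}$, and likewise $A=\left(\begin{smallmatrix} A_{1} & A_{2}\\ 0 & A_{3}\end{smallmatrix}\right)$ with $A_{1}=A_{|\mathcal{M}}$. First I would record, using the invariance of $\mathcal{M}$ under $A$ together with the fact that $A\ge 0$, that the off-diagonal block $A_{2}$ must vanish, i.e. $\mathcal{M}$ reduces $A$ and $A=A_{1}\oplus A_{3}$ with $A_{1}\ge 0$ on $\mathcal{M}$; indeed a positive operator leaving a subspace invariant leaves it reducing. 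Then I would compute $\mathcal{I}^{1}_{A}(\mathbf{T})=\mathbf{T}^{*}A\mathbf{T}-A$ blockwise (as already done in the proof of the preceding theorem, equation for $\mathcal{I}^{1}_{A}(\mathbf{T})$ with $N,E,X$), obtaining on $\mathcal{M}\oplus\mathcal{M}^{\perp}$ the matrix
\[
\mathcal{I}^{1}_{A}(\mathbf{T})=\begin{pmatrix} \mathcal{I}^{1}_{A_{1}}(N) & N^{*}A_{1}E\\ E^{*}A_{1}N & E^{*}A_{1}E+\mathcal{I}^{1}_{A_{3}}(X)\end{pmatrix}.
\]

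The key step is then to read off what it means for $\mathcal{M}=\mathcal{M}\oplus\{0\}$ to be exactly (or at least contained in) the kernel of this operator. A vector $u\oplus 0$ with $u\in\mathcal{M}$ lies in $\ker\bigl(\mathcal{I}^{1}_{A}(\mathbf{T})\bigr)$ precisely when $\mathcal{I}^{1}_{A_{1}}(N)u=0$ and $E^{*}A_{1}Nu=0$; since by definition \emph{every} $u\in\mathcal{M}$ has this property, we get simultaneously $\mathcal{I}^{1}_{A_{1}}(N)=0$ on $\mathcal{M}$, i.e. $N$ is an $A_{|\mathcal{M}}$-isometry, and $E^{*}A_{1}N=E^{*}A_{|\mathcal{M}}N=0$. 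That already yields both asserted conclusions. I would then note that the block form and the conclusion are exactly parallel to the decomposition used in the previous theorem, so no separate verification of boundedness of $N,E,X$ is needed — it is the standard decomposition of an operator along an invariant subspace.

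The main obstacle I anticipate is the vanishing of the off-diagonal block of $A$: one must justify carefully that invariance of $\mathcal{M}$ for a \emph{positive} operator $A$ forces $A\mathcal{M}^{\perp}\subseteq\mathcal{M}^{\perp}$ as well, so that $A=A_{1}\oplus A_{3}$; without this the formula for $\mathcal{I}^{1}_{A}(\mathbf{T})$ above is not valid and the clean conclusion $E^{*}A_{|\mathcal{M}}N=0$ could fail. The argument is short — if $P$ is the orthogonal projection onto $\mathcal{M}$ then $AP=PAP$ by invariance, and taking adjoints of the selfadjoint $A$ gives $PA=PAP$, hence $AP=PA$ — but it is the one place where the hypothesis "$\mathcal{M}$ invariant for $A$" is used in an essential, non-formal way, and it is worth stating explicitly. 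Everything after that is routine blockwise computation of $\mathbf{T}^{*}A\mathbf{T}-A$ and reading off the $(1,1)$ and $(2,1)$ entries on $\mathcal{M}$.
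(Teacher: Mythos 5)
Your proposal is correct and follows essentially the same route as the paper: decompose $A=A_{1}\oplus A_{2}$ using that the invariant subspace $\mathcal{M}$ of the self-adjoint $A$ is in fact reducing, write $\mathcal{I}^{1}_{A}(\mathbf{T})$ blockwise, and read off the $(1,1)$ and $(2,1)$ entries on $\mathcal{M}=\ker\bigl(\mathcal{I}^{1}_{A}(\mathbf{T})\bigr)$. Your explicit justification that $AP=PAP$ implies $AP=PA$ is a welcome detail the paper only asserts.
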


\begin{proof}Since $\mathcal{M}$ is invariant for $A$, it follows that $\mathcal{M}^\perp$ is invariant for $A.$ Moreover, since $A\in \mathcal{B}(H)^+$, we have  $A_1=A_{|\mathcal{M}}\geq0$, $A_2=A_{|\mathcal{M}^\perp}\geq0$ and $A=A_1\oplus A_2.$ If $\mathcal{M}$ is invariant for $\textbf{T}$, then $\textbf{T}$ has the block operator form (\ref{riif6}). Hence it follow that
$$
\mathcal{I}^{1}_{A}(\textbf{T})
=\left(
   \begin{array}{cc}
     N^*A_1N-A_1 & N^* A_1E \\
     E^*A_1N & E^*A_1E+X^*A_2X-A_2 \\
   \end{array}
 \right)\quad\hbox{on $\mathcal{M}\oplus \mathcal{M}^\perp$}.
$$
Since $\mathcal{M}=\ker\big(\mathcal{I}^{1}_{A}(T)\big)$, it holds $\mathcal{I}^{1}_{A}(T)(x\oplus0)=0$ for $x\in \mathcal{M}.$ Moreover, we have $N^*A_1N-A_1=0$ and $E^*A_1N =0$ on $\mathcal{M}$. Hence, $N$ is an $A_1$-isometry.

\end{proof}


\begin{thebibliography}{00}
\bibitem{rr01} J. Agler, M. Stankus, \textit{$m$-Isometric Transformations of Hilbert Spaces I}.  Integral Equations and Operator Theory {\small\bf 21} (1995), no. 4, 383-429. \bibitem{x2} J. Agler, M. Stankus, \textit{$m$-isometric transformations of Hilbert space II}. Integral Equations and Operator Theory {\small\bf 23} (1995),
 no. 1, 1-48. \bibitem{x3} J. Agler, M. Stankus, \textit{$m$-isometric transformations of Hilbert space III}. Integral Equations and Operator Theory {\small\bf 24}
 (1996), no. 4, 379-421. \bibitem{faghih} M. F. Ahmadi, \textit{Powers of A-m-Isometric Operators and TheirSupercyclicity}, Bull. Malays. Math. Sci. Soc. (2016) 39: 901–911, DOI 10.1007/s40840-015-0201-6 \bibitem{b2} M. Ch\={o}, E. Ko and  J. E. Lee, \textit{Skew $m$-Complex Symmetric Operators}, Filomat 33:10 (2019), 2975–2983. \bibitem{t1} M. Ch\={o}, J. E. Lee, T. Prasad, and K. Tanahashi, \textit{Complex isosymmetric operators}, Adv. Oper. Theory. Volume $3$, Number $3$ (2018), 620-631. \bibitem{t2} M. Ch\={o}, J. E. Lee, K. Tanahashi and J. Tomiyama, \textit{On $[m,C]$-symmetric Operators}, KYUNGPOOK Math. J. 58(2018), 637-650.
\bibitem{a.2} M. Ch\={o} and S. A. O. Mahmoud, \textit{$(A, m)$-Symmetric commuting tuples of operators on a Hilbert Space}, MIA {\small\bf 22} (2019),
 no. 3, 931-947. \bibitem{Dudl} B. P. Duggal and V. M\"{u}ller, \textit{Tensor product of left n-invertible operators}, STUD. MATH. 215, (2), (2013).
\bibitem{x4.2.0} J. W. Helton, \textit{Jordan operators in infinite dimensions and Surm Liouville conjugate point theory},
Bul. Amer. Math. Soc {\small\bf 78} (1972), no 1, 57-61. \bibitem{x4.2} J. W. Helton, \textit{Operators with a representation as multiplication by $x$ on a Sobolev space}, Colloquia Math. Soc. Janos Bolyai {\small\bf 5}, Hilbert Space Operators, Tihany, Hungary (1970), 279-287. \bibitem{sjn} S. Jung, Y. Kim, E. Ko and J. E. Lee, \textit{On $(A,m)$-expansive operators}, Stud. Math 213 (1) (2012). \bibitem{a.1} N. Jeridi and R. Rabaoui, \textit{On $(A, m)$-Symmetric Operators in a Hilbert Space}, Results Math (2019)
 74: 124. https://doi.org/10.1007/s00025-019-1049-0. \bibitem{r01} Y. Kim, E. Ko, and J. Lee, \textit{Operators with the single valued extension property}, Bull.
Korean Math. Soc. {\small\bf 43} (2006), no. 3, 509-517.
\bibitem{Le} T. Le, \textit{Decomposing algebraic $m$-isometric tuples}, Journal of Functional Analysis. Volume 278, Issue 8, 1 May 2020, 108424. \bibitem{19} Scott A. McCullough and Leiba Rodman, \textit{Hereditary classes of operators and matrices}, Amer. Math. Monthly {\small\bf 104} (1997), no. 5, 415-430. \bibitem{X28} R. Rabaoui and A. Saddi, \textit{On the Orbit of an $A$-$m$-isometry}, Annales Mathematicae Silesianae, 26 (2012), 75-91. \bibitem{mark1} M. Stankus, \textit{Isosymmetric linear transformations on complex Hilbert Space}, Thesis (Ph.D.) University of California, San Diego. Proquest LLC, Ann Arbor, MI, 1993. 80 pp. \bibitem{mark} M. Stankus, $m$-Isometries, \textit{$n$-Symmetries and Other Linear Transformations Which are Hereditary Roots}, Integral Equations and Operator Theory 75(3) (2013), DOI: 10.1007/s00020-012-2026-0. \bibitem{X27} O. A. M. Sid Ahmed, A. Saddi, \textit{$A$-$m$-Isometric operators in semi-Hilbertian spaces}, Linear Algebra and its Applications,
 Vol 436, Issue 10 (2012), 3930-3942. \bibitem{b1} A. Tokmakoff, \textit{Exponential operators, Time-Dependent Quantum Mechanics and Spectroscopy}, Reference material, Univ. of
Chicago, https://tdqms.uchicago.edu/
\end{thebibliography}
\end{document}